\newcommand{\pd}{\partial}
\newcommand{\eps}{\varepsilon}
\newcommand{\Lap}{\Delta}
\newcommand{\bu}{\bm{u}}
\newcommand{\bv}{\bm{v}}
\newcommand{\E}{\mathcal{E}}
\newcommand{\Eu}{\mathcal{E}(\bu)}
\newcommand{\CC}{\mathbb{C}}
\renewcommand{\div}{\mathrm{div}\,}
\newcommand{\R}{\mathbb{R}}
\newcommand{\pdnu}{\pd_{\bm{n}}}
\newcommand{\inn}[2]{\langle #1, #2 \rangle}
\newcommand{\dxt}{\, \mathrm{dx} \mathrm{dt}}
\newcommand{\dx}{\, \mathrm{dx}}
\newcommand{\dt}{\, \mathrm{dt}}
\newcommand{\vp}{\varphi}
\newcommand{\Th}{\mathcal{T}_h}
\newcommand{\Sh}{\mathcal{S}_h}
\newcommand{\Ih}{\mathrm{I}_h}
\newcommand{\bS}{\mathbb{S}}
\newcommand{\bM}{\mathbb{M}}
\newcommand{\bP}{\mathbb{P}}
\newcommand{\bX}{\mathbb{X}}
\theoremstyle{plain}
\newtheorem{thm}{Theorem}[section]
\newtheorem{lem}{Lemma}[section]
\newtheorem{prop}{Proposition}[section]
\newtheorem{remark}{Remark}[section]
\numberwithin{equation}{section}
\title{Numerical analysis of a FE/SAV scheme for a Caginalp phase field model with mechanical effects in stereolithography}
\author{Xingguang Jin \footnotemark[1] \and Kei Fong Lam \footnotemark[2] \and Changqing Ye \footnotemark[1]}
\date{ }
\begin{document}
\maketitle

\renewcommand{\thefootnote}{\fnsymbol{footnote}}
\footnotetext[1]{Department of Mathematics, The Chinese University of Hong Kong, Shatin, Hong Kong ({\tt $\{$xgjin,cqye$\}$@math.cuhk.edu.hk}).}
\footnotetext[2]{Department of Mathematics, Hong Kong Baptist University, Kowloon Tong, Hong Kong ({\tt akflam@hkbu.edu.hk}).}

\begin{abstract} 
In this work we propose a phase field model based on a Caginalp system with mechanical effects to study the underlying physical and chemical processes behind stereolithography, which is an additive manufacturing (3D printing) technique that builds objects in a layer-by-layer fashion by using an ultraviolet laser to solidify liquid polymer resins. Existence of weak solutions is established by demonstrating the convergence of a numerical scheme based on a first order scalar auxiliary variable temporal discretization and a finite element spatial discretization. We further establish uniqueness and regularity of solutions, as well as optimal error estimates for the Caginalp system that are supported by numerical simulations. We also present some qualitative two-dimensional simulations of the stereolithography processes captured by the model.
\end{abstract}

\noindent {\bf Keywords:} Stereolithography, 3D printing, scalar auxiliary variable, Caginalp phase field system, finite element discretization, convergence analysis, error analysis

\vskip3mm
\noindent {\bf AMS (MOS) Subject Classification:} 35K35, 35K58, 65M12, 65M15, 74B05

\section{Introduction}
In this work we study the following system of equations posed on a bounded domain $\Omega \subset \R^{d}$, $d \in \{2,3\}$, with boundary $\pd \Omega$ and for a fixed terminal time $T > 0$:
\begin{subequations}\label{model}
\begin{alignat}{3}
\alpha \pd_t \vp &= \lambda \eps \Lap \vp - \lambda \tfrac{1}{\eps} W'(\vp) - \gamma(\theta - \theta_c) p(\vp) && \text{ in } \Omega_T := \Omega \times (0,T), \label{model:1} \\
\delta \pd_t \theta &= \gamma p(\vp) \pd_t \vp + \Lap \theta && \text{ in } \Omega_T, \label{model:2} \\
\bm{0} & = \div ( \CC(\vp)(\Eu - \E_c(\vp) + \beta (\theta - \theta_0) \mathbb{I})) && \text{ in } \Omega_T, \label{model:3} \\
\bu & = \bm{0}, \quad \pdnu \vp = \pdnu \theta = 0 && \text{ on } \pd \Omega \times (0,T), \label{model:bc} \\
\vp(0,\cdot) &= \vp_0(\cdot), \quad \theta(0,\cdot) = \theta_0(\cdot) && \text{ in } \Omega.\label{model:ic}
\end{alignat}
\end{subequations}
In the above the primary variables of the model are $\vp$ (the phase field variable), $\theta$ (the temperature) and $\bu$ (the elastic displacement). With $\bm{n}$ as the outer unit normal of $\pd \Omega$ we denote by $\pdnu$ the outward normal derivative, i.e., $\pdnu f = \nabla f \cdot \bm{n}$. The parameters $\alpha$, $ \beta$, $\gamma$, $\delta$, $\eps$, $\lambda$ and $\theta_c$ are fixed positive constants, and the model \eqref{model} consists of a phase field system for $(\vp, \theta)$ that resembles the Caginalp model \cite{Cag}, coupling with a quasi-static linearized elasticity system. In \eqref{model:1}, $W'$ is the derivative of a double well potential $W$, while $p = P'$ is the derivative of a non-negative and bounded function $P$.  In the setting where $\gamma = 0$, $\alpha = \eps$ and $\lambda = 1$ this reduces to the familiar Allen--Cahn equation:
\[
\eps \pd_t \vp = \eps \Delta \vp - \frac{1}{\eps} W'(\vp),
\]
while in the setting where $p(\vp) = 1$, $\theta_c = \alpha = \delta = 0$ and $\gamma = \lambda = 1$, we obtain the Cahn--Hilliard equation from \eqref{model:1}-\eqref{model:2} with $-\theta$ playing the role of the associated chemical potential for $\vp$:
\[
\pd_t \vp = - \Delta \theta, \quad - \theta = -  \eps \Delta \vp +  \frac{1}{\eps} W'(\vp).
\]
The classical example for $W$ is the quartic potential $W(s) = \frac{1}{4}(s^2-1)^2$, so that $W'(s) = s^3 - s$. In \eqref{model:3}, the quantity $\Eu := \frac{1}{2}(\nabla \bu + (\nabla \bu)^{\top})$ is the symmetric strain tensor, $\E_c(\vp)$ is an eigenstrain, $\mathbb{I}$ is the second order identity tensor, and $\CC(\vp)$ is a symmetric and positive definite fourth order elasticity tensor depending on $\vp$. We furnish \eqref{model:1}-\eqref{model:3} with the boundary conditions \eqref{model:bc} (homogeneous Neumann for $\vp$ and $\theta$, and homogeneous Dirichlet for $\bu$) and initial conditions \eqref{model:ic}. 

We propose the system \eqref{model} as a phenomenological description for the physical processes behind stereolithography, which is an additive manufacturing (also colloquially known as 3D printing) technique that utilizes ultraviolet lasers to cure/solidify photosensitive liquid polymer resin in order to build objects and products in a layer-by-layer fashion. Despite being one of the earliest forms of additive manufacturing, stereolithography still remains a popular choice among modern practitioners to fabricate complex geometric shapes in an inexpensive, rapid and scalable fashion. However, much of the technological expertise and operating procedures are based on empirical experiments and work experience, while the understanding of the underlying physical and chemical changes behind the curing process remained incomplete. These mechanisms are instrumental in improving the product quality and printing precision in order to address some of the challenges preventing additive manufacturing as a whole into integrating with existing manufacturing infrastructures, see e.g.~the review article \cite{Abdu} for more details.

In the literature there have been several contributions on the development of mathematical models for stereolithography, all of which have the common feature that decompose the physical and chemical processes involved into multiple submodels that are consecutively coupled. We provide a derivation of our model \eqref{model} in Section~\ref{sec:model} and a comparison with several previous approaches. Our proposal to use a phase field model \eqref{model:1}-\eqref{model:2} to encode the evolution of the curing process, similarly done in \cite{Zhou}, is motivated from viewing the photopolymerization
of the liquid polymer akin to that of solidification in material sciences. Indeed, the liquid polymers polymerized into a solid state only when a critical temperature is exceeded, and mathematically this can be captured with models such as the classical Stefan problem employing a sharp interface free boundary description, or by the Caginalp model (and by close association also the Allen--Cahn and Cahn--Hilliard equations) employing a diffuse interface description. Included in \eqref{model:1} is a phenomenological term $\gamma (\theta - \theta_c) p(\vp)$ that allows for a mechanism to determine the energetically favorable phase based on the value of the temperature. We then coupled \eqref{model:1}-\eqref{model:2} to a quasi-static linear elastic system \eqref{model:3} to model the build up of mechanical properties of the cured polymers.

The main contribution of this work is the proposal and analysis of a fully discrete numerical scheme for \eqref{model} based on finite element (FE) spatial discretization and the recently popularized scalar auxiliary variable (SAV) time discretization approach \cite{SAV1,SAV2}. In order to achieve unconditional numerical stability in the presence of the nonlinear term $W'(\vp)$ in \eqref{model:1}, various approaches have been proposed by many authors, among which we mention the convex-concave splitting approach \cite{EllStu,Eyre} (resulting in nonlinear discrete systems), the stabilized linearly implicit approach \cite{ShenYang} (requiring large stabilization parameters and modified potential function), the Lagrange multiplier approach \cite{IEQ}, the invariant energy quadratization approach \cite{IEQ:diblock} and the scalar auxiliary variable approach \cite{SAV1,SAV2}. These aforementioned methods are applicable to a large class of equations arising as gradient flows of appropriate energy functionals, and we choose the SAV approach primarily due to its implementational advantage where advancing to the next time iteration for \eqref{model:1}-\eqref{model:2} requires only solving linear systems rather than employing complicated Newton iterations.

Our numerical analysis demonstrates the convergence of discrete solutions to a weak solution of \eqref{model} as the discretization parameters tend to zero. We additionally establish uniqueness and regularity of the continuous solution, thereby providing a well-posedness result for our proposed model. This is in contrast to previous works for stereolithography where the well-posedness of the models were not addressed. As a consequence, for the Caginalp submodel we can derive optimal error estimates
for our numerical scheme without assuming additional smoothness of the exact solutions. For related works on the numerical analysis of the SAV approach applied to Allen--Cahn and Cahn--Hilliard type systems we point to \cite{LamWangSAV,MetzgerSAV,SAVconv} for convergence of discrete solutions to weak solutions, and to \cite{ChenSAV,GSAV:err,Li:CHS,SAVconv,Zheng:CHHS,Zhang} for error estimates under additional smoothness of the exact solutions.

The rest of this paper is organized as follows: in Section \ref{sec:model} we provide a derivation of \eqref{model} and give a comparison with existing models in the literature proposed for stereolithography. The fully discrete numerical scheme is introduced in Section \ref{sec:numer}, where we present stability estimates and discuss the convergence of discrete solution to a weak solution as the discretization parameters tend to zero. In Section \ref{sec:wellposed} we establish the well-posedness of the model \eqref{model} and improve the regularity of weak solutions, so that in Section \ref{sec:error} we carry out an error analysis to derive error estimates in terms of the discretization parameters for the Caginalp submodel. Supporting numerical simulations are displayed in Section \ref{sec:simulation}.

\section{Model derivation}\label{sec:model}
Consider a bounded domain $\Omega \subset \R^3$ containing a partially constructed object positioned on a platform surrounded by a viscous liquid resin.  The object is made of the same type of material as the resin, but is fully cured (solidified).  An ultraviolet laser positioned outside the domain $\Omega$ serves to trace out the design of the printed object layer-by-layer. The energy from the laser induces a polymerization reaction that causes the exposed liquid resin to transition to a cured phase. After a layer has been created on top of the existing object, the platform which it rests on is then lowered, and a re-coating blade moves across the surface to level the viscous resin covering the newly cured layer.  The laser then traces out the next layer and the process repeats until the final layer is built.  The schematics is summarized in Figure~\ref{fig:stereo} and more details can be found in \cite{Bartolo}.

\begin{figure}[h]
\centering
\includegraphics[width=0.8\textwidth]{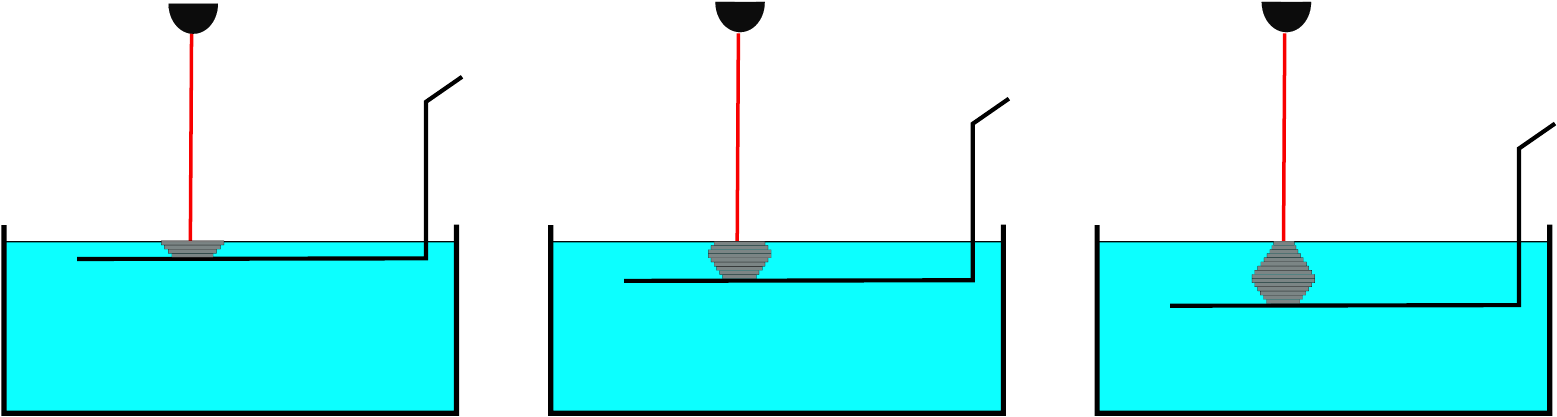}
\caption{Section view of the printing process involved in stereolithography. An object is printed layer by layer on an adjustable platform submerged in a vat of liquid resin. The resin hardens when struck by an ultraviolet laser positioned outside the vat, and the platform lowers in order to harden the next layer of resin directly on top of the previous one.}
\label{fig:stereo}
\end{figure}

The multiphysical nature of stereolithography has led to a modeling approach coupling individual physical submodels for each of the following four processes: (i) the irradiation and absorption of the laser energy; (ii) the conversion of liquid monomers to solid polymers via polymerization; (iii) the propagation of heat from polymerization; and (iv) the build up of mechanical properties during the curing process. 

\subsection{Laser irradiation}
As the laser is positioned outside the domain $\Omega$, it is sufficient to focus on the attenuation (gradual reduction of intensity) inside the domain. Many models have been proposed, see e.g.~\cite{Bartolo,Classens,Goodner,Jacobs,Perry,Schmocker,Watts} and the references cited therein. While this part is not the main focus of the present work, for convenience let us provide the simplest description.  We use the notation that the top layer of the three-dimensional domain where the laser hits the resin is $\{ z = 0\}$. Assuming a Gaussian beam profile and using the Beer--Lambert law for the laser attenuation, the laser intensity $I$ can be modeled as
\[
I(\bm{x},t) = I_0(t) \exp \Big ( \frac{-(x^2 + y^2)}{w_0^2} \Big ) \exp \Big ( \frac{z}{D_p} \Big ) \quad \text{ for } \bm{x} = (x,y,z) \in \Omega, \, t \geq 0, 
\]
with incident laser intensity $I_0(t)$ as a function of time, beam radius $w_0$ and penetration depth $D_p>0$. Refraction effects can be neglected as the distance between the resin surface $\{z = 0\}$ and the submerged object is typically small. Further extensions can be found in \cite[Chap.~8.2]{Bartolo} for refraction effects, in \cite{Westbeek} for scattering and diffraction effects, and in \cite{Brighenti} for diffusion and absorption effects. In this work we assume the laser intensity $I$ is a prescribed function and enters into the next submodel via a heat source.

\subsection{Phase field model for polymerization and heat propagation}
In many works, a full reaction kinetic description employing systems of coupled ordinary differential equations or reaction-diffusion equations has been used for the photopolymerization of monomers in Part (ii), see e.g.~\cite{Bartolo,Goodner,Lange,Watts,Westbeek}, in order to account for the evolution of the concentrations of different monomers and radicals (molecules with at least one unpaired valence electron) during the polymerization process. The key variable that quantifies the amount of monomer conversion to polymer is the {\it degree of conversion} \cite{Watts}, defined as the relative difference between the initial and current concentrations of monomers. However, the complexity of these types of model increases with the number of distinct monomer and radical species, and from a measurement viewpoint, accurately quantifying individual monomer and radical concentrations can be difficult in experimental settings.

Hence, we follow \cite{Classens,Zhou} and propose a phenomenological model that encodes the degree of conversion without accessing individual species concentrations. The degree of conversion and the status of polymerization can be implicitly summarized by the location of the interfaces between the liquid resin (sol phase) and the cured resin (gel phase), whose evolution can be tracked and measured in more accessible ways \cite{Schmocker}. 

In Part (iii), heat propagation during the polymerization process is often modeled using a heat equation with source term depending on the time evolution of the degree of conversion \cite{Classens,Goodner,Westbeek}.  We take a slightly different approach by combining Parts (ii) and (iii) into one coupled model, where the primary variables are the phase field variable $\vp$ representing the degree of conversion and a temperature variable $\theta$. We propose an energy functional of the form
\[
\mathbb{E}(\vp, \theta) :=  \int_\Omega \frac{\lambda \eps}{2} |\nabla \vp|^2 + \frac{\lambda}{\eps} W(\vp)  - \frac{\delta}{2}|\theta|^2 + \gamma P(\vp)(\theta - \theta_c) \dx
\]
where the first two terms constitute the Ginzburg--Landau functional involving a non-negative potential $W$ with two equal minima at $\pm 1$ and a constant surface tension coefficient $\lambda > 0$. We make use of the well-known behavior that for $0 < \eps \ll 1$, minimizers of the Ginzburg--Landau functional attain values close to the stable constant minima $\pm 1$ of $W$ and transition smoothly from one value to the another in thin layers whose thickness scales with $\eps$. Thus, it is expected that the model develops large regions in $\Omega$ where $\vp$ is close to $\pm 1$, which allows us to associate the sol phase as the region where $\{ \vp \sim -1\}$, and the gel phase as the region where $\{\vp \sim 1\}$. In particular, $\vp$ can be interpreted as the difference in the volume fractions of the gel phase and of the sol phase.

In the third term of $E$ the constant $\delta$ corresponds to the specific heat coefficient, and we proposed the fourth term in order to capture the following behavior \cite{Zhou}: The gel phase is energetically preferable when the temperature $\theta$ exceeds a constant critical temperature $\theta_c$, while the sol phase is preferred when $\theta < \theta_c$. Together with a function $P:[-1,1] \to [0,\infty)$ that has a maximum at $s = -1$ and a minimum at $s = 1$, we see that this behavior is captured when we try to minimize the fourth term in $E$. Two examples satisfying the requirements are $P(s) = \frac{1}{2}(1-s)$ and $P(s) = \frac{1}{4}(1-s)^2(2+s)$ for $s \in [-1,1]$ with constant extensions $P(s) = 1$ for $s < -1$ and $P(s) = 0$ for $s > 1$. Furthermore, the constant $\gamma$ corresponds to the latent heat coefficient.

Based on the energy $E$, the evolution of $\vp$ can be obtained with a non-conserving gradient flow:
\[
\alpha \pd_t \vp = - \frac{\delta E}{\delta \vp} = \lambda \eps \Delta \vp - \frac{\lambda}{\eps} W'(\vp) - \gamma(\theta - \theta_c) p(\vp),
\]
where $\alpha > 0$ and $p(s) = P'(s)$. For the temperature, we assume an isobaric (constant pressure) process, and the change in the enthalpy $H := - \frac{\delta E}{\delta \theta} = \delta \theta - \gamma P(\vp)$ is captured by the balance law
\[
\pd_t H = \div \bm{q} + f
\]
with thermal flux $\bm{q}$ and heat source $f$.  We take Fourier's law $\bm{q} = - \nabla \theta$ which leads to
\[
\delta \pd_t \theta - \gamma p(\vp) \pd_t \vp = \Delta \theta + f.
\]
The term $p(\vp) \pd_t \vp$ can be interpreted as the curing rate \cite{Classens}.  This leads to the submodel \eqref{model:1}-\eqref{model:2}, which is also known in the phase field literature as the Caginalp model \cite{Cag} when $p(s) = 1$. Note that we can incorporate the laser intensity from Part (i) into the heat source $f$ as a prescribed function, see e.g.~\cite{Westbeek}. For the subsequent mathematical analysis we set $f = 0$ for simplicity.

\subsection{Mechanical effects}
For Part (iv), it is assumed that mechanical properties only develop for the gel phase, and mechanical stresses do not influence the polymerization process in Parts (ii) and (iii), see also Remark \ref{rem:CahnLarche} below. However, in order to obtain a non-degenerate system of equations amenable to further analysis, we make use of the ersatz material approach in phase field-based structural topology optimization, see e.g.~\cite{Blank,Bourdin,Takezawa,Wang}, which treats the sol phase as a very soft elastic material, so that we can define a displacement vector $\bu$ over the entirety of $\Omega$. 

Similar to \cite{Classens,Westbeek,Zhou} we decompose the total strain tensor $\bm{e}$ into a sum of an infinitesimal linear elastic strain $\Eu = \frac{1}{2}(\nabla \bu + (\nabla \bu)^{\top})$, a thermal strain $\E_\theta := \beta(\theta - \theta_0) \mathbb{I}$ with identity tensor $\mathbb{I}$, initial temperature $\theta_0$ and thermal expansion coefficient $\beta> 0$, and an induced chemical strain $\E_c$ arising from the increase in density due to polymerization that results in shrinkage of the cured resin counteracting the thermal expansion \cite{Bartolo,Classens,Westbeek,Zhou}. This induced strain is assumed to be isotropic and only occurs in the gel phase \cite{Westbeek}. One example we can take is $\E_c(\vp) = \zeta (1-P(\vp)) \mathbb{I} =: m(\vp) \mathbb{I}$, where $\zeta > 0$ is a scalar corresponding to the maximum shrinkage strain, so that $\E_c(-1) = \bm{0}$. 

As both the sol and gel phase are modeled elastic materials, let $\CC^{(0)}$ and $\CC^{(1)}$ denote their corresponding constant elasticity tensors that are positive definite on symmetric matrices and fulfil the usual symmetric conditions of linear elasticity (see \eqref{ass:CC} below), respectively. We introduce a interpolation fourth order elasticity tensor $\CC(\vp) = (1-k(\vp)) \CC^{(0)} + k(\vp) \CC^{(1)}$ for some function $k:\R \to [0,1]$ satisfying $k(1) = 1$ and $k(-1) = 0$. Then, the balance of linear momentum in the absence of external body forces yields
\[
\div (\CC(\vp) (\Eu - \E_c(\vp) + \beta(\theta - \theta_0) \mathbb{I})) = \bm{0}.
\]
Combining the three derived equations for $(\vp, \theta, \bu)$ and furnishing initial-boundary conditions leads to our phase field model \eqref{model}.

Let us provide two examples of $\CC^{(1)}$ for applications relevant to additive manufacturing. Assuming gel phases is an isotropic linearly elastic material, then $\CC^{(1)}$ takes the form
\begin{align*}
\CC^{(1)}_{ijmn} & = \lambda \delta_{ij} \delta_{mn} + \mu (\delta_{im} \delta_{jn} + \delta_{in} \delta_{jm}) \\
& = \frac{E\nu}{(1+\nu)(1-2\nu)}\delta_{ij} \delta_{mn} + \frac{E}{2(1+\nu)}(\delta_{im} \delta_{jn} + \delta_{in} \delta_{jm})
\end{align*}
with gel phase Lam\`e constants $\lambda$ and $\mu$, Young's modulus $E$ and Poisson ratio $\nu$ that are related via the relations
\[
\lambda = \frac{E\nu}{(1+\nu)(1-2\nu)}, \quad \mu = \frac{E}{2(1+\nu)}, \quad E = \frac{\mu(3\lambda + 2 \mu)}{\lambda + \mu}, \quad \nu = \frac{\lambda}{2(\lambda + \mu)}.
\]
For the sol phase that is treated as an ersatz material, we fix $0 < \kappa \ll 1$ and consider the sol phase Lam\`e constants to be $\kappa \lambda$ and $\kappa \mu$, i.e., $\CC^{(0)} = \kappa \CC^{(1)}$, and this leads to an interpolation elasticity tensor of the form 
\begin{align}\label{linear:elas}
\CC(\vp)_{ijmn} = (\kappa \lambda + k(\vp)(1-\kappa) \lambda) \delta_{ij} \delta_{mn} + (\kappa \mu + k(\vp)(1-\kappa) \mu)(\delta_{im} \delta_{jn} + \delta_{in} \delta_{jm}).
\end{align}
Assuming as in \cite{Classens} that the Poisson ratio $\nu$ is constant throughout polymerization, i.e., the Poisson ratios of both sol and gel phases are equal, while setting the Young's modulus of the sol phase as $\kappa E$, then an alternative interpolation elasticity tensor to \eqref{linear:elas} is
\begin{align}\label{linear:elas:Young}
\CC(\vp)_{ijmn} = \frac{\nu(\kappa E + k(\vp)(1-\kappa) E)}{(1+\nu)(1-2 \nu)} \delta_{ij} \delta_{mn} + \frac{\kappa E + k(\vp)(1-\kappa) E}{2(1+\nu)} (\delta_{im} \delta_{jn} + \delta_{in} \delta_{jm}).
\end{align}

On the other hand, it is well known that objects built by additive manufacturing techniques exhibit anisotropic material properties due to the layer by layer printing process, which can be described by modeling both sol and gel phases as orthotropic materials \cite{Biswas,Huang}. In three spatial dimensions, the elasticity tensor of such materials has 9 independent components due to having three mutually orthogonal planes of reflection symmetry. 

For orthotropic materials, it is more common to express the compliance tensor (inverse of elasticity tensor) in terms of materials parameters. Let $\{E_i\}_{i=1}^3$ denote the Young's moduli in the three principal directions, $\{\nu_{ij}\}_{i,j = 1, \, i \neq j}^3$ denote the Poisson's ratio characterizing the transverse strain in the $j$th direction when the material is stressed in the $i$th direction, and $\{G_{ij}\}_{i,j=1, \, i < j}^3$ denote the shear modulus characterizing the ratio between the shear stress in the $i$th direction and the shear strain in the $j$th direction. Then, the fourth order compliance tensor $[\CC^{(1)}]^{-1}$ of an orthotropic material expressed in Voigt notation reads as 
%%%see also https://www.efunda.com/formulae/solid_mechanics/mat_mechanics/hooke_orthotropic.cfm
%%% https://doc.comsol.com/5.5/doc/com.comsol.help.sme/sme_ug_theory.06.23.html
\[
[\CC^{(1)}]^{-1} = \begin{pmatrix} \frac{1}{E_1} & - \frac{\nu_{21}}{E_2} & - \frac{\nu_{31}}{E_3} & 0 & 0 & 0 \\
-\frac{\nu_{12}}{E_1} & \frac{1}{E_2} & - \frac{\nu_{32}}{E_3} & 0 & 0 & 0 \\
-\frac{\nu_{13}}{E_1} & -\frac{\nu_{23}}{E_2} & \frac{1}{E_3} & 0 & 0 & 0 \\
0 & 0 & 0 & \frac{1}{2 G_{23}} & 0 & 0 \\
0 & 0 & 0 & 0 & \frac{1}{2 G_{13}} & 0 \\
0 & 0 & 0 & 0 & 0 & \frac{1}{2G_{12}} \end{pmatrix},
\]
where by symmetry requirements, it holds that 
\[
\frac{\nu_{21}}{E_2} = \frac{\nu_{12}}{E_1}, \quad \frac{\nu_{31}}{E_3} = \frac{\nu_{13}}{E_1}, \quad \frac{\nu_{32}}{E_3} = \frac{\nu_{23}}{E_2}.
\]
Taking $\CC^{(0)} = \kappa \CC^{(1)}$ with $0 < \kappa \ll 1$ for the sol phase, we can consider an interpolation elasticity tensor for orthotropic materials of the form 
\[
\CC(\vp) = [\kappa + k(\vp) (1-\kappa) ] \CC^{(1)}.
\]
\subsection{Boundary conditions}
From the schematics in Figure~\ref{fig:stereo}, we can assume that in a typical set-up the printed object (gel phase) is completely submerged in the liquid resin (sol phase). Hence, appropriate boundary conditions for $\vp$ can be the homogeneous Neumann condition $\pdnu \vp = 0$ or the Dirichlet boundary condition $\vp = -1$.  For the temperature we can prescribe homogeneous Neumann condition $\pdnu \theta = 0$ to describe thermal isolation of the construction environment \cite{Westbeek}. Alternatively, a Robin boundary condition $\pdnu \theta = \theta_\infty - \theta$ with ambient temperature $\theta_\infty$ is suitable, see \cite{Classens,Zhou}.  From the set-up in Figure~\ref{fig:stereo} and on account of the ersatz material approximation of the sol phase, an appropriate boundary condition for the displacement is a homogeneous Dirichlet condition $\bu = \bm{0}$. Another option is to consider a mixed boundary condition where the boundary $\pd \Omega$ is decomposed into a disjoint union of relatively open subsets $\Gamma_0 \cup \Gamma_1$ where traction-free conditions $\CC(\vp) (\Eu - \mathcal{E}_c(\vp) + \beta(\theta - \theta_0) \mathbb{I}) \bm{n} = \bm{0}$ are imposed on $\Gamma_1$, while $\bu = \bm{0}$ is imposed on $\Gamma_0$.

\subsection{Comparison with similar phenomenological models}
In this section we provide a comparison between \eqref{model} and similar phenomenological models with mechanical effects.  Let us comment that in choosing $\vp$ to be the difference in volume fractions of the gel and sol phases means that $\vp$ should belong to the physically relevant interval $[-1,1]$. Hence, in our model we emphasize the values of the interpolating functions $P(\vp)$ and $k(\vp)$ at $\pm 1$ in order to capture the relevant physical effects in the sol phase $\{ \vp \sim -1\}$ and in the gel phase $\{\vp \sim 1\}$.  However, it is also possible to choose $\vp$ as a different physical variable, such as the relative difference between initial and current monomer concentrations, leading to a different physically relevant interval $[0,1]$ and with the sol phase now defined as $\{\vp \sim 0 \}$ and the gel phase remaining as $\{\vp \sim 1\}$.  Mathematically, this only involves a composition of the potential $W$ with a suitable affine linear function so that the new potential has its minima at $0$ and $1$. One example of such a potential is the function $W(s) = s^2(s-1)^2$.

Our closest counterpart is the model of \cite{Zhou}, which also employs a phase field description where $\vp \in [0,1]$ is the volume fraction of the molecules that have undergone the sol-gel transition.  The equation for $\vp$ is of a similar Allen--Cahn type as \eqref{model:1}, which can be obtained by setting $\lambda = \eps$, and replacing $W(\vp)$ with $\frac{k_B}{v_a} \theta [ (1-\vp) \ln(1-\vp) + \chi \vp(1-\vp)]$ with the Boltzmann constant $k_B$, an interaction parameter $\chi$, and the volume of a single monomer/solvent molecule $v_a$, as well as replacing $\gamma (\theta - \theta_c) p(\vp)$ with $6 \vp(1-\vp) (f_{\mathrm{gel}} - f_{\mathrm{sol}})$ with bulk free energy density at the gel phase $f_{\mathrm{gel}}$ and the sol phase $f_{\mathrm{sol}}$, respectively:
\[
\pd_t \vp = \eps^2 \Delta \vp + \tfrac{k_B}{v_a} \theta (1+ \ln(1-\vp) + \chi(2\vp - 1)) - 6 \vp(1-\vp)(f_{\mathrm{gel}} - f_{\mathrm{sol}}).
\]
The main difference with \eqref{model:1} is that the temperature appears in the potential term as a prefactor, while the temperature equation reads as
\[
\rho \delta \pd_t \theta = k \Delta \theta
\]
with heat conductivity coefficient $k$ and mass density $\rho = \frac{\rho_0}{1+\div \bu}$ where $\rho_0$ represents the initial mass density. Notice that there is no explicit terms in $\vp$ appearing in the temperature equation, but the coupling with $\vp$ enters via the displacement $\bu$.  In addition to the decomposition of the total strain into a sum of elastic, thermal and chemical shrinkage strains, the mechanical behavior considered in \cite{Zhou} is described by a rheological model also includes viscoelastic effects, which we have neglected in this work.

In \cite{Westbeek} (see also \cite{Classens} for the one-dimensional analogue), $\vp \in [0,1]$ is chosen as the relative difference between initial and current monomer concentrations, and a phenomenological model is proposed taking the form of the following differential equation
\[
\pd_t \vp = r f(\vp) I^b,
\]
with rate constant $r$ that can depend on temperature $\theta$, reaction kinetics model function $f$ (linear in \cite{Westbeek} and $m$th order polynomial in \cite{Classens}), and laser intensity $I$ with exponent $b \in [0.5,1]$, see \cite[Chap.~9.3]{Bartolo}. Notice that a similar type of equation can be obtained from \eqref{model:1} by setting $\lambda = 0$ and choosing $p(s)$ appropriately.  The temperature equation adapted from \cite{Goodner} reads as
\[
\rho \delta \pd_t \theta = \div (k \nabla \theta) - (\Delta H) \pd_t \vp
\]
with constant mass density $\rho$, thermal conductivity $k$ and heat of polymerization $\Delta H$, which we note the resemblance to \eqref{model:2}.  For the mechanical response, a linear elastic behavior is assumed employing a similar decomposition of the total strain into a sum of  elastic, thermal and chemical components. A $\vp$-dependent Young's modulus $E(\vp)$ is prescribed:
\[
E(\vp) = \begin{cases}
e_0 E_{\mathrm{pol}} & \text{ for } \vp < \vp_{\mathrm{gel}}, \\
\Big ( \frac{1-e_0}{1- \vp_{\mathrm{gel}}} (\vp - \vp_{\mathrm{gel}}) \Big ) E_{\mathrm{pol}} & \text{ for } \vp \geq \vp_{\mathrm{gel}},
\end{cases}
\]
for constants $0 < e_0 \ll 1$, Young modulus of the gel phase $E_{\mathrm{pol}}$, and gel-point $\vp_{\mathrm{gel}}$ for conversion defined as the point where the liquid resin transforms to a solid polymer. Then, with a constant Poisson's ratio $\nu$, the phase dependent Lam\`e constants
\[
\lambda(\vp) = \frac{E(\vp)}{(1+\nu)(1-2\nu)}, \quad \mu(\vp) = \frac{E(\vp)}{2(1+\nu)},
\]
enter into an elasticity tensor $\CC(\vp)$ of the form \eqref{linear:elas} and the resulting momentum balance equation resembles \eqref{model:3} and \eqref{linear:elas:Young}.

\begin{remark}\label{rem:CahnLarche}
We note that \eqref{model} somewhat resembles the Cahn--Larch\'e model \cite{CahnLarche,Onuki} (where we use the notation $\mu$ to denote the associated chemical potential)
\begin{align*}
\pd_t \vp & = \Delta \mu, \\
\mu & = - \eps \Delta \vp + \eps^{-1} W'(\vp) \\
& \quad + \tfrac{1}{2}(\Eu - \E_c(\vp)): \CC'(\vp)(\Eu - \E_c(\vp)) - \CC(\vp)(\Eu - \E_c(\vp)): \E_c'(\vp), \\
\bm{0} & = \div(\CC(\vp)(\Eu - \E_c(\vp))),
\end{align*}
with the associated energy functional
\[
\mathbb{E}(\vp, \bu) := \int_\Omega \frac{\eps}{2} |\nabla \vp|^2 + \frac{1}{\eps} W(\vp) + \frac{1}{2}(\Eu - \E_c(\vp)) : \CC(\vp) (\Eu - \E_c(\vp)) \, dx.
\]
The most significant difference is the inclusion of an elastic contribution in $\mathbb{E}$, leading to an additional term appear in the equation for $\mu$ and thus allowing elastic stress to influence the evolution of $\vp$. 
This elastic influence is not present in \eqref{model} due to how we build the model by connecting submodels in a sequence. To the best of our knowledge, it is not entirely clear if this mechanical feedback is detected in the physical and chemical processes of stereolithography, and further investigations would warrant a comprehensive study in model calibration and validation with experimental data. We mention that a numerical analysis of a finite element scheme for the Cahn--Larch\'e system can be found in \cite{Garcke}.
\end{remark}

\section{Numerical discretization}\label{sec:numer}

\subsection{Preliminaries and assumptions}

\textbf{Notation}: For any $p \in [1,\infty]$ and $k >0$, the standard Lebesgue and Sobolev spaces over $\Omega$ are denoted by $L^p(\Omega)$ and $W^{k,p}(\Omega)$ with the corresponding norms $\|\cdot \|_{L^p}$ and $\|\cdot \|_{W^{k,p}}$.  In the special case $p = 2$, these become Hilbert spaces and we employ the notation $H^k := H^k(\Omega) = W^{k,2}(\Omega)$ with the corresponding norm $\| \cdot \|_{H^k}$. We denote the topological dual of $H^1(\Omega)$ by $(H^1(\Omega))^*$ and the corresponding duality pairing by $\inn{\cdot}{\cdot}$.  We use $\| \cdot \|$ and $(\cdot,\cdot)$ for the norm and inner product of $L^2(\Omega)$. Furthermore we define the Sobolev spaces $H^2_{n}(\Omega) := \{ f \in H^2(\Omega) \, : \, \pdnu f = 0 \text{ on } \pd \Omega \}$ and $X(\Omega) := \{ \bm{f} \in H^1(\Omega, \R^d) \, : \, \bm{f} = \bm{0} \text{ on } \pd \Omega \}$. Then, by \cite[Thm.~6.15-4, pp.~409--410]{Ciarlet} a Korn-type inequality is valid in $X(\Omega)$, where there exists a positive constant $C_K$ such that 
\[
\| \bu \|_{H^1} \leq C_K \| \Eu \| \quad \forall \bu \in X(\Omega).
\]
The discrete Gronwall inequality will often be invoked: if $e_n, a_n, b_n \geq 0$ for all $n \geq 0$, then
\[
e_n \leq a_n + \sum_{i=0}^{n-1} b_i e_i \quad \forall n \geq 0 \quad \implies \quad e_n \leq a_n \cdot \exp \Big (\sum_{i=0}^{n-1} b_i \Big ) \quad \forall n \geq 0.
\]

We make the following assumptions for the model:
\begin{enumerate}[label=$(\mathrm{A \arabic*})$, ref = $\mathrm{A \arabic*}$]
\item \label{ass:dom} $\Omega \subset \R^{d}$, $d \in \{2,3\}$, is a bounded convex domain with polygonal (if $d = 2$) or polyhedral (if $d = 3$) boundary $\pd \Omega$.
\item \label{ass:const:m} The constants $\alpha$, $\beta$, $\gamma$, $\delta$, $\lambda$, $\theta_c$ and $\eps$ are positive and fixed.
\item \label{ass:W} $W: \R \to \R$ is non-negative with $W \in C^2(\R)$, and there exists positive constants $c_0 > 0$, $c_1 \in \R$, $c_2 > 0$ and $c_3 > 0$ such that 
\[
c_0 |s|^3 - c_1 \leq |W'(s)| \leq c_2(1+|s|^3), \quad W''(s) \geq - c_3 \quad \forall s \in \R.
\]
\item \label{ass:p} $P:\R \to \R$ is a non-negative function with $P \in W^{2,\infty}(\R)$.
\item \label{ass:ini} The initial conditions satisfy $\phi_0, \theta_0 \in H^2_n(\Omega)$.
\item \label{ass:CC} The fourth order elasticity tensor $\CC(\vp)$ is of the form
\[
\CC(\vp) = \CC^{(0)} + k(\vp) (\CC^{(1)} - \CC^{(0)}) = (1-k(\vp)) \CC^{(0)} + k(\vp) \CC^{(1)},
\]
with constant fourth order tensors $\CC^{(0)}$ and $\CC^{(1)}$ that are positive definite in the sense that there exists positive constants $c_4$, $c_5$, $c_6$ and $c_7$ such that for all non-zero symmetric matrices $\bm{A} \in \R^{d \times d}_{\mathrm{sym}} \setminus \{ \bm{0} \}$:
\[
c_4 |\bm{A}|^2 \leq \CC^{(0)} \bm{A} : \bm{A} \leq c_5 | \bm{A}|^2, \quad c_6 |\bm{A}|^2 \leq \CC^{(1)} \bm{A} : \bm{A} \leq c_7 | \bm{A}|^2,
\]
where $\bm{A} : \bm{B} = \sum_{i,j=1}^d A_{ij} B_{ij}$, $|\bm{A}| = \sqrt{\bm{A} : \bm{A}}$, and satisfy the usual symmetry conditions of linear elasticity: 
\begin{align*}
\CC_{ijmn} = \CC_{ijnm} = \CC_{jimn}, \quad \CC_{ijmn} = \CC_{mnij}, \quad \forall i,j,m,n \in \{1, \dots, d\},
\end{align*}
while $k : \R \to [0,1]$ satisfies $k \in C^{1,1}(\R)$.
\item \label{ass:Ec} The induced eigenstrain $\E_c : \R \to \R^{d \times d}$ is of the form $\E_c(s) = m(s) \mathbb{I}$ for $m \in W^{1,\infty}(\R)$.
\end{enumerate}

\begin{remark}
The assumption \eqref{ass:ini} is only technical for our consideration of the numerical scheme defined later.  One may consider alternate approaches, such as a Faedo-Galerkin approximation, to show the existence of a solution with regularity stated in Theorem~\ref{thm:conv} under weaker assumptions, such as $\vp_0, \theta_0 \in H^1(\Omega)$.
\end{remark}

\subsection{Scalar auxiliary variable (SAV) formulation}
The weak formulation of \eqref{model} is 
\begin{subequations}\label{model:weak}
\begin{alignat}{2}
0 & = \int_{\Omega_T} \Big ( \alpha \pd_t \vp + \frac{\lambda}{\eps} W'(\vp) + \gamma(\theta - \theta_c) p(\vp) \Big )\psi + \lambda \eps \nabla \vp \cdot \nabla \psi \dxt, \label{original:weak:1} \\
0 & = \int_{\Omega_T} \Big ( \delta \pd_t \theta - \gamma p(\vp) \pd_t \vp \Big ) \psi +  \nabla \theta \cdot \nabla \psi \dxt, \\
0 & = \int_{\Omega_T} \CC(\vp)(\Eu - m(\vp)\mathbb{I} + \beta(\theta - \theta_0) \mathbb{I}) : \E(\bv) \dxt
\end{alignat}
\end{subequations}
holding for all $\psi \in L^2(0,T;H^1(\Omega))$ and $\bv \in L^2(0,T;X(\Omega))$. As $W$ is non-negative, we introduce a scalar auxiliary variable $q$ and a function $Q$ defined as 
\begin{align}\label{defn:Q}
q(t) := \Big ( \frac{1}{\eps}  \int_\Omega W(\vp(t,x)) \dx + 1 \Big )^{1/2}
, \quad Q(\phi) := \Big ( \frac{1}{\eps}  \int_\Omega W(\phi) \dx + 1 \Big )^{1/2},
\end{align}
so that $q(t) = Q(\vp)(t)$ holds. Differentiating in time formally yields an ordinary differential equation
\[
q'(t) = \frac{1}{2 \eps Q(\vp)(t)} \int_\Omega W'(\vp(t,x)) \pd_t \vp(t,x) \dx
\]
furnished with the initial condition $q(0) = Q(\vp_0)$.  Then, an equivalent weak formulation for \eqref{model} based on the scalar auxiliary variable approach of \cite{SAV1,SAV2} is
\begin{subequations}\label{model:weak:SAV}
\begin{alignat}{2}
0 & = \int_{\Omega} \Big ( \alpha \pd_t \vp + \frac{\lambda q}{\eps Q(\vp)} W'(\vp) + \gamma (\theta- \theta_c) p(\vp) \Big )\psi + \lambda \eps \nabla \vp \cdot \nabla \psi \dx, \label{weak:SAV:1} \\
0 & = \int_{\Omega} \Big ( \delta \pd_t \theta - \gamma p(\vp) \pd_t \vp \Big ) \psi + \nabla \theta \cdot \nabla \psi \dx, \label{weak:SAV:2} \\
0 & = q' - \frac{1}{2 \eps Q(\vp)} \int_\Omega W'(\vp) \pd_t \vp \dx, \label{SAV:q} \\
0 & = \int_{\Omega} \CC(\vp)(\Eu - m(\vp)\mathbb{I} + \beta(\theta - \theta_0) \mathbb{I}) : \E(\bv)  \dx, \label{weak:SAV:3}
\end{alignat}
\end{subequations}
holding for a.e.~$t \in (0,T)$, for all  $\psi \in H^1(\Omega)$ and $\bv \in X(\Omega)$. Our numerical discretization of \eqref{model} is based on \eqref{model:weak:SAV}.

\subsection{Fully discrete finite element approximation}
Dividing the time interval $[0,T]$ into a uniform partition of subintervials $[t^{n-1}, t^n]$ for $n = 1, 2, \dots, N_\tau$ with $\tau = t^n - t^{n-1}$ as the time step and $N_\tau \tau = T$. Let $\{\mathcal{T}_h\}_{h > 0}$ denote a regular family of conformal quasiuniform triangulations that partition $\Omega$ into disjoint open simplices $K$ such that $\max_{K \in \Th} \text{diam}(K) \leq h$.  Let $\Sh$ be the finite element space of continuous and piecewise linear functions:
\[
\Sh := \{ f_h \in C^0(\overline{\Omega}) \, : \, f_h \vert_K \in \mathcal{P}_1(K) \quad  \forall K \in \mathcal{T}_h \} \subset H^1(\Omega),
\]
where $\mathcal{P}_1$ is the set of affine linear functions on $K$. Associated to $\Sh$ is the set of basis functions $\{\chi_{k}^{h}\}_{k=1, \dots, Z_h}$, $Z_h := \text{dim}(\Sh)$, that forms a dual basis to the set of nodes $\{\bm{x}_k\}_{k=1, \dots, Z_h}$.  For the approximation of the elasticity system, we introduce the function space:
\[
\bm{\mathcal{S}}_{h,0} = \{ \bm{f}_h \in (\Sh)^d \, : \, \bm{f}_h = \bm{0} \text{ on } \pd \Omega \}.
\]
We recall the nodal interpolation operator $\Ih: C^0(\overline{\Omega}) \to \Sh$ defined as $\Ih (\eta)(\bm{x}) = \sum_{k=1}^{Z_h} \eta(\bm{x}_k) \chi_{k}^h(\bm{x})$, so that $\Ih (\eta)(\bm{x}_k) = \eta(\bm{x}_k)$ for all nodes $\{\bm{x}_k\}_{k=1, \dots, Z_h}$ of $\Th$. 
%Associated is the lumped mass scalar product $(\cdot,\cdot)^h$ defined as
%\[
%(f_1,f_2)^h := \int_\Omega \Ih(f_1(x) f_2(x)) \dx, \quad |f|_h := \sqrt{(f, f)^h},
%\]
%for any $f_1, f_2 \in C^0(\Omega)$. 

Let us recall some well-known results concerning the nodal interpolation operator: there exist positive constants $c$ and $C$ independent of $h$, such that 
\begin{align}
c \| f_h \|_{L^p(\Omega)} \leq \big ( \int_\Omega \Ih(|f_h|^p) \dx \big )^{1/p} \leq C \| f_h \|_{L^p(\Omega)} \quad & \forall f_h \in \Sh, \, p \in [1,\infty), \label{norm:equiv} \\[1ex]
\| f - \Ih(f) \| + h \| \nabla (f - \Ih(f)) \| \leq C h^2 \| f \|_{H^2} \quad & \forall f \in H^2(\Omega), \label{Interpol} \\[1ex]
\| f - \Ih(f) \|_{L^q} + h \| \nabla (f - \Ih(f)) \|_{L^q} \leq C h \| f \|_{W^{1,q}} \quad & \forall f \in W^{1,q}(\Omega), \, q \in (2,\infty] \label{Interpol2} \\[1ex]
\lim_{h \to 0} \| f - \Ih(f) \|_{L^\infty} = 0 \quad & \forall f \in C^0(\overline{\Omega}). \label{Interpol:Linfty} 
%|(f_h, g_h) - (f_h,g_h)^h| \leq C h \| f_h \| \| g_h \|_{H^1}  \quad & \forall  f_h, g_h \in \Sh. \label{Interpol:diff}
\end{align}
We also note that for any $\eta \in C^0(\R)$ such that $\eta(s) \in [k_0, k_1]$ with constants $k_0 < k_1$ and $s \in \R$, by the definition of the nodal interpolation operator, for any $\bm{x} \in \overline{\Omega}$,
\begin{align}\label{nodal:bdd}
k_0 = k_0 \sum_{k=1}^{Z_h} \chi_{k}^h(\bm{x}) \leq \Ih(\eta)(\bm{x}) = \sum_{k=1}^{Z_h} \eta(\bm{x}_k) \chi_{k}^h(\bm{x}) \leq k_1 \sum_{k=1}^{Z_h} \chi_{k}^h(\bm{x}) = k_1.
\end{align}
Let us recall the discrete Neumann-Laplacian $\Delta_h: \Sh \to \Sh$ for a function $q_h \in \Sh$ as
\begin{align}\label{disc:lap}
(\Delta_h q_h, \zeta_h) := - (\nabla q_h, \nabla \zeta_h) \quad \forall \zeta_h \in \Sh.
\end{align}
Under \eqref{ass:dom} we have from \cite[Lem.~3.1]{BLN} that for any $f_h \in \Sh$,
\begin{align}\label{BLN:est}
\| \nabla f_h \|_{L^r} \leq C \| \Delta_h f_h \| \text{ where } \begin{cases}
r <\infty & \text{ if } d = 2,\\
r = 6 & \text{ if } d = 3. \end{cases}
\end{align}
For the error analysis in Section \ref{sec:error} we will make use of the Ritz projection operator $R_h : H^1(\Omega) \to \Sh$ defined as
\[
(\nabla f - \nabla (R_h f), \nabla \psi_h) = 0 \quad \forall \psi_h \in \Sh
\]
satisfying $\int_\Omega R_h f \dx = \int_\Omega f \dx$. Then, the following properties hold (see e.g.~\cite{Thomee}):
\begin{align}
\| f - R_h f \| + h \| \nabla (f - R_h f) \| & \leq C h^s \| f \|_{H^s}, \label{Ritz1} \\
\|f - R_h f \|_{L^\infty} & \leq C h^s \ell_h \| f \|_{W^{s,\infty}}, \label{Ritz2}
\end{align}
where $\ell_h := \max(1, \log(1/h))$ and $1 \leq s \leq 2$.

Then, our proposed numerical scheme for \eqref{model} reads as follows: For $n = 1, \dots, N_\tau$, given $(\vp^{n-1}_h, \theta^{n-1}_h, q^{n-1}_h ) \in \Sh \times \Sh\times \R$, find $(\vp^{n}_h, \theta^{n}_h, q^{n}_h) \in \Sh \times \Sh \times \R$ such that for all $\psi_h \in \Sh$,
\begin{subequations}\label{SAV}
\begin{alignat}{2}
0 & = \frac{\alpha}{\tau} (\vp^{n}_h - \vp^{n-1}_h, \psi_h) +   \frac{\lambda q^n_h }{\eps Q_h^{n-1}} (W'(\vp^{n-1}_h), \psi_h) + \gamma((\theta^{n}_h - \theta_c) p(\vp^{n-1}_h), \psi_h)   \label{SAV:1} \\
\notag & \quad + \lambda \eps (\nabla \vp^{n}_h, \nabla \psi_h), \\
0 & = \frac{\delta}{\tau} (\theta^n_h - \theta^{n-1}_h, \psi_h) - \frac{\gamma}{\tau}( p(\vp^{n-1}_h) (\vp^n_h - \vp^{n-1}_h), \psi_h) + (\nabla \theta^n_h, \nabla \psi_h), \label{SAV:2} \\
0 & = q^n_h - q^{n-1}_h - \frac{(W'(\vp^{n-1}_h), \vp^{n}_h - \vp^{n-1}_h)}{2 \eps Q^{n-1}_h}, \label{SAV:3}
\end{alignat}
\end{subequations}
where
\begin{align}\label{defn:Qk}
Q^k_h := \Big ( \frac{1}{\eps} \int_\Omega W(\vp^k_h) \dx + 1 \Big)^{1/2}.
\end{align}
This fully discrete finite element scheme is linear with respect to $(\vp^n_h, \theta^n_h, q^n_h)$ and can be initialize with the choices $\vp_h^0 = R_h \vp_0$, $\theta_h^0 = R_h\theta_0$ and $q_h^0 = Q(\vp_h^0)$. Then, we find $\bu^n_h \in \bm{\mathcal{S}}_{h,0}$ such that for all $\bv_h \in \bm{\mathcal{S}}_{h,0}$,
\begin{align}
\label{SAV:u} \Big ( \Ih(\CC(\vp^{n}_h)) \E(\bu^{n}_h) , \E(\bv_h) \Big ) & = \Big ( \Ih(k(\vp^{n}_h) (m(\vp^{n}_h) - \beta(\theta^n_h - \theta^0_h))) \CC^{(1)} \mathbb{I}, \E(\bv_h) \Big ) \\
\notag & \quad + \Big ( \Ih([1-k(\vp^{n}_h)] (m(\vp^{n}_h) - \beta(\theta^n_h - \theta^0_h))) \CC^{(0)} \mathbb{I}, \E(\bv_h) \Big ).
\end{align}

%\alert{CY: \ref{SAV:u} needs to be corrected?} \textcolor{blue}{AL: I don't see anything wrong with (3.15), since $\CC(\vp) = k(\vp) \CC^{(1)} + (1-k(\vp)) \CC^{(0)}$}
%\textcolor{orange}{Ray: The original (3.15) is correct.}
%\[
%\begin{aligned}
%\Big ( \Ih(\CC(\vp^{n}_h)) \E(\bu^{n}_h) , \E(\bv_h) \Big ) & = \Big ( \Ih(1-k(\vp^{n}_h)) (m(\vp^{n}_h) - \beta(\theta^n_h - \theta^0_h))) \CC^{(1)} \mathbb{I}, \E(\bv_h) \Big ) \\
%\notag & \quad + \Big ( \Ih(k(\vp^{n}_h) (m(\vp^{n}_h) - \beta(\theta^n_h - \theta^0_h))) \CC^{(0)} \mathbb{I}, \E(\bv_h) \Big )
%\end{aligned}
%\]

\subsection{Unique solvability}\label{sec:UniqSolv}
\begin{prop}
For any $n = 1, \dots, N_\tau$, given $(\vp^{n-1}_h, \theta^{n-1}_h, q^{n-1} ) \in \Sh \times \Sh \times \R$, there exists a unique quadruple $(\vp^{n}_h, \theta^{n}_h, q^{n}_h, \bu^n_h) \in \Sh \times \Sh \times \R \times \bm{\mathcal{S}}_{h,0}$ satisfying \eqref{SAV}-\eqref{SAV:u}. 
\end{prop}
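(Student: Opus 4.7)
The system \eqref{SAV}--\eqref{SAV:u} is linear and finite-dimensional in $(\vp^n_h,\theta^n_h,q^n_h,\bu^n_h)$, so existence and uniqueness are equivalent to the invertibility of the underlying coefficient matrix, which I would establish by showing coercivity of the associated bilinear forms. The plan exploits the block triangular structure: \eqref{SAV:1}--\eqref{SAV:3} determine $(\vp^n_h,\theta^n_h,q^n_h)$ without reference to $\bu^n_h$, and \eqref{SAV:u} then determines $\bu^n_h$ alone once $\vp^n_h,\theta^n_h$ are in hand.

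For the first block I would reduce to a single variational equation in $\vp^n_h$. Equation \eqref{SAV:3} expresses
\[
q^n_h = q^{n-1}_h + \tfrac{1}{2\eps Q^{n-1}_h}\bigl(W'(\vp^{n-1}_h),\, \vp^n_h - \vp^{n-1}_h\bigr)
\]
as an affine function of $\vp^n_h$ (with $Q^{n-1}_h\ge 1$ by \eqref{defn:Qk}). Introducing the symmetric positive definite operator $A_\theta:\Sh\to\Sh$ given by $(A_\theta f,\psi_h):=\tfrac{\delta}{\tau}(f,\psi_h)+(\nabla f,\nabla\psi_h)$, equation \eqref{SAV:2} inverts to
\[
\theta^n_h = A_\theta^{-1}\bigl(\tfrac{\delta}{\tau}\theta^{n-1}_h + \tfrac{\gamma}{\tau} p(\vp^{n-1}_h)(\vp^n_h - \vp^{n-1}_h)\bigr),
\]
again affine in $\vp^n_h$. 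Substituting both expressions into \eqref{SAV:1} yields a symmetric variational equation $a(\vp^n_h,\psi_h)=F(\psi_h)$ on $\Sh$. The key step, and the one I expect to be the main obstacle, is verifying coercivity of $a$, since the elimination produces non-local contributions that are not manifestly definite. The direct terms from \eqref{SAV:1} give $\tfrac{\alpha}{\tau}\|\psi_h\|^2+\lambda\eps\|\nabla\psi_h\|^2$. The SAV substitution contributes a symmetric rank-one term of the form $\tfrac{\lambda}{2\eps^2(Q^{n-1}_h)^2}(W'(\vp^{n-1}_h),\vp^n_h)(W'(\vp^{n-1}_h),\psi_h)$, which at $\psi_h=\vp^n_h$ collapses to a perfect square; the temperature substitution contributes $\tfrac{\gamma^2}{\tau}\bigl(A_\theta^{-1}(p(\vp^{n-1}_h)\vp^n_h),\,p(\vp^{n-1}_h)\psi_h\bigr)$, which at $\psi_h=\vp^n_h$ is non-negative by symmetric positive definiteness of $A_\theta^{-1}$ (independently of the sign of $p$). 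Both extra contributions thus being non-negative, I obtain $a(\vp^n_h,\vp^n_h)\geq\tfrac{\alpha}{\tau}\|\vp^n_h\|^2$, which gives coercivity and hence a unique $\vp^n_h\in\Sh$; unique values of $\theta^n_h$ and $q^n_h$ then follow directly from the inversions above.

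For \eqref{SAV:u}, with $\vp^n_h$ and $\theta^n_h$ fixed the right-hand side becomes a linear functional on $\bm{\mathcal{S}}_{h,0}$. By \eqref{ass:CC} and $k(s)\in[0,1]$, the convex combination $\CC(s)=(1-k(s))\CC^{(0)}+k(s)\CC^{(1)}$ satisfies the uniform ellipticity bound $\min(c_4,c_6)|\bm{A}|^2\leq\CC(s)\bm{A}:\bm{A}\leq\max(c_5,c_7)|\bm{A}|^2$ for every symmetric $\bm{A}$. Applying \eqref{nodal:bdd} componentwise transfers the same pointwise bounds to $\Ih(\CC(\vp^n_h))$, so $(\bu,\bv)\mapsto(\Ih(\CC(\vp^n_h))\E(\bu),\E(\bv))$ is bounded and coercive on $\bm{\mathcal{S}}_{h,0}\subset X(\Omega)$ via the Korn-type inequality recalled earlier. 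Invertibility of this block is then standard, giving a unique $\bu^n_h$ and completing the proof. Overall, the only non-routine step is the coercivity verification for the reduced $\vp^n_h$-equation, where the SAV rescaling and the positivity of $A_\theta^{-1}$ conspire to preserve non-negativity of the eliminated couplings.
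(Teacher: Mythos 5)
Your proof is correct, but it is organized differently from the paper's. The paper splits the argument: uniqueness is obtained by testing the homogeneous difference system with $(\vp,\theta,2\lambda q)$ and exploiting the cancellation of the $W'$-coupling and the $p(\vp_h^{n-1})$-coupling; existence is then obtained constructively, by writing \eqref{SAV} in matrix--vector form, eliminating $\bm{\theta}_h^n$ and $q_h^n$, and resolving the resulting rank-one perturbation of a positive definite matrix via a Sherman--Morrison-type formula (which doubles as the implementation recipe). You instead eliminate $\theta_h^n$ and $q_h^n$ at the variational level and prove coercivity of the reduced bilinear form on $\Sh$, getting existence and uniqueness simultaneously from injectivity in finite dimensions; the two non-negativity facts you verify (the SAV substitution collapsing to a perfect square at $\psi_h=\vp_h^n$, and positivity of the eliminated temperature coupling through the SPD operator $A_\theta$) are exactly the same structural cancellations the paper uses in its uniqueness step, just repackaged. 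What your route buys is brevity and a single mechanism for both existence and uniqueness; what the paper's route buys is the explicit update formula for $\bm{\vp}_h^n$ that is actually used in the computations (cf.\ the remark following the proposition). One small point of rigor: $p(\vp_h^{n-1})(\vp_h^n-\vp_h^{n-1})$ does not lie in $\Sh$, so $A_\theta^{-1}$ should be understood as applied to its $L^2$-projection onto $\Sh$ (equivalently, to the associated linear functional); with $w:=\Pi_h(p(\vp_h^{n-1})\vp_h^n)\in\Sh$ the eliminated term at $\psi_h=\vp_h^n$ equals $\tfrac{\gamma^2}{\tau}(A_\theta^{-1}w,w)\ge 0$ and your argument goes through unchanged. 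The elasticity block is handled identically to the paper.
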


\begin{proof}
Suppose we have two quadruple of solutions $\{(\vp^{n}_{h,i}, \theta^{n}_{h,i}, q^{n}_{h,i}, \bu^n_{h,i})\}_{i=1,2}$ satisfying \eqref{SAV}-\eqref{SAV:u}.  Denoted their differences by $(\vp, \theta, q, \bu) \in \Sh \times \Sh \times \R \times \bm{\mathcal{S}}_{h,0}$, we see that $(\vp, \theta, q)$ fulfill
\begin{subequations}\label{SAV:diff}
\begin{alignat}{2}
0 & = \frac{\alpha}{\tau}( \vp, \psi_h) + \frac{\lambda q}{Q^{n-1}_h} (W'(\vp^{n-1}_h) , \psi_h) + \gamma (p(\vp_h^{n-1}) \theta, \psi_h ) +  \lambda \eps (\nabla \vp, \nabla \psi_h), \label{diff:1} \\
0 & = \delta(\theta, \psi_h) - \gamma (p(\vp^{n-1}_h) \vp, \psi_h) + \tau (\nabla \theta, \nabla \psi_h), \label{diff:2}  \\
0 & = q - \frac{(W'(\vp^{n-1}_h), \vp)}{2 \eps Q^{n-1}_h}. \label{diff:3}
\end{alignat}
\end{subequations}
Choosing $\psi_h = \vp$ in \eqref{diff:1}, $\psi_h = \theta$ in \eqref{diff:2}, and multiplying \eqref{diff:3} with $2 \lambda q$, then summing the resulting equalities gives
\[
\frac{\alpha}{\tau} \| \vp \|^2 + 2\lambda |q|^2 +  \delta \| \theta \|^2 +\lambda \eps \| \nabla \vp \|^2 + \tau \| \nabla \theta \|^2 = 0.
\]
It is clear that $\vp = \theta = q = 0$, i.e., $\vp^n_{h,1} = \vp^n_{h,2}$, $\theta^n_{h,1} = \theta^n_{h,2}$, and $q^n_{h,1} = q^n_{h,2}$.  Then, the difference $\bu$ satisfies
\[
0 = \Big ( \Ih(\CC(\vp^{n}_h)) \E(\bu) , \E(\bv_h) \Big ).
\]
Choosing $\bv_h = \bu$ yields
\begin{align*}
0  & = \int_\Omega (1-\Ih(k(\vp^n_h))) \CC^{(0)} \Eu : \Eu + \Ih(k(\vp^n_h)) \CC^{(1)} \Eu : \Eu \dx \\
& \geq  \int_\Omega (1-\Ih(k(\vp^n_h))) c_4 |\Eu|^2 + \Ih(k(\vp^n_h)) c_6 |\Eu|^2 \dx \geq \min(c_4, c_6) \| \Eu \|^2,
\end{align*}
where we have used that $0 \leq \Ih(k(\vp^n_h)) \leq 1$ inferred from \eqref{ass:CC} and \eqref{nodal:bdd}. Hence, $\Eu = \bm{0}$ and by Korn's inequality we deduce that $\bm{u} = \bm{0}$, i.e., $\bm{u}_{h,1}^n = \bm{u}_{h,2}^n$.  This gives uniqueness of the fully discrete solutions.

For existence, we express \eqref{SAV} in a matrix-vector form.  Using the basis functions $\{\chi_k^h\}_{k=1, \dots, Z_h}$, we introduce the mass matrix and stiffness matrix
\[
\bM_{i,j} = (\chi_{i}^h, \chi_{j}^h), \quad \bS_{i,j} = (\nabla \chi_{i}^h, \nabla \chi_{j}^h).
\]
With the dual basis of nodes, we introduce 
\begin{align*}
\bm{\vp}_h^n & := \bM^{-1} [(\vp_h^n, \chi_{k}^h)]_{k=1}^{Z_h} = (\vp_h^n(\bm{x}_1), \dots, \vp_h^n(\bm{x}_{Z_h}))^{\top},\\
\bm{\theta}_h^n & := \bM^{-1} [(\theta_h^n, \chi_{k}^h)]_{k=1}^{Z_h} = (\theta_h^n(\bm{x}_1), \dots, \theta_h^n(\bm{x}_{Z_h}))^{\top}, \\
\bm{b}^{n-1} & := \frac{\lambda W'(\bm{\vp}_h^{n-1})}{\eps Q_h^{n-1}}, \quad \bm{c}^{n-1} := \gamma \theta_c \bM^{-1} [ (p(\vp_h^{n-1}), \chi_{k}^h)]_{k=1}^{Z_h}, \quad \bP^{n-1}_{i,j} = (p(\vp_h^{n-1}) \chi_i^h, \chi_j^h),
\end{align*}
where the nonlinearities are applied component-wise. Then, choosing $\psi_h = \chi_{k}^h$ for $k = 1, \dots, Z_h$ in \eqref{SAV} leads to
\begin{subequations}\label{SAV:matrix}
\begin{alignat}{2}
0 & = \alpha \bM (\bm{\vp}_h^n - \bm{\vp}_h^{n-1}) + \tau q^n_h \bM \bm{b}^{n-1} + \tau \gamma \bP^{n-1} \bm{\theta}_h^n - \tau \bm{c}^{n-1}+ \lambda \eps \tau \bS \bm{\vp}_h^n, \label{SAV:matrix:1} \\
0 & = \delta \bM (\bm{\theta}_h^n - \bm{\theta}_h^{n-1}) - \gamma \bP^{n-1}(\bm{\vp}_h^n - \bm{\vp}_h^{n-1}) + \tau \bS \bm{\theta}_h^n, \label{SAV:matrix:2} \\
0 & = q^n_h - q^{n-1}_h -  \tfrac{1}{2} \bM \bm{b}^{n-1} \cdot (\bm{\vp}_h^n - \bm{\vp}_h^{n-1}). \label{SAV:matrix:3}
\end{alignat}
\end{subequations}
Expressing \eqref{SAV:matrix:2} as
\begin{align}\label{SAV:matrix:2:alt}
\bm{\theta}_h^n = (\delta \bM + \tau \bS)^{-1} (\delta \bM \bm{\theta}_h^{n-1} + \gamma \bP^{n-1}(\bm{\vp}_h^n - \bm{\vp}_h^{n-1}))
\end{align}
and substituting this and \eqref{SAV:matrix:3} into \eqref{SAV:matrix:1} yields the following:
\begin{align*}
& (\alpha \bM + \lambda \eps \tau \bS + \tau \gamma^2 \bP^{n-1}( \delta \bM + \tau \bS)^{-1} \bP^{n-1}) \bm{\vp}_h^n + \frac{\tau}{2} (\bM \bm{b}^{n-1} \cdot \bm{\vp}_h^{n-1}) \bM \bm{b}^{n-1}  \\
& \quad = \alpha \bM \bm{\vp}_h^{n-1} - \tau q^{n-1}_h \bM \bm{b}^{n-1} + \frac{\tau}{2} (\bM \bm{b}^{n-1} \cdot \bm{\vp}_h^{n-1}) \bM \bm{b}^{n-1} + \tau \bm{c}^{n-1}  \\
& \qquad - \tau \gamma \bP^{n-1}(\delta \bM + \tau \bS)^{-1} (\delta \bM \bm{\theta}_h^{n-1} - \gamma \bP^{n-1} \bm{\vp}_{h}^{n-1}) \\
& \quad =: \bm{d}^{n-1}.
\end{align*}
We define the matrix $\bX^{n-1} := (\alpha \bM + \lambda \eps \tau \bS + \tau \gamma^2 \bP^{n-1}( \delta \bM + \tau \bS)^{-1} \bP^{n-1})$, which can be verified to be positive definite. Then, applying the inverse of $\bX^{n-1}$ to both sides, and taking the product of the resulting equality with $\bM \bm{b}^{n-1}$ yields
\[
(\bM \bm{b}^{n-1} \cdot \bm{\vp}_h^{n}) \Big ( 1 + \frac{\tau}{2} (\bX^{n-1})^{-1} \bM \bm{b}^{n-1} \cdot \bM \bm{b}^{n-1} \Big ) = (\bX^{n-1})^{-1} \bm{d}^{n-1}.
\]
This yields an expression for $\bM \bm{b}^{n-1} \cdot \bm{\vp}_h^n$ as
\begin{align*}
(\bM \bm{b}^{n-1} \cdot \bm{\vp}_h^{n}) = \frac{(\bX^{n-1})^{-1} \bm{d}^{n-1}}{1 + \frac{\tau}{2} (\bX^{n-1})^{-1} \bM \bm{b}^{n-1} \cdot \bM \bm{b}^{n-1}},
\end{align*}
which then provides the update formula for $\bm{\vp}_h^n$,
\begin{align*}
\bm{\vp}_h^n = (\bX^{n-1})^{-1} \bm{d}^{n-1} - \frac{\tau}{2}(\bM \bm{b}^{n-1} \cdot \bm{\vp}_h^{n}) \bM \bm{b}^{n-1},
\end{align*}
while $\bm{\theta}_h^n$ and $q^n_h$ can be computed via \eqref{SAV:matrix:2:alt} and \eqref{SAV:matrix:3}, respectively.  This yields the existence of the discrete solutions $(\vp_h^n, \theta_h^n, q^n_h)$ for \eqref{SAV}.  

For the existence of $\bu_h^n \in \bm{\mathcal{S}}_{h,0}$ it suffices to show that when expressing \eqref{SAV:u} into matrix-vector form its left-hand side involves an invertible matrix with the global nodal displacement vector $\bm{U} \in \R^{d\cdot Z_{h,0}}$ defined as the concatenation of the (interior) nodal evaluations of $\bu_h^n$:
\[
\bm{U} = (u_{1,1}, u_{2,1}, \dots, u_{d,1}, u_{1,2}, \dots, u_{d,2}, \dots, u_{1,Z_{h,0}}, \dots, u_{d,Z_{h,0}})^{\top},
\]
where $\mathrm{dim}(\bm{\mathcal{S}}_{h,0}) = Z_{h,0}$. We focus on the case $d = 3$ as the case $d = 2$ can be treated similarly. For a fourth order tensor $\mathbb{C}$ and a second order tensor $\bm{A}$ we denote their corresponding representation in Voigt notation as $\widehat{\mathbb{C}}$ and $\widehat{\bm{A}}$, respectively. We introduce a geometrical matrix $\mathbb{B} \in \R^{6 \times 3Z_{h,0}}$ of the form $\mathbb{B} = \begin{pmatrix} \mathbb{B}_1 & \mathbb{B}_2 & \cdots & \mathbb{B}_{Z_{h,0}}  \end{pmatrix}$, such that 
\[
\widehat{\E(\bu_h^n)} = \begin{pmatrix} \eps_{1,1} \\ \eps_{2,2} \\ \eps_{3,3} \\ \tfrac{1}{2}(\eps_{2,3} + \eps_{3,2}) \\ \tfrac{1}{2}(\eps_{1,3} + \eps_{3,1}) \\ \tfrac{1}{2}(\eps_{1,2} + \eps_{2,1}) \end{pmatrix} = \mathbb{B} \bm{U}, \quad \text{ where } \mathbb{B}_i = \begin{pmatrix} 
\pd_1 \chi_i^h & 0 & 0 \\ 0 & \pd_2 \chi_i^h & 0 \\ 0 & 0 & \pd_3 \chi_i^h \\ 0 & \tfrac{1}{2} \pd_3 \chi_i^h & \tfrac{1}{2} \pd_2 \chi_i^h \\ \tfrac{1}{2} \pd_3 \chi_i^h & 0 & \tfrac{1}{2} \pd_1 \chi_i^h \\ \tfrac{1}{2} \pd_2 \chi_i^h & \tfrac{1}{2} \pd_1 \chi_i^h & 0
\end{pmatrix}
\]
for $i \in \{1, \dots, Z_{h,0}\}$ and $\eps_{i,j}$ denotes the $(i,j)$th entry of the second order tensor $\E(\bu_h^n)$. Then, by choosing $\bv_h$ as appropriate combinations of the basis functions in \eqref{SAV:u}, we obtain the following linear system 
\[
\mathbb{K}_n \bm{U} = \bm{F}_n
\]
with global stiffness matrix $\mathbb{K}_n$ and load vector $\bm{F}_n$ defined as
\begin{align*}
(\mathbb{K}_n)_{i,j} &  = (k(\vp_h^n),1)^h [\mathbb{B}^{\top} \widehat{\CC^{(1)}} \mathbb{B}]_{i,j} + (1-k(\vp_h^n),1)^h [\mathbb{B}^{\top} \widehat{\CC^{(0)}} \mathbb{B}]_{i,j}, \\
(\bm{F}_n)_{k} & = (k(\vp_h^n)(m(\vp_h^n) - \beta(\theta_h^n - \theta_h^0)), 1)^h \; (\mathbb{B}^{\top} \widehat{\CC^{(1)}} \widehat{\mathbb{I}})_k \\
& \quad + ((1- k(\vp_h^n)(m(\vp_h^n) - \beta(\theta_h^n - \theta_h^0)), 1)^h  \; (\mathbb{B}^{\top} \widehat{\CC^{(0)}} \widehat{\mathbb{I}})_k,
\end{align*}
for $1 \leq i, j ,k \leq 3Z_{h,0}$. In the above we used the notation $(f,g)^h := \int_\Omega \Ih(fg) \, dx$. It is straightforward to see that for any arbitrary vector $\bm{V} \in \R^{3 Z_{h,0}}$ corresponding to $\bm{v} \in  \bm{\mathcal{S}}_{h,0}$ via the relation $\widehat{\E(\bv)} = \mathbb{B} \bm{V}$, it holds that 
\[
\bm{V}^{\top} \mathbb{K}_n \bm{V} = (\Ih(\CC(\vp_h^n)) \E(\bv), \E(\bv)).
\]
Hence, using that $\Ih(\CC(\vp_h^n))$ is positive definite on symmetric matrices we deduce that $\mathbb{K}_n$ is invertible, which in turn leads to the existence of a solution $\bm{u}_h^n$ satisfying \eqref{SAV:u}.
\end{proof}
\begin{remark}
The computational cost for each update step in \eqref{SAV} amounts to solving three linear systems: $(\delta \bM + \tau \bS)^{-1}$ in $\bX^{n-1}$, $(\bX^{n-1})^{-1} \bm{d}^{n-1}$ and $(\bX^{n-1})^{-1} \bM \bm{b}^{n-1}$. Once these three quantities are computed, the update for $(\bm{\vp}_h^n, \bm{\theta}_h^n, q^n_h)$ no longer involve any matrix inversions.
\end{remark}

\subsection{Stability estimates}
For a better presentation, we set the constants $\alpha$, $\beta$, $\gamma$, $\delta$, $\lambda$ and $\eps$ to be equal to 1 as their values have no bearing on the analysis.
\begin{lem}\label{lem:stab}
There exists a positive constant $C$ depending only on model parameters, such that for any $\tau \in (0,1)$ and $n = 1, \dots, N_\tau$, the following estimate holds for the discrete solutions $\{\vp_h^n, \theta_h^n, q^n_h, \bu_h^n\}$ to \eqref{SAV}-\eqref{SAV:u}:
\begin{equation}\label{stab:est}
\begin{aligned}
& \max_{k = 1, \dots, N_\tau} \Big ( \| \vp_h^k \|_{H^1}^2 + \| \theta_h^k \|_{H^1}^2 + \| \bu_h^k \|_{H^1}^2 + |q^k_h |^2 \Big )\\
& \quad + \sum_{n=1}^{N_\tau} \tau \Big ( \| \Delta_h \vp_h^n \|^2 + \| \vp_h^n \|_{W^{1,r}}^2 + \| \Delta_h \theta_h^n \|^2 + \| \theta_h^n \|_{W^{1,r}}^2 \Big )  \\
& \quad + \sum_{n=1}^{N_\tau} \Big ( \| \vp_h^n - \vp_h^{n-1} \|_{H^1}^2 + \| \theta_h^n - \theta_h^{n-1} \|_{H^1}^2 + |q^n_h - q^{n-1}_h|^2 \Big )\\
& \quad + \sum_{n=1}^{N_\tau} \frac{1}{\tau} \Big (  \| \vp_h^n - \vp_h^{n-1} \|^2 + \| \theta_h^n - \theta_h^{n-1} \|^2 \Big ) \leq C,
\end{aligned}
\end{equation}
with exponent $r$ as in \eqref{BLN:est}, and for arbitrary $l \in \{1, \dots, N_\tau\}$,
\begin{align}\label{stab:est:2}
\sum_{n=0}^{N_\tau - l} \tau \Big ( \| \vp_h^{n+l} - \vp_h^n \|^2 + \| \theta_h^{n+l} - \theta_h^n \|^2 \Big ) \leq C l \tau.
\end{align}
\end{lem}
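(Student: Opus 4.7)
The strategy is a standard SAV energy argument enhanced with two additional Laplacian tests and a telescoping bootstrap for the $L^2$ bound of $\vp_h^n$. First, I will test \eqref{SAV:1} with $\psi_h = \vp_h^n - \vp_h^{n-1}$, test \eqref{SAV:2} with $\psi_h = \tau \theta_h^n$, and multiply \eqref{SAV:3} by $2 q_h^n$, then sum the three resulting identities. The key cancellations are: (i) the nonlinear $W'$ contribution in \eqref{SAV:1} collapses via \eqref{SAV:3} into $2q_h^n(q_h^n - q_h^{n-1}) = |q_h^n|^2 - |q_h^{n-1}|^2 + |q_h^n - q_h^{n-1}|^2$; (ii) the cross coupling $(\theta_h^n\,p(\vp_h^{n-1}), \vp_h^n - \vp_h^{n-1})$ from \eqref{SAV:1} cancels exactly with $(p(\vp_h^{n-1})(\vp_h^n - \vp_h^{n-1}), \theta_h^n)$ from \eqref{SAV:2}. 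The only leftover bulk term is $\theta_c (p(\vp_h^{n-1}), \vp_h^n - \vp_h^{n-1})$, which, since $p$ is bounded by \eqref{ass:p}, is absorbed into $\tfrac{1}{2\tau}\|\vp_h^n-\vp_h^{n-1}\|^2$ by Young at the cost of $C\tau$. Telescoping the $\|\nabla\vp_h^n\|^2$, $\|\theta_h^n\|^2$, $|q_h^n|^2$ pieces and using $N_\tau \tau = T$ yields the uniform control of $\|\nabla\vp_h^k\|$, $\|\theta_h^k\|$, $|q_h^k|$, together with $\sum\tfrac{1}{\tau}\|\vp_h^n-\vp_h^{n-1}\|^2$, $\sum\|\nabla(\vp_h^n-\vp_h^{n-1})\|^2$, $\sum\|\theta_h^n-\theta_h^{n-1}\|^2$, $\sum\tau\|\nabla\theta_h^n\|^2$ and $\sum|q_h^n - q_h^{n-1}|^2$.

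Second, I upgrade the temperature estimates by testing \eqref{SAV:2} with $\psi_h = \theta_h^n - \theta_h^{n-1}$ (the analogous cancellation is lost here, but the coupling term is controlled by $\tfrac{C}{\tau}\|\vp_h^n-\vp_h^{n-1}\|^2 + \tfrac{1}{2\tau}\|\theta_h^n-\theta_h^{n-1}\|^2$ via Young, the first of which is already bounded by the previous step). This yields $\sum \tfrac{1}{\tau}\|\theta_h^n-\theta_h^{n-1}\|^2$, $\|\nabla\theta_h^k\|$, and $\sum\|\nabla(\theta_h^n-\theta_h^{n-1})\|^2$ all bounded. To pick up the missing $L^2$ bound of $\vp_h^n$ (there is no mass conservation for \eqref{SAV:1}), I telescope $\vp_h^n = \vp_h^0 + \sum_{k=1}^n (\vp_h^k - \vp_h^{k-1})$ and apply Cauchy--Schwarz, using $\sum_{k=1}^n \|\vp_h^k - \vp_h^{k-1}\|^2 \leq \tau\sum_{k=1}^{N_\tau}\tfrac{1}{\tau}\|\vp_h^k - \vp_h^{k-1}\|^2 \leq C\tau$ to get $\|\vp_h^n\| \leq \|\vp_h^0\| + \sqrt{n\tau}\cdot C \leq C(1+\sqrt T)$. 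Combined with $\|\nabla\vp_h^k\|$ bounded, this yields $\|\vp_h^k\|_{H^1} \leq C$.

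Third, to obtain the $\Delta_h$ and $W^{1,r}$ estimates, I test \eqref{SAV:1} with $\psi_h = -\Delta_h \vp_h^n$ and \eqref{SAV:2} with $\psi_h = -\Delta_h \theta_h^n$, using the identity $(\nabla f_h, \nabla(-\Delta_h f_h)) = \|\Delta_h f_h\|^2$ and the polarization $(f_h^n - f_h^{n-1}, -\Delta_h f_h^n) = \tfrac{1}{2}(\|\nabla f_h^n\|^2 - \|\nabla f_h^{n-1}\|^2 + \|\nabla(f_h^n-f_h^{n-1})\|^2)$. The key observation is that $H^1 \hookrightarrow L^6$ in $d \in \{2,3\}$, together with the $H^1$ bound on $\vp_h^{n-1}$ and the growth assumption \eqref{ass:W}, forces $\|W'(\vp_h^{n-1})\| \leq C$; likewise $|q_h^n|/Q_h^{n-1} \leq |q_h^n|$ is bounded thanks to $Q_h^{n-1} \geq 1$, and $p$, $\theta_h^n$ are bounded in $L^2$. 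All right-hand side contributions are thus of the form $C\|\Delta_h f_h^n\|$, absorbed via Young. After multiplying by $\tau$ and summing, I obtain $\sum \tau (\|\Delta_h \vp_h^n\|^2 + \|\Delta_h\theta_h^n\|^2) \leq C$, and \eqref{BLN:est} then delivers $\sum\tau(\|\vp_h^n\|_{W^{1,r}}^2 + \|\theta_h^n\|_{W^{1,r}}^2) \leq C$. For the displacement, testing \eqref{SAV:u} with $\bv_h = \bu_h^n$, using the coercivity of $\Ih(\CC(\vp_h^n))$ guaranteed by \eqref{ass:CC} and \eqref{nodal:bdd}, plus Korn's inequality on $X(\Omega)$, and the $L^2$ bounds on $m(\vp_h^n)$ and $\theta_h^n - \theta_h^0$, yields $\|\bu_h^k\|_{H^1} \leq C$.

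Finally, \eqref{stab:est:2} follows from a standard double-sum rearrangement: writing $\vp_h^{n+l} - \vp_h^n = \sum_{j=n+1}^{n+l}(\vp_h^j - \vp_h^{j-1})$, Cauchy--Schwarz gives $\|\vp_h^{n+l}-\vp_h^n\|^2 \leq l\sum_{j=n+1}^{n+l}\|\vp_h^j - \vp_h^{j-1}\|^2$, and swapping the order of summation (each $j$ appears for at most $l$ values of $n$) together with $\sum_j \|\vp_h^j - \vp_h^{j-1}\|^2 \leq C\tau$ and $l\tau \leq T$ yields $\sum_{n=0}^{N_\tau-l}\tau\|\vp_h^{n+l}-\vp_h^n\|^2 \leq Cl^2\tau^2 \leq CTl\tau$; the $\theta$ version is identical. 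I expect the main technical pitfall to be the careful bookkeeping in Step~1 to ensure the SAV cancellation and the $\theta$-coupling cancellation happen simultaneously with the correct $\tau$-scaling, together with justifying uniform boundedness of $\|W'(\vp_h^{n-1})\|$ before Step~3 rather than having to iterate an implicit-in-itself argument; this is resolved by observing that the first energy estimate already provides $H^1$ control of $\vp_h^{n-1}$, which feeds into the Sobolev embedding independently of the higher-regularity tests.
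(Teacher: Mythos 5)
Your proposal is correct and shares the paper's overall architecture (SAV energy identity with the two cancellations, then Laplacian tests, then the time-translate bound, then coercivity plus Korn for $\bu_h^n$), but several sub-steps are genuinely different and in fact more elementary. The paper tests \eqref{SAV:2} with $\tau(\theta_h^n-\theta_c)$ so that the coupling cancels exactly with no residual, whereas you test with $\tau\theta_h^n$ and absorb the leftover $\theta_c(p(\vp_h^{n-1}),\vp_h^n-\vp_h^{n-1})$; both work. More substantially, the paper obtains the missing $L^2$ bound on $\vp_h^n$ by testing \eqref{SAV:1} with $\vp_h^n$, using \eqref{SAV:3} together with $|W'(s)s|\le C(1+W(s))$ and a discrete Gronwall argument, while you get it directly from $\|\vp_h^n\|\le\|\vp_h^0\|+\sqrt{n}\,(\sum_k\|\vp_h^k-\vp_h^{k-1}\|^2)^{1/2}\le C(1+\sqrt{T})$ — simpler, Gronwall-free, and it needs no structure of $W'$ beyond what the first estimate already gives. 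Likewise, the paper proves \eqref{stab:est:2} by re-testing the equations with $\tau(\vp_h^{m+l}-\vp_h^m)$ and summing over the window, whereas your double-sum rearrangement deduces it purely from $\sum_j\|\vp_h^j-\vp_h^{j-1}\|^2\le C\tau$; this is cleaner and yields the same $Cl\tau$ via $l^2\tau^2\le T\,l\tau$. One small imprecision in your Step 3: not every right-hand contribution in the $\Delta_h$ tests is of the form $C\|\Delta_h f_h^n\|$ with a uniform constant — the terms $\frac{1}{\tau}(\vp_h^n-\vp_h^{n-1},-\Delta_h\vp_h^n)$ and $\frac{1}{\tau}(p(\vp_h^{n-1})(\vp_h^n-\vp_h^{n-1}),-\Delta_h\theta_h^n)$ carry the factor $\frac{1}{\tau}\|\vp_h^n-\vp_h^{n-1}\|$, which is only controlled after Young, multiplication by $\tau$, and summation against the first estimate's bound $\sum_n\frac{1}{\tau}\|\vp_h^n-\vp_h^{n-1}\|^2\le C$ (for the $\vp$-equation your polarization identity sidesteps this, but for the $\theta$-coupling it does not). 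This is a bookkeeping issue, not a gap, and the paper handles it the same way via a comparison argument.
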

\begin{proof}
In the sequel the symbol $C$ denote positive constants independent of $h$, $\tau$ and $n \in \{1, \dots, N_\tau\}$, whose values may change line from line and within the same line.\\

\noindent {\bf First estimate.} Taking $\psi_h = \vp_h^n - \vp_h^{n-1}$ in \eqref{SAV:1}, $\psi_h = \tau (\theta_h^n - \theta_c)$ in \eqref{SAV:2} and multiplying \eqref{SAV:3} with $2 q^n_h$, upon summing and using the identity $(a-b)a = \frac{1}{2}( a^2 - b^2 + (a-b)^2)$ leads to 
\begin{align*}
& \frac{1}{2} \Big (\| \nabla \vp_h^n\|^2 - \| \nabla \vp_h^{n-1} \|^2 + \| \nabla (\vp_h^n - \vp_h^{n-1}) \|^2 \Big ) +  \Big ( |q^n_h|^2 - |q^{n-1}_h|^2 + |q^n_h - q^{n-1}_h |^2 \Big ) \\
& \quad + \frac{1}{2} \Big ( \| \theta_h^n - \theta_c \|^2 - \| \theta_h^{n-1} - \theta_c \|^2 + \| \theta_h^n - \theta_h^{n-1} \|^2 \Big ) + \tau \| \nabla \theta_h^n \|^2 + \frac{1}{\tau} \| \vp_h^n - \vp_h^{n-1} \|^2 = 0.
\end{align*}
Summing from $n = 1$ to $n = k$ for arbitrary $k \in \{1, \dots, N_\tau\}$ and using \eqref{Ritz1} yields
\begin{equation}\label{stab:1}
\begin{aligned}
& \frac{1}{2} \Big ( \| \nabla \vp_h^k \|^2 + \sum_{n=1}^k \| \nabla (\vp_h^n - \vp_h^{n-1}) \|^2 \Big ) +  \Big ( |q^k_h|^2 + \sum_{n=1}^k |q^n_h - q^{n-1}_h |^2 \Big ) \\
& \qquad + \frac{1}{2} \Big ( \| \theta_h^k - \theta_c \|^2 + \sum_{n=1}^k \| \theta_h^n - \theta_h^{n-1} \|^2 \Big ) + \sum_{n=1}^k \tau \| \nabla \theta_h^n \|^2 + \sum_{n=1}^k \frac{1}{\tau} \| \vp_h^n - \vp_h^{n-1} \|^2 \\
& \quad = \frac{1}{2} \| \nabla \vp_h^0 \|^2 +  |q^0_h|^2 + \frac{\delta}{2} \|\theta_h^0 - \theta_c \|^2 \leq C \| \vp_0 \|_{H^2}^2 + C \| \theta_0 - \theta_c \|_{H^2}^2 \leq C.
\end{aligned}
\end{equation} 

\noindent {\bf Second estimate.} Taking $\psi_h = \vp_h^n$ in \eqref{SAV:1} gives
\begin{align*}
& \frac{1}{2} \Big ( \| \vp_h^n \|^2 - \| \vp_h^{n-1} \|^2 + \| \vp_h^n - \vp_h^{n-1} \|^2 \Big ) + \tau \| \nabla \vp_h^n \|^2 \\
& \quad =- \tau  \frac{q^n_h}{Q_h^{n-1}}  (W'(\vp_h^{n-1}), \vp_h^n) - \tau  (p(\vp_h^{n-1})(\theta_h^n - \theta_c), \vp_h^n) \\
& \quad = - 2 \tau |q^n_h|^2 + 2 \tau q^n_h q^{n-1}_h - \tau  \frac{q^n_h (W'(\vp_h^{n-1}), \vp_h^{n-1})}{Q_h^{n-1}} -  \tau (p(\vp_h^{n-1})(\theta_h^n - \theta_c), \vp_h^n),
\end{align*}
where we have used \eqref{SAV:3}. From \eqref{ass:W} it follows that there exists a positive constant $C$ such that 
\[
|W'(s)s| \leq C(1+|s|^4) \leq C(1 + W(s)) \quad \forall s \in \R,
\]
and so 
\[
\Big | \frac{(W'(\vp_h^{n-1}), \vp_h^{n-1})}{Q_h^{n-1}}  \Big | \leq \frac{C}{Q_h^{n-1}} \Big (\int_\Omega W(\vp_h^{n-1}) \dx + 1 \Big ) \leq C.
\]
Then, using the boundedness of $|q^k_h|^2$ and $\|\theta_h^k - \theta_c\|^2$ from \eqref{stab:1} and also \eqref{ass:p} we see that 
\begin{align*}
& \frac{1}{2} \Big ( \| \vp_h^n \|^2 - \| \vp_h^{n-1} \|^2 + \| \vp_h^n - \vp_h^{n-1} \|^2 \Big ) +  \tau  \| \nabla \vp_h^n \|^2 \leq C\tau + \frac{1}{2} \tau \| \vp_h^n \|^2.  
\end{align*}
Summing from $n = 1$ to $n = k$ for arbitrary $k \in \{1, \dots, N_\tau\}$ yields
\[
\frac{1}{2}(1 - \tau) \| \vp_h^k \|^2 + \sum_{n=1}^k \frac{1}{2} \| \vp_h^n -\vp_h^{n-1} \|^2 +  \tau  \sum_{n=1}^k \| \nabla \vp_h^n\|^2 \leq C\sum_{n=1}^k \tau + C \sum_{n=0}^{k-1} \tau \| \vp_h^{n} \|^2.
\]
For $\tau < 1$, invoking the discrete Gronwall inequality provides
\begin{align}\label{stab:2}
\| \vp_h^k \|_{H^1}^2 + \sum_{n=1}^k \| \vp_h^n - \vp_h^{n-1} \|^2 \leq C.
\end{align}

\noindent {\bf Third estimate.} Taking $\psi_h = \theta_h^n - \theta_h^{n-1}$ in \eqref{SAV:2} gives
\begin{align*}
& \frac{1}{\tau} \| \theta_h^n - \theta_h^{n-1} \|^2 + \frac{1}{2} \Big ( \|\nabla \theta_h^n\|^2  + \| \nabla (\theta_h^{n} - \theta_h^{n-1}) \|^2 - \| \nabla \theta_h^{n-1} \|^2 \Big ) \\
& \quad = \frac{1}{\tau} (p(\vp_h^{n-1})(\vp_h^{n} - \vp_h^{n-1}), \theta_h^{n} - \theta_h^{n-1})  \leq \frac{C}{\tau} \| \vp_h^{n} - \vp_h^{n-1} \|^2 + \frac{1}{2\tau} \|\theta_h^n - \theta_h^{n-1} \|^2.
\end{align*}
Summing from $n = 1$ to $n = k$ for arbitrary $k \in \{1, \dots, N_\tau\}$ and applying \eqref{stab:1} yields
\begin{equation}\label{stab:3}
\begin{aligned}
\| \nabla \theta_h^k \|^2 + \sum_{n=1}^k \| \nabla (\theta_h^n - \theta_h^{n-1}) \|^2 + \sum_{n=1}^k \frac{1}{\tau} \| \theta_h^n - \theta_h^{n-1} \|^2 \leq C.
\end{aligned}
\end{equation}

\noindent {\bf Fourth estimate.} Using the discrete Neumann-Laplacian \eqref{disc:lap}, equations \eqref{SAV:1} and \eqref{SAV:2} can be expressed as
\begin{align*}
0 & = \Big (  \frac{\vp^{n}_h - \vp^{n-1}_h}{\tau} +  \frac{q^n_h  W'(\vp^{n-1}_h)}{\eps Q^{n-1}_h} + (\theta^{n}_h - \theta_c) p(\vp^{n-1}_h) -  \Delta_h \vp_h^n, \psi_h \Big ),\\
0 & = \Big (  \frac{\theta^n_h - \theta^{n-1}_h}{\tau} -  p(\vp^{n-1}_h) \frac{\vp^n_h - \vp^{n-1}_h}{\tau} - \Delta_h \theta_h^n, \psi_h \Big ).
\end{align*}
Employing \eqref{stab:1} for $q^k_h$ and $\|\theta_h^k - \theta_c \|^2$, \eqref{stab:2} for $\vp_h^k$, and \eqref{ass:p}, upon choosing $\psi_h = - \Delta_h \vp_h^n$ and $\psi_h = -\Delta_h \theta_h^n$, respectively, we obtain
\begin{align*}
\| \Delta_h \vp_h^n \| \leq \frac{1}{\tau} \| \vp_h^n - \vp_h^{n-1} \| + C, \quad
\| \Delta_h \theta_h^n \| \leq \frac{1}{\tau}\| \theta_h^n - \theta_h^{n-1} \| + \frac{C}{\tau} \| \vp_h^n - \vp_h^{n-1} \|.
\end{align*}
Squaring and multiplying by $\tau$ on both sides, then summing from $n = 1$ to $n = k$ for arbitrary $k \in \{1, \dots, N_\tau\}$ leads to 
\begin{align}\label{stab:4}
\sum_{n=1}^k \tau \| \Delta_h \vp_h^n \|^2 + \sum_{n=1}^k \tau \| \Delta_h \theta_h^n \|^2 \leq C.
\end{align}
By \eqref{BLN:est} we also infer for $r <\infty$ if $d = 2$ and $r = 6$ if $d = 3$,
\begin{align}\label{stab:4a}
\sum_{n=1}^k \tau \| \nabla \vp_h^n \|_{L^r}^2 + \sum_{n=1}^k \tau \| \nabla \theta_h^n \|_{L^r}^2 \leq C.
\end{align}

\noindent {\bf Fifth estimate.} For $l = 1, \dots, N_\tau$ and $m = 0, \dots, N_\tau - l$, we consider $\psi_h = \tau (\vp_h^{m+l} - \vp_h^m)$ in \eqref{SAV:1}, leading to 
\begin{align*}
0 & =  (\vp_h^n - \vp_h^{n-1}, \vp_h^{m+l} - \vp_h^m) +  \tau (\nabla \vp_h^n, \nabla (\vp_h^{m+l} - \vp_h^m)) \\
& \quad + \tau  \frac{q^n_h}{Q_h^{n-1}} ( W'(\vp_h^{n-1}), \vp_h^{m+l} - \vp_h^m) +  \tau ((\theta_h^n - \theta_c) p(\vp_h^{n-1}), \vp_h^{m+l} - \vp_h^m ).
\end{align*}
Summing this identity from $n = m+1$ to $m+l$ gives
\begin{align*}
\|\vp_h^{m+l} - \vp_h^m \|^2 & \leq C\tau \sum_{n=m+1}^{m+l} \Big ( \|  \vp_h^n \|_{H^1} + \| \vp_h^{n-1} \|_{H^1}^3 + |\theta_h^n - \theta_c|_h \Big ) \| \vp_h^{m+l} - \vp_h^m \|_{H^1}  \\
& \leq C l \tau \| \vp_h^{m+l} - \vp_h^m \|_{H^1} \leq Cl \tau,
\end{align*}
on account of \eqref{stab:1} and \eqref{stab:2}. Hence, we obtain after multiplying by $\tau$ and summing from $m =0$ to $N_\tau - l$ that
\begin{align}\label{stab:5}
\tau \sum_{m=0}^{N_\tau - l} \| \vp_h^{m + l} - \vp_h^m \|^2 \leq C l \tau.
\end{align}
Similarly, by considering $\psi_h = \tau (\theta_h^{m+l} - \theta_h^m)$ in \eqref{SAV:2} we have
\begin{align*}
0 & =  (\theta_h^n - \theta_h^{n-1}, \theta_h^{m+l} - \theta_h^m) + \tau (\nabla \theta_h^n, \nabla (\theta_h^{m+l} - \theta_h^{m})) -  (p(\vp_h^{n-1})(\vp_h^{n} - \vp_h^{n-1}), \theta_h^{m+l} - \theta_h^m).
\end{align*}
Then, summing from $n = m+1$ to $m + l$ gives
\begin{align*}
 \| \theta_h^{m+l} - \theta_h^m \|^2 & \leq C \tau \sum_{n=m+1}^{m+l} \Big ( \| \nabla \theta_h^n \| + \| \vp_h^n - \vp_h^{n-1} \| \Big )\| \theta_h^{m+l} - \theta_h^m \|_{H^1} \\
& \leq C l \tau \| \theta_h^{m+l} - \theta_h^{m} \|_{H^1} \leq C l \tau,
\end{align*}
on account of \eqref{stab:1} and \eqref{stab:3}. Hence, analogous to \eqref{stab:5} we obtain for any $l = 1, \dots, N_\tau$,
\begin{align}\label{stab:5a}
\tau \sum_{m=0}^{N_\tau - l} \| \theta_h^{m + l} - \theta_h^m \|^2 \leq C l \tau.
\end{align}

\noindent {\bf Sixth estimate.} Turning to \eqref{SAV:u}, choosing $\bv_h = \bu_h^n$ yields with \eqref{ass:CC} and \eqref{nodal:bdd}
\begin{align*}
\min(c_4,c_6) \| \E(\bu_h^n) \|^2 \leq C \| \E(\bu_h^n) \| \Big ( 1 + \| \theta_h^n - \theta_h^0 \| \Big ).
\end{align*}
By \eqref{stab:1} and Korn's inequality we deduce that for arbitrary $k \in \{1, \dots, N_\tau\}$,
\begin{align}\label{stab:6}
\| \bu_h^k \|_{H^1} \leq C.
\end{align}
\end{proof}

\subsection{Compactness and convergence of fully discrete solutions}
Let us recall the following compactness result from \cite[Sec.~8, Cor.~4]{Simon}:  For Banach spaces $X$, $B$ and $Y$ with compact embedding $X \subset \subset B$ and continuous embedding $B \subset Y$, then 
\begin{align}
\{ \zeta \in L^p(0,T;X) \, : \, \pd_t \eta \in L^1(0,T;Y) \} &\subset \subset L^p(0,T;B) \quad  &&\text{ for any } 1 \leq p < \infty, \label{Simon:1} \\
\notag \{ \zeta \in L^\infty(0,T;X) \, : \, \pd_t \eta \in L^r(0,T;Y)\} &\subset \subset C^0([0,T];B) \quad && \text{ for any } r > 1.
\end{align}
Moreover, we also require \cite[Sec.~8, Thm.~5]{Simon}: if $F$ is a bounded set in $L^p(0,T;X)$ for $1 \leq p \leq \infty$ and 
\begin{align}\label{Simon:2}
\| f(\cdot + s) - f(\cdot) \|_{L^p(0,T-s;Y)} \to 0 \text{ as } s \to 0 
\end{align}
uniformly for $f \in F$, then $F$ is relatively compact in $L^p(0,T;B)$ if $1 \leq p < \infty$, and in $C^0([0,T];B)$ if $p = \infty$.

Introducing the piecewise linear and piecewise constant extensions of time discrete functions $\{a^n\}_{n=0}^{N_\tau}$:
\begin{equation*}
\begin{alignedat}{2}
a^\tau(\cdot, t) & := \frac{t - t^{n-1}}{\tau} a^n(\cdot) + \frac{t^n - t}{\tau} a^{n-1}(\cdot) && \text{ for } t \in [t^{n-1}, t^n], \quad n \in \{1, \dots, N_\tau \}, \\
a^{-}(\cdot, t) & := a^{n-1}(\cdot) && \text{ for } t \in (t^{n-1}, t^n], \quad n \in \{1, \dots, N_\tau \}, \\
a^{+}(\cdot, t) &:= a^{n}(\cdot) && \text{ for } t \in (t^{n-1}, t^n], \quad n \in \{1, \dots, N_\tau \}.
\end{alignedat}
\end{equation*}
Then, multiplying \eqref{SAV:1}, \eqref{SAV:2} and \eqref{SAV:u} by $\tau$ and summing from $n = 1, \dots, N_\tau$, we obtain for arbitrary test functions $\psi_h \in L^2(0,T;\Sh)$ and $\bv_h \in L^2(0,T;\bm{\mathcal{S}}_{h,0})$ that
\begin{subequations}\label{cts:time}
\begin{alignat}{2}
0 & = \int_{\Omega_T} \Big (\pd_t \vp_h^\tau + \frac{q^+_h W'(\vp_h^-)}{Q(\vp_h^-)} +  (\theta_h^+ - \theta_c) p(\vp_h^-) \Big ) \psi_h + \nabla \vp_h^+ \cdot \nabla \psi_h \dxt, \label{cts:SAV:1} \\
0 & = \int_{\Omega_T} \Big ( \pd_t \theta_h^\tau - p(\vp_h^-) \pd_t \vp_h^\tau \Big ) \psi_h + \nabla \theta_h^+ \cdot \nabla \psi_h \dxt, \label{cts:SAV:2} \\
0 & = \int_{\Omega_T} \Big ( \Ih[\CC(\vp_h^+) ] \E(\bu_h^+) - \Ih \Big [ \CC(\vp_h^+) ((\theta_h^+ - \theta_h^0) - m(\vp_h^+)) \mathbb{I} \Big ] \Big ) : \E(\bv_h) \dxt. \label{cts:SAV:3}
\end{alignat}
\end{subequations}
Note that for $t \in (t^{n-1}, t^n]$, we have the relations
\[
a^\tau(t) - a^-(t) = \frac{t-t^{n-1}}{\tau} (a^n - a^{n-1}), \quad a^+(t) - a^\tau(t) = \frac{t^n - t}{\tau}(a^n - a^{n-1}),
\]
and together with Lemma \ref{lem:stab} we deduce that (using the notation $a^{\tau,\pm}$ to denote $\{a^\tau, a^{-}, a^+\}$ and $a^{\pm}$ to denote $\{a^-, a^+\}$)  
\begin{equation}\label{unif:bdd}
\begin{aligned}
& \| \vp_h^{\tau,\pm} \|_{L^\infty(0,T;H^1)}^2 + \|q^{\tau,\pm}_h\|_{L^\infty(0,T)}^2 + \| \theta_h^{\tau,\pm} \|_{L^\infty(0,T;H^1)}^2 + \| \bu_h^{\tau,\pm} \|_{L^\infty(0,T;H^1)}^2\\
& \qquad + \| \pd_t \vp_h^\tau \|_{L^2(0,T;L^2)}^2 + \| \pd_t \theta_h^{\tau} \|_{L^2(0,T;L^2)}^2  + \| \Delta_h \vp_h^{\pm} \|_{L^2(0,T;L^2)}^2 + \| \Delta_h \theta_h^{\pm} \|_{L^2(0,T;L^2)}^2 \\
& \qquad + \| \vp_h^{\tau,\pm} \|_{L^2(0,T;W^{1,r})}^2 + \| \theta_h^{\tau,\pm} \|_{L^2(0,T;W^{1,r})}^2 \\
& \qquad + \frac{1}{\tau} \| \vp_h^\tau - \vp_h^{\pm} \|_{L^2(0,T;H^1)}^2 + \frac{1}{\tau} \| \theta_h^\tau - \theta_h^{\pm} \|_{L^2(0,T;H^1)}^2 + \frac{1}{\tau} \| q^\tau_h - q^{\pm}_h \|_{L^2(0,T)}^2 \\
& \quad \leq C,
\end{aligned}
\end{equation}
while \eqref{stab:est:2} translates to 
\begin{align}\label{time:trans}
\| \vp_h^{\tau, \pm}(\cdot  + l\tau) - \vp_h^{\tau,\pm}(\cdot) \|_{L^2(0,T-l\tau;L^2)}^2 +
\| \theta_h^{\tau, \pm}(\cdot  + l\tau) - \theta_h^{\tau,\pm}(\cdot) \|_{L^2(0,T-l\tau;L^2)}^2 \leq Cl \tau,
\end{align}
for any $l \in \{1, \dots, N_\tau\}$.

\begin{prop}[Compactness]\label{prop:comp}
There exists a non-relabelled subsequence $(h, \tau) \to (0,0)$ and functions $\vp$, $\theta$, $\bu$ and $q$ satisfying
\begin{align*}
& \vp, \theta  \in L^\infty(0,T;H^1(\Omega)) \cap L^2(0,T;H^2_n(\Omega)) \cap H^1(0,T;L^2(\Omega)), \\
& \bu  \in L^\infty(0,T;X(\Omega)) , \quad q \in L^\infty(0,T),
\end{align*}
such that for $f \in \{ \vp, \theta\}$ and any $r \in [2,\infty), s \in [0,1)$ if $d = 2$ and $r \in [2,6], s \in [0,\frac{1}{2})$ if $d = 3$,
\begin{subequations}
\begin{alignat}{2}
f_h^{\tau,\pm} \to f & \text{ weakly* in } L^\infty(0,T;H^1(\Omega)), \label{comp:weak} \\
f_h^{\tau,\pm} \to f & \text{ weakly in } L^2(0,T;W^{1,r}(\Omega)), \label{comp:weak:W} \\
\pd_t f_h^\tau \to \pd_t f & \text{ weakly in } L^2(0,T;L^2(\Omega)), \label{comp:time} \\
\bu_h^{\tau,\pm} \to \bu & \text{ weakly in } L^2(0,T;H^1(\Omega)), \label{comp:u} \\
q^{\tau,\pm}_h \to q & \text{ weakly* in } L^\infty(0,T),  \label{comp:q} \\
\Delta_h f_h^{\tau,\pm} \to \Delta f & \text{ weakly in } L^2(0,T;L^2(\Omega)), \label{comp:H2} \\
f_h^{\tau,\pm} \to f & \text{ strongly in } L^2(0,T;C^{0,s}(\overline{\Omega})) \text{ and a.e.~in } \Omega_T, \label{comp:str} \\
Q(\vp_h^{\tau,\pm}) \to Q(\vp) & \text{ strongly in } L^2(0,T), \label{comp:Q}
\end{alignat}
\end{subequations}
with $\vp(0) = \vp_0$ and $\theta(0) = \theta_0$.
\end{prop}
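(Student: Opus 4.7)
The plan is to extract convergent subsequences from the uniform estimates \eqref{unif:bdd} and \eqref{time:trans}, identify limits of piecewise linear and piecewise constant interpolants, and then upgrade to the strong convergences needed for passing to the limit in the nonlinearities. First, by Banach--Alaoglu applied to the bounds in \eqref{unif:bdd}, I extract a non-relabelled subsequence so that \eqref{comp:weak}, \eqref{comp:weak:W}, \eqref{comp:time}, \eqref{comp:u}, \eqref{comp:q} hold; the weak limits of $\vp_h^\tau$, $\vp_h^+$, $\vp_h^-$ (and analogously of $\theta_h^{\tau,\pm}$ and $q^{\tau,\pm}_h$) are shown to coincide by the telescoping bound $\|\vp_h^\tau - \vp_h^\pm\|_{L^2(0,T;H^1)}^2 \leq C\tau$ from \eqref{unif:bdd}, which forces the differences to vanish in the limit.

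Next I turn to strong convergence. For $\vp_h^\tau$ (and $\theta_h^\tau$) I combine the uniform bound $\|\vp_h^\tau\|_{L^2(0,T;W^{1,r})} + \|\pd_t \vp_h^\tau\|_{L^2(0,T;L^2)} \leq C$ with the compact embedding $W^{1,r}(\Omega) \subset \subset C^{0,s}(\overline\Omega)$ (valid for the exponents stated, using $d\in\{2,3\}$) and the continuous embedding $C^{0,s} \hookrightarrow L^2$, and invoke \eqref{Simon:1} to conclude strong convergence in $L^2(0,T;C^{0,s}(\overline\Omega))$. For the piecewise constant interpolants $\vp_h^\pm$, $\theta_h^\pm$, I instead apply Simon's theorem via \eqref{Simon:2}, using the translation estimate \eqref{time:trans} together with the $L^2(0,T;W^{1,r})$ bound; again the strong limit is identified with $\vp$ (resp.\ $\theta$) using the previous step. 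Extraction of a further subsequence then yields a.e.\ convergence in $\Omega_T$, giving \eqref{comp:str}.

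For \eqref{comp:H2}, I use the definition \eqref{disc:lap}: given $\psi \in C^\infty_c((0,T); C^2(\overline\Omega))$ with $\pdnu \psi = 0$ on $\pd\Omega$, testing against $\Ih \psi$ and using the interpolation estimate \eqref{Interpol} together with the uniform $L^2(0,T;L^2)$ bound on $\Delta_h \vp_h^\pm$ gives in the limit $\int_0^T (\text{w-}\lim \Delta_h \vp_h^\pm, \psi)\, dt = - \int_0^T (\nabla \vp, \nabla \psi)\, dt$. An integration by parts using $\pdnu \psi = 0$ and density then identifies the weak limit as $\Delta \vp$ in $L^2(0,T;L^2(\Omega))$; since $\Omega$ is convex, elliptic regularity upgrades this to $\vp \in L^2(0,T;H^2_n(\Omega))$, and similarly for $\theta$. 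The convergence \eqref{comp:Q} follows from \eqref{comp:str} together with Vitali's theorem: $W$ has at most quartic growth by \eqref{ass:W}, the sequence $\{\vp_h^\pm\}$ is equi-integrable in every $L^p(\Omega_T)$ for $p < \infty$ (using the $L^\infty(0,T;H^1)$ bound and Sobolev embedding), so $W(\vp_h^\pm) \to W(\vp)$ in $L^1(\Omega_T)$, whence $\int_\Omega W(\vp_h^\pm(\cdot,t))\,dx \to \int_\Omega W(\vp(\cdot,t))\,dx$ in $L^1(0,T)$ and by boundedness also in $L^2(0,T)$, from which the convergence of $Q(\vp_h^\pm)$ follows by continuity of $s \mapsto (s+1)^{1/2}$ and uniform positivity of the integrand. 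Finally, the initial conditions $\vp(0)=\vp_0$ and $\theta(0)=\theta_0$ follow from $\vp, \theta \in H^1(0,T;L^2(\Omega)) \hookrightarrow C([0,T];L^2(\Omega))$, the choice $\vp_h^0 = R_h \vp_0$, $\theta_h^0 = R_h \theta_0$, and the convergence $R_h \vp_0 \to \vp_0$, $R_h \theta_0 \to \theta_0$ in $L^2(\Omega)$ from \eqref{Ritz1}.

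The main obstacle I anticipate is not any single estimate but rather the bookkeeping required to keep the three interpolants ($\vp_h^\tau$, $\vp_h^+$, $\vp_h^-$) consistent: strong convergence of $\vp_h^\tau$ via Aubin--Lions is standard, but transferring it to the piecewise constants (which is what appears in the nonlinear terms of the scheme \eqref{cts:SAV:1}--\eqref{cts:SAV:3}) requires the translation bound \eqref{time:trans} and care in applying Simon's theorem \eqref{Simon:2}, and the identification of the discrete-Laplacian limit requires the convex-domain elliptic regularity to place $\vp$ and $\theta$ in $H^2_n(\Omega)$ with the correct Neumann trace.
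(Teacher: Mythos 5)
Your proposal is correct and follows the same overall strategy as the paper: weak/weak-* compactness from the uniform bounds \eqref{unif:bdd}, identification of the limits of the three interpolants via the $O(\sqrt{\tau})$ difference bound, Simon's criteria \eqref{Simon:1} for the piecewise linear interpolants and \eqref{Simon:2} with \eqref{time:trans} for the piecewise constant ones, identification of the weak limit of $\Delta_h f_h^{\tau,\pm}$ through the nodal interpolation estimates followed by convex-domain elliptic regularity to land in $H^2_n(\Omega)$. Two sub-steps differ in route but not in substance. For \eqref{comp:Q} you use a.e.\ convergence plus Vitali (equi-integrability of $|\vp_h^{\pm}|^4$ from the $L^\infty(0,T;H^1)\hookrightarrow L^\infty(0,T;L^6)$ bound), whereas the paper derives the pointwise-in-time Lipschitz-type estimate $|Q(\vp_h^{\tau,\pm})-Q(\vp)|\leq C\|\vp_h^{\tau,\pm}-\vp\|$ directly from the growth condition \eqref{ass:W} and reads off $L^2(0,T)$ convergence from strong $L^2(\Omega_T)$ convergence; the paper's version is slightly more quantitative and is reused later in the error analysis, but yours is equally valid. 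For the initial conditions you invoke weak continuity of the trace map $f\mapsto f(0)$ on $H^1(0,T;L^2(\Omega))$ together with $R_h\vp_0\to\vp_0$, while the paper unwinds the same fact via an integration-by-parts duality argument with test functions vanishing at $t=T$; these are equivalent, and your phrasing is arguably cleaner. The only imprecision is the claim that $\{\vp_h^\pm\}$ is ``equi-integrable in every $L^p(\Omega_T)$ for $p<\infty$'' — what you actually have (and all you need) is boundedness in $L^6(\Omega_T)$, hence equi-integrability of $|\vp_h^\pm|^4$ in $L^1(\Omega_T)$; this does not affect the argument.
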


\begin{proof}
We first note from \eqref{unif:bdd} that 
\begin{align}\label{tau:pm:diff}
\| \vp_h^{\tau} - \vp_h^{\pm} \|_{L^2(0,T;H^1)} + \| \theta_h^{\tau} - \theta_h^{\pm} \|_{L^2(0,T;H^1)} + \| q^\tau_h - q^{\pm}_h \|_{L^2(0,T)} \leq C \sqrt{\tau},
\end{align}
which shows that the limits of $\vp^{\tau,\pm}$ (likewise for $\theta_h^{\tau,\pm}$ and $q^{\tau,\pm}_h$) coincide as $(h,\tau) \to (0,0)$. Then, \eqref{comp:weak}, \eqref{comp:weak:W}, \eqref{comp:time}, \eqref{comp:u} and \eqref{comp:q} are consequences of the uniform estimate \eqref{unif:bdd} and standard compactness results in Bochner spaces. For \eqref{comp:H2} we argue similarly as in \cite[Lem.~3.1]{Barrett}, where for $\eta \in L^2(0,T;H^2(\Omega))$, it holds that 
\begin{align*}
 & \int_0^T ( \Delta_h \vp_h^{\tau,\pm}, \eta) \dt  = \int_0^T (\Delta_h \vp_h^{\tau,\pm}, (1 - \Ih)(\eta)) \dt + \int_0^T (\Delta_h \vp_h^{\tau,\pm}, \Ih(\eta)) \dt \\
& \quad = \int_0^T (\Delta_h \vp_h^{\tau,\pm}, (1 - \Ih)(\eta)) \dt + \int_0^T (\nabla \vp_h^{\tau,\pm}, \nabla [(1 -  \Ih)(\eta)]) \dt - \int_0^T (\nabla \vp_h^{\tau,\pm}, \nabla \eta) \dt \\
& \quad =: A_1 + A_2 - \int_0^T (\nabla \vp_h^{\tau,\pm}, \nabla \eta) \dt.
\end{align*}
By \eqref{Interpol} and the uniform bound \eqref{unif:bdd} on $\Delta_h \vp_h^{\tau, \pm}$, we see that 
\begin{align*}
| A_1 | & \leq C \| (1 - \Ih)(\eta) \|_{L^2(0,T;L^2)} \leq C h^2 \| \eta \|_{L^2(0,T;H^2)}, \\
| A_2 | & \leq C \| \nabla [(1 - \Ih)(\eta)] \|_{L^2(0,T;L^2)} \leq Ch \| \eta \|_{L^2(0,T;H^2)}.
\end{align*}
Together with \eqref{comp:weak} we deduce that 
\[
\int_0^T (\Delta_h \vp_h^{\tau,\pm}, \eta) \dt \to - \int_0^T (\nabla \vp, \nabla \eta) \dt 
\]
along a non-relabelled subsequence $(h,\tau) \to (0,0)$. Using the denseness of $L^2(0,T;H^2(\Omega))$ in $L^2(0,T;H^1(\Omega))$ we infer that $\Delta \vp \in L^2(0,T;L^2(\Omega))$, and by elliptic regularity on bounded convex domains \cite[Thm.~2.4.2.7]{Grisvard} we obtain that $\vp \in L^2(0,T;H^2_n(\Omega))$. An analogous argument shows $\theta \in L^2(0,T;H^2_n(\Omega))$.

With the compact embedding $W^{1,r}(\Omega) \subset \subset C^{0,s}(\overline{\Omega})$ for $s \in [0,1)$ if $d = 2$ and $s \in [0,\frac{1}{2})$ if $d = 3$, substituting $X = W^{1,r}(\Omega)$, $B = C^{0,s}(\overline{\Omega})$ and $Y = L^2(\Omega)$ in \eqref{Simon:1}, we obtain the strong convergence of $\vp_h^{\tau}$ (resp.~$\theta_h^{\tau}$) to $\vp$ (resp.~$\theta$) in $L^2(0,T;C^{0,s}(\overline{\Omega}))$. On the other hand, the estimate \eqref{time:trans} fulfils the requirement \eqref{Simon:2} for $\vp_h^{\pm}$ (resp.~$\theta_h^{\pm}$). This establishes the strong convergence for $\vp_h^{\pm}$ (resp.~$\theta_h^{\pm}$) to $\vp$ (resp.~$\theta$) in $L^2(0,T;C^{0,s}(\overline{\Omega}))$, whence along a further non-relabelled subsequence we also have the a.e.~convergence in $Q$, i.e., \eqref{comp:str}. 

Lastly, by \eqref{ass:W} there exists a positive constant $C$ such that 
\begin{align}
|W(s_1) - W(s_2)| & \leq C \Big ( 1 + |s_1|^3 + |s_2|^3 \Big ) |s_1 - s_2| \quad \forall s_1, s_2 \in \R, \label{W:diff} \\
|W'(s_1) - W'(s_2)| & \leq C \Big ( 1 + |s_1|^2 + |s_2|^2 \Big ) |s_1 - s_2| \quad \forall s_1, s_2 \in \R. \label{W':diff}
\end{align}
Then, by the definition \eqref{defn:Q} of $Q$, it is easy to see that 
\begin{equation}\label{Q:diff}
\begin{aligned}
|Q(\vp_h^{\tau, \pm}) - Q(\vp)| & \leq \frac{\| W(\vp_h^{\tau,\pm}) - W(\vp) \|_{L^1}}{Q(\vp_h^{\tau,\pm}) + Q(\vp)} \\
& \leq C \Big ( 1 + \| \vp_h^{\tau,\pm} \|_{H^1}^3 + \| \vp \|_{H^1}^3 \Big ) \| \vp_h^{\tau,\pm} - \vp \| \leq C \| \vp_h^{\tau,\pm} - \vp \|.
\end{aligned}
\end{equation}
Invoking the strong convergence of $\vp_h^{\tau,\pm}$ to $\vp$ in $L^2(0,T;L^2(\Omega))$, we deduce the strong convergence of $Q(\vp_h^{\tau,\pm})$ to $Q(\vp)$ in $L^2(0,T)$, i.e., \eqref{comp:Q}. 

For the attainment of initial conditions, let $\zeta \in C^1([0,T];H^1(\Omega))$ with $\zeta(T) = 0$ be arbitrary. Then, by integrating by parts in time,
\[
(\vp_h^0, \zeta(0)) = - \int_0^T (\pd_t \vp_h^\tau, \zeta) \dt - \int_0^T (\vp_h^\tau, \pd_t \zeta) \dt.
\]
Passing to the limit $(h,\tau) \to (0,0)$ we deduce with the help of \eqref{comp:weak} and \eqref{comp:time} that 
\[
(\vp_0, \zeta(0)) = - \int_0^T (\pd_t \vp,  \zeta) \dt - \int_0^T (\vp, \pd_t \zeta) \dt = (\vp(0), \zeta(0)),
\]
where for the right-most equality we applied integration by parts in time. This leads to the identification $\vp(0) = \vp_0$. The identification $\theta(0) = \theta_0$ can be achieved analogously.
\end{proof}

\begin{thm}[Convergence]\label{thm:conv}
Under assumptions \eqref{ass:dom}-\eqref{ass:Ec}, the functions $\vp$, $\theta$, $\bu$ and $q$ obtained from Proposition \ref{prop:comp} is a weak solution to \eqref{model} in the sense of \eqref{model:weak} holding for arbitrary $\psi \in L^2(0,T; H^1(\Omega))$ and $\bv \in L^2(0,T;X(\Omega))$.  Moreover, we have the identification
\[
q(t) = Q(\vp(t)) = \Big ( \int_\Omega W(\vp(t)) \dx + 1 \Big )^{1/2}
\]
holding for a.e.~$t \in (0,T)$ with $q(0) = Q(\vp_0)$.
\end{thm}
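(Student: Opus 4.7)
The plan is to pass to the limit $(h,\tau) \to (0,0)$ in the fully discrete weak formulation \eqref{cts:time} and in \eqref{SAV:3} using the convergences collected in Proposition~\ref{prop:comp}, followed by a density argument on test functions. Given $\psi \in C^1([0,T]; C^2(\overline{\Omega}))$ (dense in $L^2(0,T; H^1(\Omega))$), set $\psi_h := \Ih(\psi)$; then \eqref{Interpol}--\eqref{Interpol:Linfty} yield $\psi_h \to \psi$ strongly in $L^2(0,T; W^{1,\infty}(\Omega))$. For $\bv \in C^1([0,T]; C^\infty_c(\Omega, \R^d))$, the same construction gives $\bv_h := \Ih(\bv) \in \bm{\mathcal{S}}_{h,0}$ with analogous convergence. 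The linear time-derivative and diffusion terms in \eqref{cts:SAV:1}--\eqref{cts:SAV:2} pass to the limit by pairing the weak convergences \eqref{comp:weak}, \eqref{comp:time}, \eqref{comp:weak:W} against the strong convergence of $\psi_h$.

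For the coupling terms $(\theta_h^+ - \theta_c) p(\vp_h^-) \psi_h$ and $p(\vp_h^-) \pd_t \vp_h^\tau \psi_h$, assumption \eqref{ass:p} together with \eqref{comp:str} gives $p(\vp_h^-) \to p(\vp)$ uniformly in space for a.e.\ $t$ and strongly in $L^2(0,T; C^0(\overline{\Omega}))$, so the remaining factors are handled by \eqref{comp:weak} or \eqref{comp:time}. The delicate nonlinearity is $\frac{q_h^+ W'(\vp_h^-)}{Q(\vp_h^-)} \psi_h$. Using the local Lipschitz bound $|W'(a) - W'(b)| \leq C(1 + |a|^2 + |b|^2)|a - b|$ from \eqref{W':diff}, H\"older's inequality, and the uniform $L^\infty(0,T; L^6(\Omega))$-bound on $\vp_h^-$ (via $H^1 \hookrightarrow L^6$), combined with the strong convergence $\vp_h^- \to \vp$ in $L^2(0,T; L^6(\Omega))$ inferred from \eqref{comp:str}, we obtain $W'(\vp_h^-) \to W'(\vp)$ strongly in $L^2(\Omega_T)$. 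Since $Q(\vp_h^-) \geq 1$ and $|1/Q(\vp_h^-) - 1/Q(\vp)| \leq |Q(\vp_h^-) - Q(\vp)|$, the strong $L^2(0,T)$ convergence \eqref{comp:Q} upgrades to strong convergence of the ratio $W'(\vp_h^-)/Q(\vp_h^-)$ in $L^2(\Omega_T)$; pairing with the weak* convergent scalar $q_h^+$ from \eqref{comp:q} and strongly convergent $\psi_h$ yields the limit.

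For the elasticity equation \eqref{cts:SAV:3}, the strong convergences \eqref{comp:str} of $\vp_h^+$ and $\theta_h^+$ in $L^2(0,T; C^0(\overline{\Omega}))$ together with $k \in C^{1,1}$ and $m \in W^{1,\infty}$ (assumptions \eqref{ass:CC}, \eqref{ass:Ec}), the uniform bound \eqref{nodal:bdd}, and the estimate \eqref{Interpol2} imply $\Ih[\CC(\vp_h^+)] \to \CC(\vp)$ and $\Ih[\CC(\vp_h^+)(m(\vp_h^+) - \beta(\theta_h^+ - \theta_h^0))\mathbb{I}] \to \CC(\vp)(m(\vp) - \beta(\theta - \theta_0))\mathbb{I}$ a.e.\ and boundedly in $L^\infty(\Omega_T)$. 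Paired against the weak $L^2$ convergence \eqref{comp:u} of $\E(\bu_h^+)$ and the strong $L^2(\Omega_T)$ convergence of $\E(\bv_h)$, the third equation in \eqref{model:weak} is recovered.

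Finally, to identify $q = Q(\vp)$, rewrite \eqref{SAV:3} on each subinterval as $\pd_t q_h^\tau = \frac{1}{2\eps Q(\vp_h^-)} \int_\Omega W'(\vp_h^-) \pd_t \vp_h^\tau \dx$. The left side converges weakly in $L^2(0,T)$ to $q'$ by \eqref{comp:q} and the bound in \eqref{unif:bdd}; on the right, the strong $L^2(\Omega_T)$ convergence of $W'(\vp_h^-)/Q(\vp_h^-)$ pairs with the weak $L^2(\Omega_T)$ convergence of $\pd_t \vp_h^\tau$ to produce $\frac{1}{2\eps Q(\vp)} \int_\Omega W'(\vp) \pd_t \vp \dx$. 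By the chain rule for $t \mapsto Q(\vp(t))$ (valid since $\vp \in H^1(0,T; L^2) \cap L^\infty(0,T; L^6)$), this equals $\tfrac{d}{dt} Q(\vp)$, and combined with $q(0) = \lim_{h \to 0} Q(\vp_h^0) = Q(\vp_0) = Q(\vp(0))$, integration yields $q = Q(\vp)$ a.e. The main obstacle is the SAV quotient: the interplay between the weakly convergent scalar $q_h^+$ and the nonlinear spatial factor requires upgrading a.e.\ convergence of $W'(\vp_h^-)$ to strong Bochner $L^2$ convergence via polynomial control and Sobolev embedding, and subsequently combining this with the weak convergence of $\pd_t \vp_h^\tau$ to pass to the limit in the product.
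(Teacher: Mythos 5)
Your proposal is correct and follows essentially the same route as the paper: interpolated smooth test functions, weak--strong pairings term by term, polynomial-growth/Sobolev estimates for the quotient $W'(\vp_h^-)/Q(\vp_h^-)$ combined with the weak* convergence of $q_h^+$, and a (generalized) dominated convergence argument for the interpolated elasticity tensor, followed by density. The one point to tighten is your claim that $\pd_t q_h^\tau$ is bounded in $L^2(0,T)$ ``by \eqref{unif:bdd}'': that estimate only gives $\sum_n |q_h^n - q_h^{n-1}|^2 \leq C$, i.e.\ $\|\pd_t q_h^\tau\|_{L^2(0,T)}^2 \leq C/\tau$, but the needed uniform bound follows instead directly from \eqref{SAV:3}, since $|q_h^n - q_h^{n-1}|/\tau \leq C\|W'(\vp_h^{n-1})\|\,\|\pd_t \vp_h^\tau\|$ with $\|W'(\vp_h^{n-1})\|$ uniformly bounded and $\pd_t\vp_h^\tau \in L^2(0,T;L^2(\Omega))$ uniformly; with that repair your sketch of the identification $q = Q(\vp)$ (which the paper delegates to the cited references) is sound.
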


\begin{proof}
Let $\zeta \in C^0([0,T];C^\infty(\overline{\Omega}))$ be an arbitrary test function.  We choose $\psi_h = \Ih(\zeta)$ in \eqref{cts:SAV:1} and \eqref{cts:SAV:2}, keeping in mind that $\Ih(\zeta) \to \zeta$ in $L^2(0,T;H^1(\Omega))$. Then, for the time derivative term in \eqref{cts:SAV:1}, we have
\begin{align*}
\Big | \int_0^T (\pd_t \vp_h^\tau, \Ih(\zeta)) - (\pd_t \vp, \zeta) \dt \Big | & \leq  \Big | \int_0^T (\pd_t \vp_h^\tau,   \Ih(\zeta) - \zeta ) \dt \Big | + \Big | \int_0^T (\pd_t \vp_h^\tau - \pd_t \vp, \zeta) \dt \Big |  \\
& \to 0 \text{ as } (h,\tau) \to (0,0)
\end{align*}
due to the uniform estimate \eqref{unif:bdd}, the weak convergence of $\pd_t \vp_h^\tau$ in $L^2(0,T;L^2(\Omega))$, and \eqref{Interpol}.  For the coupled term in \eqref{cts:SAV:1}, we have similarly
\begin{align*}
& \Big |  \int_0^T ((\theta_h^+ - \theta_c) p(\vp_h^-), \Ih(\zeta)) - ((\theta - \theta_c)p(\vp), \zeta) \dt \Big | \\
& \quad \leq  \Big | \int_0^T ((\theta_h^+ - \theta_c) p(\vp_h^-), \Ih(\zeta) - \zeta) \dt \Big |  +  \Big | \int_0^T ((\theta_h^+ - \theta_c)p(\vp_h^-) - ((\theta - \theta_c)p(\vp)), \zeta) \dt \Big |  \\
& \quad =: B_1 + B_2 \to 0 \text{ as } (h,\tau) \to (0,0).
\end{align*}
Indeed, invoking \eqref{Interpol}, the boundedness of $p$ and \eqref{unif:bdd} we see that 
\begin{align*}
B_1 \leq C \| \theta_h^+ - \theta_c \|_{L^2(0,T;L^2)} \| \Ih(\zeta) - \zeta \|_{L^2(Q)} \leq Ch^2 \| \zeta \|_{L^2(0,T;H^2)},
\end{align*}
and so $B_1 \to 0$ as $(h, \tau) \to (0,0)$. On the other hand, using the boundedness and continuity of $p$, the a.e.~convergence of $\vp_h^-$ to $\vp$, invoking the dominated convergence theorem, we infer that $p(\vp_h^-) \to p(\vp)$ strongly in $L^2(0,T;L^2(\Omega))$. Hence, together with the weak convergence of $\theta_h^+$ in $L^2(0,T;L^2(\Omega))$, we deduce that $B_2 \to 0$ as $(h,\tau) \to (0,0)$.  Then, via a similar argument, for \eqref{cts:SAV:2} we infer that as $(h,\tau) \to (0,0)$,
\begin{align*}
 \int_0^T ( \pd_t \theta_h^\tau -  p(\vp_h^-) \pd_t \vp_h^\tau, \Ih(\zeta)) \dt \to \int_0^T ( \pd_t \theta -  p(\vp) \pd_t \vp, \zeta) \dt.
\end{align*}
Meanwhile for the gradient term in \eqref{cts:SAV:1}, it holds that 
\begin{align*}
& \Big | \int_0^T (\nabla \vp_h^+, \nabla \Ih(\zeta)) - (\nabla \vp, \nabla \zeta) \dt \Big | \\
& \quad \leq  \Big | \int_0^T (\nabla \vp_h^+, \nabla \Ih(\zeta) - \zeta) \dt \Big | + \Big | \int_0^T (\nabla \vp_h^+ - \nabla \vp, \nabla \zeta) \dt \Big | \\
& \quad \leq Ch^2\| \nabla \vp_h^+ \|_{L^2(0,T;L^2)} \| \zeta \|_{L^2(0,T;H^2)} + \Big | \int_0^T (\nabla \vp_h^+ - \nabla \vp, \nabla \zeta) \dt \Big | \to 0,
\end{align*}
as $(h,\tau) \to (0,0)$. The gradient term in \eqref{cts:SAV:2} can be treated analogously.

Turning now to \eqref{cts:SAV:3}, where for $\bm{\zeta} \in C^0([0,T];C^1(\overline{\Omega},\R^d))$ we choose $\bv_h = \Ih(\bm{\zeta})$. Then, using the boundedness and continuity of $\CC(\cdot)$, the a.e.~convergence of $\vp_h^+$ to $\vp$ and the dominated convergence theorem we have 
\begin{align}\label{DCT:s}
\| \CC(\vp_h^+) - \CC(\vp) \|_{L^s(0,T;L^s)} \to 0 \text{ as } (h,\tau) \to (0,0) \text{ for any } s < \infty.
\end{align}
By the norm equivalence \eqref{norm:equiv}, it holds that  
\begin{equation}\label{lim:C}
\begin{aligned}
& \| \Ih(\CC(\vp_h^+)) - \CC(\vp) \|_{L^2(0,T;L^2)} \\
& \quad \leq \| \Ih (\CC(\vp_h^+) - \CC(\vp))  \|_{L^2(0,T;L^2)} + \| \Ih(\CC(\vp)) - \CC(\vp) \|_{L^2(0,T;L^2)} \\
&  \quad \leq C \| \CC(\vp_h^+) - \CC(\vp) \|_{L^2(0,T;L^2)} + C\| \Ih(\CC(\vp)) - \CC(\vp) \|_{L^2(0,T;L^\infty)} \\
& \quad  \to 0 \text{ as } (h,\tau) \to (0,0),
\end{aligned}
\end{equation}
where on the right-hand side for the first term we used \eqref{DCT:s}, while for the second term we used the fact that $\vp \in L^2(0,T;C^0(\overline{\Omega}))$ and the property \eqref{Interpol:Linfty}. Hence, along a further non-relabelled subsequence,
\[
\Ih(\CC(\vp_h^+)) \to \CC(\vp) \text{ a.e.~in } \Omega_T \text{ as } (h,\tau) \to (0,0).
\]
Using the boundedness of $\CC(\cdot)$ once again and the fact that $\Ih(\bm{\zeta}) \to \bm{\zeta}$ strongly in $L^2(0,T;H^1(\Omega, \R^d))$, together with 
\begin{align*}
\Ih(\CC(\vp_h^+)) \E(\Ih(\bm{\zeta})) \to \CC(\vp) \E(\bm{\zeta}) & \text{ a.e.~in } \Omega_T \text{ as } (h,\tau) \to (0,0), \\
|\Ih(\CC(\vp_h^+)) \E(\Ih(\bm{\zeta}))| \leq C |\E(\Ih(\bm{\zeta}))| & \text{ a.e.~in } \Omega_T \text{ for all } h, \tau > 0, \\
C |\E(\Ih(\bm{\zeta}))| \to C |\E(\bm{\zeta})| & \text{ strongly in } L^2(0,T;L^2(\Omega)) \text{ as } (h,\tau) \to (0,0),
\end{align*}
by the generalised Lebesgue dominated convergence theorem we deduce that 
\[
\Ih(\CC(\vp_h^+)) \E(\Ih(\bm{\zeta})) \to \CC(\vp_h) \E(\bm{\zeta}) \text{ strongly in } L^2(0,T;L^2(\Omega)) \text{ as } (h,\tau) \to (0,0).
\]
Together with the weak convergence $\E(\bu_h^+)$ to $\E(\bu)$ in $L^2(0,T;L^2(\Omega))$ we have
\[
\int_{\Omega_T} \Big ( \Ih[\CC(\vp_h^+) ] \E(\bu_h^+), \E(\Ih(\bm{\zeta})) \Big ) \dxt \to \int_{\Omega_T} \Big ( \CC(\vp) \E(\bu), \E(\bm{\zeta}) \Big ) \dxt
\]
as $(h,\tau) \to (0,0)$. Similarly, by the boundedness and continuity of $m(\cdot)$, upon replacing $\CC(\vp_h^+)$ with $\CC(\vp_h^+) m(\vp_h^+)$ in \eqref{lim:C}, we immediately infer that $\Ih(\CC(\vp_h^+) m(\vp_h^+)) \to \CC(\vp) m(\vp)$ strongly in $L^2(0,T;L^2(\Omega))$, and
\[
\int_{\Omega_T} \Big ( \Ih[\CC(\vp_h^+)m(\vp_h^+) \mathbb{I} ], \E(\Ih(\bm{\zeta})) \Big ) \dxt \to \int_{\Omega_T} \Big ( \CC(\vp) m(\vp) \mathbb{I}, \E(\bm{\zeta}) \Big ) \dxt
\]
as $(h,\tau) \to (0,0)$.  On the other hand, employing \eqref{DCT:s}, the uniform estimate \eqref{unif:bdd} for $\theta_h^+$, the strong convergence of $\theta_h^+$ to $\theta$ in $L^2(\Omega_T)$, and \eqref{Ritz1} for $\theta_h^0 = R_h(\theta_0)$, we deduce that 
\begin{align*}
& \| \CC(\vp_h^+) (\theta_h^+ - \theta_h^0) - \CC(\vp)  (\theta - \theta_0) \|_{L^2(0,T;L^2)} \\
& \quad \leq C \| \theta_h^+ - \theta_h^0 \|_{L^4(0,T;L^4)} \| \CC(\vp_h^+) - \CC(\vp) \|_{L^4(0,T;L^4)} + C \| (\theta_h^+ - \theta_h^0) - (\theta - \theta_0) \|_{L^2(0,T;L^2)} \\
& \quad \to 0 \text{ as } (h,\tau) \to (0,0).
\end{align*}
Then, analogous to \eqref{lim:C} we have for $Y_h^+ := \CC(\vp_h^+) (\theta_h^+ - \theta_h^0)$ and $Y := \CC(\vp)(\theta - \theta_0)$,
\begin{align*}
& \| \Ih(\CC(\vp_h^+)(\theta_h^+ - \theta_h^0)) - \CC(\vp)(\theta - \theta_0) \|_{L^2(0,T;L^2)} \\
& \quad \leq \| \Ih(Y_h^+ - Y) \|_{L^2(0,T;L^2)} + \| \Ih(Y) - Y\|_{L^2(0,T;L^2)} \\
& \quad \leq C \| Y_h^+ - Y \|_{L^2(0,T;L^2)} + C \| \Ih(Y) - Y \|_{L^2(0,T;L^\infty)} \\
& \quad \to 0 \text{ as } (h,\tau) \to (0,0),
\end{align*}
on account of the fact that $\vp, \theta \in L^2(0,T;C^0(\overline{\Omega}))$ and $\theta_0 \in H^2_n(\Omega) \subset C^0(\overline{\Omega})$. This yields 
\[
\int_{\Omega_T} \Big ( \Ih[\CC(\vp_h^+) \beta (\theta_h^+ - \theta_h^0) \mathbb{I} ], \E(\Ih(\bm{\zeta})) \Big ) \dxt \to \int_{\Omega_T} \Big ( \CC(\vp) \beta(\theta - \theta_0) \mathbb{I}, \E(\bm{\zeta}) \Big ) \dxt
\]
as $(h,\tau) \to (0,0)$. It remains to consider the term involving the scalar auxiliary variable in \eqref{cts:SAV:1}, where we have
\begin{align*}
& \Big | \int_0^T \frac{q^+_h}{Q(\vp_h^-)} (W'(\vp_h^-), \Ih(\zeta)) - \frac{q}{Q(\vp)} (W'(\vp), \zeta) \dt \Big | \\
& \quad \leq \Big | \int_0^T \frac{q^+_h}{Q(\vp_h^-)} (W'(\vp_h^-), \Ih(\zeta) - \zeta) \dt \Big | + \Big | \int_0^T q^+_h \Big ( \frac{1}{Q(\vp_h^-)} - \frac{1}{Q(\vp)} \Big ) (W'(\vp_h^-), \zeta) \dt \Big | \\
& \qquad + \Big | \int_0^T \frac{q^+_h}{Q(\vp)} (W'(\vp_h^-) - W'(\vp), \zeta) \dt \Big | + \Big | \int_0^T (q^+_h - q) \frac{(W'(\vp), \zeta)}{Q(\vp)} \dt \Big | \\
& \quad =: D_1 + D_2 + D_3 + D_4.
\end{align*}
Using the uniform estimate \eqref{unif:bdd}, lower bound on $Q$, interpolation property \eqref{Interpol}, and the fact that $W'(\vp_h^-)$ is bounded uniformly in $L^\infty(0,T;L^2(\Omega))$ deduced from the growth assumption \eqref{ass:W} on $W'$, we infer that
\begin{align*}
D_1  \leq C \| W'(\vp_h^-) \|_{L^\infty(0,T;L^2)} \| \Ih(\zeta) - \zeta \|_{L^2(0,T;L^2)} \leq Ch^2 \to 0,
\end{align*}
as $(h,\tau) \to (0,0)$. On the other hand, using \eqref{W:diff} and \eqref{W':diff}, and the uniform estimate \eqref{unif:bdd} we infer that
\begin{align}
\| W(\vp_h^-) - W(\vp) \|_{L^1} & \leq C \Big (1 + \| \vp_h^- \|_{H^1}^3 + \| \vp \|_{H^1}^3 \Big ) \| \vp_h^- - \vp \| \leq C \| \vp_h^- - \vp \|, \label{W:diff:L1} \\
\| W'(\vp_h^-) - W'(\vp) \|_{L^\frac{6}{5}} & \leq C \Big ( 1 + \| \vp_h^- \|_{L^6}^2 + \| \vp \|_{L^6}^2 \Big ) \| \vp_h^- - \vp \| \leq C \| \vp_h^- - \vp \|, \label{W':diff:L65}
\end{align}
and so we obtain the estimates
\begin{align*}
D_2 & \leq C \| W'(\vp_h^-) \|_{L^\infty(0,T;L^2)} \| \zeta \|_{L^2(0,T;L^2)} \| W(\vp_h^-) - W(\vp) \|_{L^2(0,T;L^1)} \\
& \leq C \| \vp_h^- - \vp \|_{L^2(0,T;L^2)}, \\
D_3 & \leq C \| W'(\vp_h^-) - W'(\vp) \|_{L^2(0,T;L^\frac{6}{5})} \| \zeta \|_{L^2(0,T;H^1)}  \\
& \leq C \| \vp_h^- - \vp \|_{L^2(0,T;L^2)},
\end{align*}
which in turn implies $D_2, D_3 \to 0$ as $(h,\tau) \to (0,0)$ thanks to the strong convergence \eqref{comp:str}.  Lastly, $D_4 \to 0$ as $(h,\tau) \to (0,0)$ due to the weak* convergence \eqref{comp:q} and the fact that $\frac{(W'(\vp), \zeta)}{Q(\vp)} \in L^1(0,T)$.  Hence, we have
\[
 \int_0^T \frac{q^+_h}{Q(\vp_h^-)} (W'(\vp_h^-), \Ih(\zeta)) \dt \to \int_0^T \frac{q}{Q(\vp)} (W'(\vp), \zeta) \dt 
\]
as $(h,\tau) \to (0,0)$.

Passing to the limit $(h,\tau) \to (0,0)$ in \eqref{cts:time} and using a standard density argument shows that the limit functions $(\vp, \theta, q, \bu)$ satisfy \eqref{weak:SAV:1}-\eqref{weak:SAV:3} for a.e.~$t \in (0,T)$ and for arbitrary test functions $\psi \in H^1(\Omega)$ and $\bm{v} \in X(\Omega)$. The identification $q(t) = Q(\vp(t))$ for a.e.~$t \in (0,T)$ can be adapted analogously from \cite{LamWangSAV,MetzgerSAV}, so that \eqref{weak:SAV:1} is equivalent to \eqref{original:weak:1} and the limit functions $(\vp, \theta, \bu)$ is a weak solution to \eqref{model} in the sense of \eqref{model:weak}.
\end{proof}

\section{Well-posedness and regularity of solutions}\label{sec:wellposed}
Theorem \ref{thm:conv} provides the existence of weak solutions for the model \eqref{model}. In this section we complete the well-posedness of \eqref{model} by establishing continuous dependence on initial data and derive higher regularity for the solutions.

\begin{thm}[Continuous dependence on initial data]\label{thm:cts}
Let $\{(\vp_i, \theta_i, \bu_i)\}_{i=1,2}$ denote weak solutions to \eqref{model} in the sense of \eqref{model:weak} corresponding to initial conditions $\{(\vp_{0,i}, \theta_{0,i})\}_{i=1,2}$, respectively. Then, there exists a positive constant $C$ independent of the differences $\hat{\vp}:= \vp_1 -\vp_2$, $\hat{\theta} := \theta_1 -\theta_2$ and $\hat{\bu} = \bu_1 - \bu_2$, such that 
\begin{align*}
\sup_{t \in (0,T]} \Big ( \| \hat{\vp} \|_{H^1}^2 + \| \hat{\theta} \|^2 \Big ) + \int_0^T \| \hat{\vp} \|_{H^2}^2 + \| \hat{\theta} \|_{H^1}^2 + \| \pd_t \hat{\vp} \|^2 + \| \hat{\bu} \|_{H^1}^2 \dt \leq C \Big (\| \hat{\vp}_0 \|_{H^1}^2 + \| \hat{\theta}_0 \|^2 \Big ).
\end{align*}
Consequently, the solution to \eqref{model} is unique.
\end{thm}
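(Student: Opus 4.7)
The plan is to subtract the weak formulations \eqref{model:weak} for the two solutions, produce equations for the differences $\hat{\vp}$, $\hat{\theta}$ and $\hat{\bu}$, and then assemble a Gr\"onwall inequality for $\|\hat{\vp}\|_{H^1}^2 + \|\hat{\theta}\|^2$. The essential inputs are the a priori regularities $\vp_i, \theta_i \in L^\infty(0,T;H^1) \cap L^2(0,T;H^2) \cap H^1(0,T;L^2)$ and $\bu_i \in L^\infty(0,T;X(\Omega))$ furnished by Theorem~\ref{thm:conv}, the local Lipschitz bound $|W'(s_1)-W'(s_2)| \leq C(1+|s_1|^2+|s_2|^2)|s_1-s_2|$ inherited from \eqref{ass:W}, and the global Lipschitz continuity of $p$, $k$ (hence $\CC$), and $m$ from \eqref{ass:p}, \eqref{ass:CC}, \eqref{ass:Ec}. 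The Sobolev embedding $H^1 \hookrightarrow L^6$ in $d \leq 3$ keeps the cubic nonlinearity under control.

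First I would test the $\hat{\vp}$-equation with both $\hat{\vp}$ and $\pd_t\hat{\vp}$ (the latter admissible since $\pd_t \vp_i \in L^2(0,T;L^2)$), generating $\tfrac{d}{dt}\|\hat{\vp}\|_{H^1}^2$, $\alpha \|\pd_t\hat{\vp}\|^2$ and $\lambda\eps\|\nabla\hat{\vp}\|^2$ on the left. The Lipschitz estimate for $W'$ combined with $\vp_i \in L^\infty(0,T;L^6)$ gives $\|W'(\vp_1)-W'(\vp_2)\| \leq C\|\hat{\vp}\|_{L^6} \leq C\|\hat{\vp}\|_{H^1}$. Splitting the coupling as $(\theta_1-\theta_c)p(\vp_1) - (\theta_2-\theta_c)p(\vp_2) = p(\vp_1)\hat{\theta} + (\theta_2-\theta_c)(p(\vp_1)-p(\vp_2))$, the second piece is bounded in $L^2$ by $\|\theta_2-\theta_c\|_{L^3}\|\hat{\vp}\|_{L^6} \leq C\|\hat{\vp}\|_{H^1}$. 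Testing the $\hat{\theta}$-equation with $\hat{\theta}$ produces $\tfrac{\delta}{2}\tfrac{d}{dt}\|\hat{\theta}\|^2 + \|\nabla\hat{\theta}\|^2$ on the left against $\gamma(p(\vp_1)\pd_t\hat{\vp},\hat{\theta}) + \gamma((p(\vp_1)-p(\vp_2))\pd_t\vp_2, \hat{\theta})$ on the right.

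The main obstacle is the second temperature term, because $\pd_t\vp_2$ is only $L^2$ in time and $\hat{\vp}$ is not yet known to be essentially bounded. I would estimate it via H\"older with exponents $(3,2,6)$ and the Gagliardo--Nirenberg inequality $\|\hat{\vp}\|_{L^3}^2 \leq C\|\hat{\vp}\|\|\hat{\vp}\|_{H^1}$ to obtain
\[
|\gamma((p(\vp_1)-p(\vp_2))\pd_t\vp_2,\hat{\theta})| \leq \eta\|\nabla\hat{\theta}\|^2 + C\|\pd_t\vp_2\|^2\bigl(\|\hat{\vp}\|^2+\|\hat{\vp}\|_{H^1}^2\bigr),
\]
so that $t \mapsto \|\pd_t\vp_2(t)\|^2$ enters as an $L^1(0,T)$ weight rather than an $L^\infty$ factor. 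Summing all tested identities and absorbing the $\eta\|\pd_t\hat{\vp}\|^2$ and $\eta\|\nabla\hat{\theta}\|^2$ contributions into the good terms yields a differential inequality
\[
\tfrac{d}{dt}\bigl(\|\hat{\vp}\|_{H^1}^2+\|\hat{\theta}\|^2\bigr) + c_\ast\bigl(\|\pd_t\hat{\vp}\|^2+\|\nabla\hat{\vp}\|^2+\|\nabla\hat{\theta}\|^2\bigr) \leq C\bigl(1+\|\pd_t\vp_2\|^2\bigr)\bigl(\|\hat{\vp}\|_{H^1}^2+\|\hat{\theta}\|^2\bigr),
\]
to which Gr\"onwall applies, delivering the supremum estimates and the $L^2(0,T)$ bounds on $\pd_t\hat{\vp}$, $\nabla\hat{\vp}$ and $\nabla\hat{\theta}$.

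Finally, $\|\hat{\vp}\|_{L^2(0,T;H^2)}$ follows by reading the phase equation as $-\lambda\eps\Lap\hat{\vp} = -\alpha\pd_t\hat{\vp} - \tfrac{\lambda}{\eps}(W'(\vp_1)-W'(\vp_2)) - \gamma[(\theta_1-\theta_c)p(\vp_1)-(\theta_2-\theta_c)p(\vp_2)]$ and invoking elliptic regularity for the Neumann--Laplacian on the convex polytope $\Omega$ under \eqref{ass:dom}, since every right-hand side term is already controlled in $L^2(0,T;L^2)$. For $\hat{\bu}$, subtracting the two elasticity identities and testing with $\hat{\bu}$, then using the uniform coercivity of $\CC(\vp_1)$ and Korn's inequality, gives pointwise in $t$
\[
\|\hat{\bu}\|_{H^1} \leq C\bigl(\|\hat{\vp}\|_{L^\infty}\|\bu_2\|_{H^1} + \|\hat{\vp}\|_{H^1} + \|\hat{\theta}\| + \|\hat{\theta}_0\|\bigr),
\]
where the $\|\hat{\vp}\|_{L^\infty}$ factor is absorbed by means of the Sobolev embedding $H^2 \hookrightarrow L^\infty$ in $d\leq 3$ together with the $\hat{\vp} \in L^2(0,T;H^2)$ bound just obtained, and $\|\bu_2\|_{H^1}$ is controlled uniformly in time by Theorem~\ref{thm:conv}. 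Squaring and integrating in $t$ closes the argument, and uniqueness is the special case $\hat{\vp}_0 = 0$, $\hat{\theta}_0 = 0$.
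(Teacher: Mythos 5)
Your proposal is correct and follows essentially the same route as the paper: the same test functions with the same cancellation of the $(p(\vp_1)\hat{\theta},\pd_t\hat{\vp})$ terms, the same Gr\"onwall weight $\|\pd_t\vp_2\|^2$ entering as an $L^1(0,T)$ factor, and the same treatment of the elasticity difference via coercivity of $\CC(\vp_1)$, control of $\|\hat{\vp}\|_{L^\infty}$ through the $H^2$ bound, and Korn's inequality. The only cosmetic differences are the placement of the H\"older exponents in the $((p(\vp_1)-p(\vp_2))\pd_t\vp_2,\hat{\theta})$ term and that you derive the $L^2(0,T;H^2)$ bound for $\hat{\vp}$ by comparison plus elliptic regularity rather than by testing with $-\Delta\hat{\vp}$, which is equivalent.
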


\begin{proof}
We first consider the equation for $\hat{\vp}$, which reads in strong form as
\[
 \pd_t \hat{\vp} -   \Delta \hat{\vp} +  p(\vp_1) \hat{\theta} = - (W'(\vp_1) - W'(\vp_2)) -  (\theta_2 - \theta_c)(p(\vp_1) - p(\vp_2)).
\]
Testing this with $\hat{\vp}$, $\pd_t \hat{\vp}$ and $-\Delta \hat{\vp}$, then using the boundedness and Lipschitz continuity of $p$, as well as the inequality \eqref{W':diff} for $W'$ yields
\begin{align}
& \frac{1}{2} \frac{d}{dt} \| \hat{\vp} \|^2 +  \| \nabla \hat{\vp} \|^2 \leq C \Big ( 1 + \| \theta_2 \|_{L^\infty} + \| \hat{\theta} \| + \| \vp_i \|_{L^\infty}^2 \| \hat{\vp} \| \Big ) \| \hat{\vp} \|, \label{cts:1} \\
&  \| \pd_t \hat{\vp} \|^2 + \frac{1}{2} \frac{d}{dt} \| \nabla \hat{\vp} \|^2 + (p(\vp_1) \hat{\theta}, \pd_t \hat{\vp}) \leq C \Big ( 1 + \| \theta_2 \|_{L^\infty} + \| \vp_i \|_{L^\infty}^2 \Big ) \| \hat{\vp} \| \| \pd_t \hat{\vp} \|, \label{cts:2} \\
& \frac{1}{2} \frac{d}{dt} \| \nabla \hat{\vp} \|^2 +  \| \Delta \hat{\vp} \|^2 \leq C \Big ( 1 + \| \theta_2 \|_{L^\infty} + \| \vp_i \|_{L^\infty}^2 \Big ) \| \hat{\vp} \|  \| \Delta \hat{\vp} \| + C \| \hat{\theta} \|  \| \Delta \hat{\vp} \|. \label{cts:3}
\end{align}
Next, testing the equation for $\hat{\theta}$, which reads in strong form
\[
 \pd_t \hat{\theta}  -  p(\vp_1) \pd_t \hat{\vp} - (p(\vp_1) - p(\vp_2)) \pd_t \vp_2 - \Delta \hat{\theta}= 0
\]
with $\hat{\theta}$ yields 
\begin{align}
& \frac{1}{2} \frac{d}{dt} \| \hat{\theta} \|^2 - ( p(\vp_1) \hat{\theta}, \pd_t \hat{\vp}) + \| \nabla \hat{\theta} \|^2 \leq C \| \pd_t \vp_2 \| \| \hat{\vp} \|_{L^6} \| \hat{\theta} \|_{L^3}. \label{cts:4}
\end{align}
Invoking the following Gagliardo--Nirenburg inequalities in three dimensions:
\begin{align*}
\| f \|_{L^\infty} \leq C \| f \|_{H^2}^{\frac{1}{2}} \| f \|_{L^6}^{\frac{1}{2}}, \quad \| f \|_{L^3} \leq C \| \nabla f \|_{L^2}^{\frac{1}{2}} \| f \|_{L^2}^{\frac{1}{2}} + C \| f \|_{L^2},
\end{align*}
then upon adding \eqref{cts:2} and \eqref{cts:4}, noting a cancellation, we arrive at
\begin{equation}\label{cts:7}
\begin{alignedat}{2}
& \frac{1}{2} \frac{d}{dt} \Big ( \| \nabla \hat{\vp} \|^2 +  \| \hat{\theta} \|^2 \Big ) + \| \nabla \hat{\theta} \|^2 +  \| \pd_t \hat{\vp} \|^2 \\
& \quad \leq C \Big ( 1 + \| \theta_2 \|_{H^2}^{\frac{1}{2}} + \| \vp_i \|_{H^2} \Big ) \| \hat{\vp} \| \| \pd_t \hat{\vp} \| + C \| \pd_t \vp_2 \| \| \hat{\vp} \|_{H^1} \Big (\| \hat{\theta} \|^{\frac{1}{2}} \| \nabla \hat{\theta} \|^{\frac{1}{2}} + \| \hat{\theta} \| \Big) \\
& \quad \leq \frac{1}{2} \| \pd_t \hat{\vp} \|^2 + \frac{1}{2} \| \nabla \hat{\theta} \|^2 + C \Big (1 + \| \theta_2 \|_{H^2} + \| \pd_t \vp_2 \|^2 + \| \vp_i \|_{H^2}^2 \Big ) \Big (\| \hat{\vp} \|_{H^1}^2 + \| \hat{\theta} \|^2 \Big ).
\end{alignedat}
\end{equation}
To \eqref{cts:7} we add \eqref{cts:1}, and after adjusting the constant prefactors we find that
\begin{align*}
& \frac{d}{dt} \Big ( \| \hat{\vp} \|_{H^1}^2 + \| \hat{\theta} \|^2 \Big ) + \| \nabla \hat{\theta} \|^2 + \| \pd_t \hat{\vp} \|^2 \\
& \quad \leq C \Big (1 + \| \theta_2 \|_{H^2} + \| \pd_t \vp_2 \|^2 + \| \vp_i \|_{H^2}^2 \Big ) \Big (\| \hat{\vp} \|_{H^1}^2 + \| \hat{\theta} \|^2 \Big ),
\end{align*}
where by invoking the regularity of solutions listed in Proposition \ref{prop:comp} and the application of Gronwall's inequality leads to the claim
\begin{align*}
\sup_{t \in (0,T]} \Big ( \| \hat{\vp}(t) \|_{H^1}^2 + \| \hat{\theta}(t) \|^2 \Big ) + \int_0^T \| \nabla \hat{\theta} \|^2 + \| \pd_t \hat{\vp} \|^2 \dt \leq C \Big (\| \hat{\vp}_0 \|_{H^1}^2 + \| \hat{\theta}_0 \|^2 \Big ).
\end{align*}
This furnishes the uniqueness of $\vp$ and $\theta$. Returning to \eqref{cts:3} we see that 
\begin{align*}
\frac{1}{2} \frac{d}{dt} \| \nabla \hat{\vp} \|^2 + \frac{1}{2} \| \Delta \hat{\vp} \|^2 \leq C \Big ( 1 + \| \theta_2 \|_{H^2} + \| \vp_i \|_{H^2}^2 \Big ) \Big ( \| \hat{\vp} \|^2 + \| \hat{\theta} \|^2 \Big ),
\end{align*}
whence by Gronwall's inequality, as well as elliptic regularity we deduce that 
\begin{align}\label{cts:H2}
\int_0^T \| \hat{\vp} \|_{H^2}^2 \dt \leq C \Big (\| \hat{\vp}_0 \|_{H^1}^2 + \| \hat{\theta}_0 \|^2 \Big ).
\end{align}
%Then, from \eqref{cts:5} and \eqref{cts:6} we have
%\begin{align*}
%\frac{\delta}{2} \| \pd_t \hat{\theta} \|^2 + \frac{1}{2} \| \Delta \hat{\theta} \|^2 + \frac{(1+\delta)}{2} \frac{d}{dt} \| \nabla \hat{\theta} \|^2 \leq C \|\pd_t \hat{\vp} \|^2 + C\| \pd_t \vp_2 \|^2 \| \hat{\vp} \|_{H^2} \| \hat{\vp} \|_{H^1}
%\end{align*}
%[Need L\infty L2 regularity for pd_t vp_2 - possible but need to prove first]
Turning now to the equation for $\hat{\bu}$, which reads as 
\begin{align*}
0 & = \int_{\Omega} \CC(\vp_1)(\E(\hat{\bu}) - (m(\vp_1) - m(\vp_2))\mathbb{I} +  \hat{\theta} \mathbb{I}) : \E(\bv) \dx \\
\notag & \quad + \int_\Omega (\CC(\vp_1) - \CC(\vp_2))(\E(\bu_2) - m(\vp_2) \mathbb{I} +  (\theta_2 - \theta_0) \mathbb{I}): \E(\bv) \dx
\end{align*}
for arbitrary $\bv \in L^2(0,T;X(\Omega))$. Choosing $\bv = \hat{\bu}$, then invoking the regularities stated in Proposition \ref{prop:comp} and applying lower bounds in \eqref{ass:CC}, as well as the Lipschitz continuity of $\CC$ and $m$ leads to 
\begin{align*}
\| \E(\hat{\bu}) \|^2 & \leq C \big ( \| \hat{\vp} \|^2 + \|\hat{\theta} \|^2 \big ) +  C \big ( 1+  \| \theta_2 \|^2 + \| \E(\bu_2) \|^2 \big ) \| \hat{\vp} \|_{L^\infty}^2 \\
& \leq C \big ( \| \hat{\vp} \|^2 + \|\hat{\theta} \|^2 + \| \hat{\vp} \|_{H^2} \| \hat{\vp} \|_{H^1} \big ) \leq C \big ( \| \hat{\vp} \|_{H^2}^2 + \| \hat{\theta} \|^2 \big ).
\end{align*}
By Korn's inequality and \eqref{cts:H2} we deduce that 
\[
\int_0^T \| \hat{\bu} \|_{H^1}^2 \dt \leq C \Big (\| \hat{\vp}_0 \|_{H^1}^2 + \| \hat{\theta}_0 \|^2 \Big ).
\]
This also provides uniqueness for $\bu$.
\end{proof}

\begin{thm}[Higher regularity]\label{thm:reg}
Suppose in addition to \eqref{ass:dom}-\eqref{ass:Ec}, it holds that 
\begin{enumerate}[label=$(\mathrm{A 8})$, ref = $\mathrm{A 8}$]
\item \label{ass:Reg} the initial conditions satisfy $\vp_0, \theta_0 \in H^3(\Omega)$.
\end{enumerate}
Then the weak solution $(\vp, \theta, \bu)$ to \eqref{model} satisfies the further regularities
\begin{align*}
\vp, \theta & \in L^\infty(0,T;H^2_n(\Omega) \cap W^{2,6}(\Omega)), \\
\pd_t \vp, \pd_t \theta & \in  L^2(0,T;H^2(\Omega)) \cap  L^{\infty}(0,T;H^1(\Omega)), \\
\pd_{tt} \vp, \pd_{tt} \theta & \in L^2(0,T;L^2(\Omega)), \\
\bu & \in W^{1,4}(0,T;X(\Omega)).
\end{align*}
\end{thm}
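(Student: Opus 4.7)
The plan is to establish the claimed regularity in four cascading stages: identify the initial values of $\pd_t\vp$ and $\pd_t\theta$; derive time-derivative estimates by differentiating the PDE and performing layered energy estimates; bootstrap the spatial regularity of $\vp, \theta$ via elliptic regularity; and finally treat $\bu$ through the time-differentiated elasticity equation.

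First I would evaluate \eqref{model:1}--\eqref{model:2} at $t=0$ and use $\vp_0, \theta_0 \in H^3(\Omega) \hookrightarrow W^{1,\infty}(\Omega)$ (valid in $d \in \{2,3\}$), together with \eqref{ass:W} and \eqref{ass:p}, to identify $\pd_t\vp(0), \pd_t\theta(0) \in H^1(\Omega)$. Next I would differentiate \eqref{model:1}--\eqref{model:2} formally in time (rigorously via time difference quotients) and test the differentiated $\vp$- and $\theta$-equations against $\pd_t\vp$ and $\pd_t\theta$ respectively. The lower bound $W''(\vp) \ge -c_3$ from \eqref{ass:W} handles the $W''$-contribution, while the delicate cross-term $\gamma \int p(\vp)\pd_{tt}\vp\, \pd_t\theta$ is eliminated by substituting $\pd_{tt}\vp$ from the differentiated $\vp$-equation and integrating the resulting $\int p(\vp)\Lap\pd_t\vp\, \pd_t\theta$ by parts; Gagliardo--Nirenberg interpolation together with the $L^2(0,T;H^2)$-regularity of $\vp, \theta$ from Proposition \ref{prop:comp} then closes the estimate via Gronwall, yielding $\pd_t\vp, \pd_t\theta \in L^\infty(0,T;L^2) \cap L^2(0,T;H^1)$.

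The next stage bootstraps the spatial regularity of $\vp$ and $\theta$ by reading \eqref{model:1}--\eqref{model:2} as Neumann elliptic problems for the Laplacian. Since $\pd_t\vp, \pd_t\theta \in L^\infty(0,T;L^2)$, $\theta \in L^\infty(0,T;L^6)$ (from $H^1 \hookrightarrow L^6$ in $d \le 3$), and $W'(\vp) \in L^\infty(0,T;L^2)$ (from $\vp \in L^\infty(0,T;L^6)$ and the cubic growth in \eqref{ass:W}), elliptic regularity on a convex polygonal domain delivers $\vp, \theta \in L^\infty(0,T;H^2_n(\Omega))$, hence $\vp, \theta \in L^\infty(0,T;L^\infty(\Omega))$ by Sobolev embedding. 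With $\vp \in L^\infty$ in hand one can safely repeat the time-derivative estimate in a second layer, testing with $-\Lap\pd_t\vp$ and $-\Lap\pd_t\theta$ and using boundedness of $W''(\vp), p'(\vp), m'(\vp)$ together with the differentiated PDEs to express $\pd_{tt}\vp, \pd_{tt}\theta$; this produces $\pd_t\vp, \pd_t\theta \in L^\infty(0,T;H^1) \cap L^2(0,T;H^2_n(\Omega))$ and $\pd_{tt}\vp, \pd_{tt}\theta \in L^2(0,T;L^2)$. A subsequent elliptic pass with $L^6$-right-hand sides then upgrades $\vp, \theta$ to $L^\infty(0,T;W^{2,6}(\Omega))$.

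For $\bu$, I would differentiate \eqref{model:3} in time, rearrange so that the symmetric positive definite principal part $\CC(\vp)\E(\pd_t\bu)$ appears on the left-hand side, and test with $\pd_t\bu$. H\"older's inequality with $\pd_t\vp \in L^\infty(0,T;L^6)$, together with an elliptic upgrade of $\bu$ to $L^\infty(0,T;H^2(\Omega))$ — valid because $\vp \in L^\infty(0,T;W^{2,6})$ so $\CC(\vp) \in L^\infty(0,T;W^{1,6})$ — combined with Korn's inequality in $X(\Omega)$, will yield $\pd_t\bu \in L^\infty(0,T;X(\Omega)) \hookrightarrow L^4(0,T;X(\Omega))$. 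The principal technical hurdle is the cross-term $\gamma\int p(\vp)\pd_{tt}\vp\, \pd_t\theta$ arising in the time-derivative energy estimate, whose control requires the substitution trick described above and careful Gagliardo--Nirenberg interpolation against the time-integrable bound on $\|\vp\|_{H^2}$; a secondary subtlety is the rigorous justification of time differentiation via divided differences in the presence of the nonlinearities $W'(\vp), p(\vp), \CC(\vp), m(\vp)$.
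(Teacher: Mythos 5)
Your overall architecture (layered energy estimates, elliptic bootstrapping, comparison arguments) matches the paper, but the first and decisive energy estimate does not close as you have set it up. You differentiate \emph{both} \eqref{model:1} and \eqref{model:2} in time at the outset and test with $\pd_t\vp$ and $\pd_t\theta$. The differentiated temperature equation then carries the quadratic source $p'(\vp)|\pd_t\vp|^2$, and the resulting term $\int_\Omega p'(\vp)|\pd_t\vp|^2\pd_t\theta\dx$ can only be bounded, with the base regularity of Proposition~\ref{prop:comp} (which gives merely $\pd_t\vp,\pd_t\theta\in L^2(0,T;L^2)$), by quantities of the form $\|\pd_t\vp\|^2\|\pd_t\theta\|^4$ or $\|\pd_t\vp\|^4\|\pd_t\theta\|^2$ after Gagliardo--Nirenberg; this produces a superlinear (Riccati-type) differential inequality that Gronwall cannot close. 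Your substitution trick for $\int p(\vp)\pd_{tt}\vp\,\pd_t\theta$ has a second defect: after integrating $\int p(\vp)\Lap\pd_t\vp\,\pd_t\theta$ by parts you are left with $-\int p(\vp)\nabla\pd_t\vp\cdot\nabla\pd_t\theta$, which can only be absorbed into the dissipation if $\|p\|_{L^\infty}$ is small (or via a carefully weighted energy you do not introduce), whereas \eqref{ass:p} imposes no smallness. The paper avoids both problems by staggering: it first pairs the time-differentiated $\vp$-equation (tested with $\pd_t\vp$) with the \emph{undifferentiated} $\theta$-equation (tested with $\pd_t\theta$), so that the cross terms $\pm(p(\vp)\pd_t\vp,\pd_t\theta)$ cancel exactly; this yields $\pd_t\vp\in L^\infty(0,T;L^2)$ and, by comparison, $\vp\in L^\infty(0,T;H^2)\hookrightarrow L^\infty(\Omega_T)$ \emph{before} the $\theta$-equation is ever differentiated. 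Only then is \eqref{model:2} differentiated, tested with $\pd_t\theta$, and paired with the differentiated $\vp$-equation tested with $\pd_{tt}\vp$ (not $-\Lap\pd_t\vp$), so that $(p(\vp)\pd_{tt}\vp,\pd_t\theta)$ cancels exactly against $-(p(\vp)\pd_t\theta,\pd_{tt}\vp)$ and the quadratic term is now linearizable via $\|\pd_t\vp\|_{L^3}^2\leq C\|\pd_t\vp\|_{H^1}\|\pd_t\vp\|_{L^\infty(0,T;L^2)}$. These two exact cancellations and the ordering of the bootstrap are the key ideas your proposal is missing.

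A secondary problem concerns the displacement. You claim $\pd_t\bu\in L^\infty(0,T;X(\Omega))$ by first upgrading $\bu$ to $L^\infty(0,T;H^2(\Omega))$ on the grounds that $\CC(\vp)\in L^\infty(0,T;W^{1,6})$. This $H^2$ regularity is not justified: for elliptic \emph{systems} with spatially varying coefficients on a convex polyhedral domain the scalar $H^2$ theory does not transfer, and the paper explicitly states (Remark~\ref{remark:2}, following Theorem~\ref{thm:errest}) that $H^2$ regularity for $\bu$ is unavailable for precisely this reason. The paper instead tests the time-differentiated \eqref{model:3} with $\pd_t\bu$ and bounds the troublesome term by $\|\pd_t\vp\|_{L^\infty}^2\|\E(\bu)\|^2\leq C\|\pd_t\vp\|_{H^2}\|\pd_t\vp\|_{H^1}$, which lies only in $L^2(0,T)$ and therefore yields exactly $\bu\in W^{1,4}(0,T;X(\Omega))$ --- the weaker statement actually claimed in the theorem.
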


\begin{proof}
We present formal calculations that can be made rigorous with a Faedo--Galerkin approximation.  

\paragraph{First estimate.} Taking the time derivative of \eqref{model:1}:
\begin{align}\label{vp:tt}
 \pd_{tt} \vp =  \Delta \pd_t \vp -  W''(\vp) \pd_t \vp -  p(\vp) \pd_t \theta - (\theta - \theta_c) p'(\vp) \pd_t \vp,
\end{align}
and testing with $\pd_t \vp$ yields
\begin{equation}\label{reg:1}
\begin{aligned}
\frac{1}{2} \frac{d}{dt} \| \pd_t \vp \|^2 +  \| \nabla \pd_t \vp \|^2 - c_3 \| \pd_t \vp \|^2  \leq - ( p(\vp) \pd_t \vp, \pd_t \theta) - ((\theta - \theta_c) p'(\vp), |\pd_t \vp|^2),
\end{aligned}
\end{equation}
where we have used the lower bound for $W''$ in \eqref{ass:W}. Then, testing \eqref{model:2} with $\pd_t \theta$ yields
\begin{align*}
 \| \pd_t \theta \|^2 + \frac{1}{2} \frac{d}{dt} \| \nabla \theta \|^2 = ( p(\vp) \pd_t \vp, \pd_t \theta),
\end{align*}
and when added to \eqref{reg:1} we note a cancellation and obtain
\begin{equation}
\begin{aligned}
& \frac{1}{2} \frac{d}{dt} \Big (  \| \pd_t \vp \|^2 + \| \nabla \theta \|^2 \Big ) + \| \nabla \pd_t \vp \|^2 +  \| \pd_t \theta \|^2 \\
& \quad \leq C \| \pd_t \vp \|^2 - ((\theta - \theta_c) p'(\vp), |\pd_t \vp|^2) \leq C \| \pd_t \vp \|^2 + C \big ( 1 + \| \theta \| \big ) \| \pd_t \vp \|_{L^4}^2 \\
& \quad \leq C \| \pd_t \vp \|^2 + C \| \nabla \pd_t \vp \|^{\frac{3}{2}} \| \pd_t \vp \|^{\frac{1}{2}} \leq \frac{1}{2} \| \nabla \pd_t \vp \|^2 + C \| \pd_t \vp \|^2,
\end{aligned}
\end{equation}
where we have employed the boundedness of $\theta$ in $L^\infty(0,T;L^2(\Omega))$ and the Gagliardo--Nirenburg inequality for three dimensions:
\[
\| f \|_{L^4} \leq C \| \nabla f \|^{\frac{3}{4}} \| f \|^{\frac{1}{4}} + \| f \|.
\]
From \eqref{model:1} we see that
\[
\| \pd_t \vp(0) \| \leq C \Big ( \| \vp_0 \|_{H^2} + \| \theta_0 \| + 1 \Big ),
\]
and so by Gronwall's inequality we deduce that 
\begin{align}\label{pdt:vp:reg}
\| \pd_t \vp \|_{L^\infty(0,T;L^2)} + \| \pd_t \vp \|_{L^2(0,T;H^1)} \leq C.
\end{align}
\paragraph{Second estimate.} Returning now to \eqref{model:1} and by a comparison of terms we see that 
\[
\| \Delta \vp \| \leq C \Big (1 +  \| \pd_t \vp \| + \| W'(\vp) \| + \| \theta \| \Big ).
\]
Boundedness of $\vp$ in $L^\infty(0,T;H^1(\Omega))$ and \eqref{ass:W} imply $W'(\vp)$ is bounded in $L^\infty(0,T;L^2(\Omega))$, and hence we deduce that $\Delta \vp$ is bounded in $L^\infty(0,T;L^2(\Omega))$.  Then, elliptic regularity allows us to infer that 
\begin{align}\label{vp:LinftyH2}
\| \vp \|_{L^\infty(0,T;H^2)} \leq C.
\end{align}
\paragraph{Third estimate.} Next, taking the time derivative of \eqref{model:2}:
\begin{align}\label{theta:tt}
 \pd_{tt} \theta - \Delta \pd_t \theta =  p'(\vp) |\pd_t \vp|^2 +  p(\vp) \pd_{tt} \vp,
\end{align}
and testing with $\pd_t \theta$ yields
\begin{align*}
\frac{1}{2} \frac{d}{dt} \| \pd_t \theta \|^2 + \| \nabla \pd_t \theta \|^2 = ( p'(\vp) |\pd_t \vp|^2, \pd_t \theta) + ( p(\vp) \pd_{tt} \vp, \pd_t \theta).
\end{align*}
On the other hand, testing the time derivative of \eqref{model:1} with $\pd_{tt} \vp$ leads to 
\begin{align*}
 \| \pd_{tt} \vp \|^2 + \frac{1}{2} \frac{d}{dt} \| \nabla \pd_t \vp \|^2 = - ( W''(\vp) \pd_t \vp +  p(\vp) \pd_t \theta + (\theta - \theta_c) p'(\vp) \pd_t \vp, \pd_{tt} \vp).
\end{align*}
Summing the above identities and noting a cancellation leads to 
\begin{align*}
& \frac{1}{2} \frac{d}{dt} \Big (  \| \pd_t \theta \|^2 + \| \nabla \pd_t \vp \|^2 \Big ) +  \| \pd_{tt} \vp \|^2 + \| \nabla \pd_t \theta \|^2 \\
& \quad \leq C \|\pd_t \vp \|_{L^3}^2 \| \pd_t \theta \|_{L^3} + C \|W''(\vp) \pd_t \vp \| \| \pd_{tt} \vp \| + C \Big ( 1 + \| \theta \|_{L^\infty} \Big ) \| \pd_t \vp \| \| \pd_{tt} \vp \| \\
& \quad \leq C \| \pd_t \vp \|_{H^1} \Big (\| \pd_t \theta \|^{\frac{1}{2}} \| \nabla \pd_t \theta \|^{\frac{1}{2}} + \| \pd_t \theta \| \Big ) + C\Big ( 1 +  \| W''(\vp) \pd_t \vp \|^2  + \| \theta \|_{L^\infty}^2 \Big ) + \frac{1}{2} \| \pd_{tt} \vp \|^2,
\end{align*}
where we have used the Gagliardo--Nirenburg inequality and the boundedness of $\pd_t \vp$ in $L^\infty(0,T;L^2(\Omega))$. From \eqref{ass:W} we see that $W''$ has quadratic polynomial growth and so 
\[
\int_\Omega |W''(\vp) \pd_t \vp|^2 \dx \leq C \int_\Omega ( 1 + |\vp|^4) |\pd_t \vp|^2 \dx \leq C(1 + \| \vp \|_{L^\infty}^4) \| \pd_t \vp \|^2 \leq C
\]
thanks to \eqref{pdt:vp:reg} and \eqref{vp:LinftyH2}. Hence, we obtain
\begin{align*}
 \frac{d}{dt} \Big (  \| \pd_t \theta \|^2 + \| \nabla \pd_t \vp \|^2 \Big ) +  \| \pd_{tt} \vp \|^2 +  \| \nabla \pd_t \theta \|^2 \leq C \Big ( 1 + \| \theta \|_{H^2} \Big ) + C \Big ( 1 + \| \pd_t \vp \|_{H^1}^2 \Big ) \| \pd_t \theta \|^2.
\end{align*}
From \eqref{model:1} and \eqref{model:2} we see that 
\begin{align*}
\| \pd_t \theta(0) \| & \leq C \| \pd_t \vp(0) \| + C\| \Delta \theta_0 \| \leq C \Big ( \| \theta_0 \|_{H^2} + \| \vp_0 \|_{H^2} \Big ), \\
 \| \nabla \pd_t \vp(0) \| & \leq C\Big ( \| \vp_0 \|_{H^3} + \| \theta_0 \|_{H^1} + 1 \Big ).
\end{align*}
Hence, by a Gronwall argument we deduce that 
\begin{align}\label{pdt:theta:reg}
\| \pd_t \theta \|_{L^\infty(0,T;L^2)} + \| \nabla \pd_t \vp \|_{L^\infty(0;T;L^2)} + \| \pd_{tt} \vp \|_{L^2(0,T;L^2)} + \| \nabla \pd_t \theta \|_{L^2(0,T;L^2)} \leq C.
\end{align} 
Then, a comparison of terms in \eqref{model:2} leads to 
\[
\|\Delta \theta \| \leq C \Big ( \| \pd_t \theta \| + \| \pd_t \vp \| \Big ),
\]
which combining with \eqref{pdt:theta:reg} implies 
\[
\| \theta \|_{L^\infty(0,T;H^2)} \leq C.
\]
\paragraph{Fourth estimate.} We test \eqref{theta:tt} with $\pd_{tt} \theta$ to obtain
\begin{align*}
 \| \pd_{tt} \theta \| + \frac{d}{dt} \frac{1}{2} \| \nabla \pd_t \theta \|^2 &  \leq C \| \pd_t \vp \|_{L^4}^2 \| \pd_{tt} \theta \| + C\| \pd_{tt} \vp \| \| \pd_{tt} \theta \| \\
& \leq \frac{1}{2} \| \pd_{tt} \theta \|^2 + C \| \pd_{tt} \vp \|^2 + C  \| \pd_{tt} \vp \|_{H^1}^4.
\end{align*}
Invoking Gronwall's inequality, the estimate \eqref{pdt:theta:reg} and the fact that $\| \nabla \pd_t \theta(0) \| \leq C( \| \theta_0 \|_{H^3} + \| \vp_0 \|_{H^3} + 1)$ we infer that 
\[
\| \nabla \pd_t \theta \|_{L^\infty(0,T;L^2)} + \| \pd_{tt} \theta \|_{L^2(0,T;L^2)} \leq C.
\]
With $\pd_{tt} \vp, \pd_{tt} \theta \in L^2(0,T;L^2(\Omega))$, when we revisit \eqref{vp:tt} and \eqref{theta:tt} we find that 
\begin{align*}
\| \Delta \pd_t \vp \| & \leq C \big ( \| \pd_{tt} \vp \| + \| W''(\vp) \|_{L^4} \| \pd_t \vp \|_{L^4} + \| \pd_t \theta \| + \| \theta - \theta_c \|_{L^\infty} \| \pd_t \vp \| \big ), \\
\| \Delta \pd_t \theta \| & \leq C \big ( \| \pd_{tt} \theta \| + \| \pd_t \vp \|_{L^4}^2 + \| \pd_{tt} \vp \| \big ).
\end{align*}
Elliptic regularity shows that 
\begin{align*}
\| \pd_t \vp \|_{L^2(0,T;H^2)} + \| \pd_t \theta \|_{L^2(0,T;H^2)} \leq C.
\end{align*}
\paragraph{Fifth estimate.} We now take the time derivative of \eqref{model:3}, then testing with $\bu_t$ and applying the coercivity of $\CC$, boundedness of $\CC'$ and $m'$, as well as the regularities $\pd_t \vp, \pd_t \theta \in L^\infty(0,T;H^1(\Omega)) \cap L^2(0,T;H^2(\Omega))$ leads to 
\begin{align*}
 \| \E(\bu_t) \|^2 &  \leq  C \big ( 1 +  \| \pd_t \vp \|^2 + \| \pd_t \theta \|^2 + \| \theta \|^2 + \| \pd_t \vp \|_{L^\infty}^2 \| \E(\bu) \|^2 \big ) \\
& \leq C \big ( 1 + \| \pd_t \vp \|_{H^2}\| \pd_t \vp \|_{H^1} \big ).
\end{align*}
Hence, squaring both sides and invoking Korn's inequality yields
\[
\| \bu_t \|_{L^4(0,T;X(\Omega))} \leq C.
\]
\paragraph{Sixth estimate.} By expressing \eqref{model:1} and \eqref{model:2} as elliptic systems with right-hand side bounded in $L^\infty(0,T;L^6(\Omega))$, we deduce that $\Delta \vp$ and $\Delta \theta$ are bounded in $L^\infty(0,T;L^6(\Omega))$, and so from elliptic regularity theory we have
\[
\| \vp \|_{L^\infty(0,T;W^{2,6})} + \| \theta \|_{L^\infty(0,T;W^{2,6})} \leq C.
\]
\end{proof}

\section{Error estimates}\label{sec:error}
\subsection{Time discretization error estimates}
For continuous-in-time functions we use the notation $v^k(\cdot) := v(t^k, \cdot)$. Furthermore we introduce the notation
\[
\Psi(s) = \frac{W'(s)}{Q(s)}
\]
with $p^k := p(\vp^k)$ and $\Psi^k := \Psi(\vp^k)$. Then, we rewrite \eqref{weak:SAV:1}-\eqref{SAV:q} (with all parameters set to unity) for the exact solution $(\vp, \theta, q)$ from Theorem \ref{thm:conv} evaluated at time $t^n$:
\begin{subequations}\label{err:sys:1}
\begin{alignat}{2}
\label{Err:cts:1} & ( \vp^{n} - \vp^{n-1}, \psi ) + \tau (\nabla \vp^{n}, \nabla \psi) + \tau ((\theta^{n} - \theta_c)p^{n-1}, \psi) + \tau q^n (\Psi^{n-1}, \psi) \\
\notag & \quad = - (\tau \pd_t \vp^{n} - (\vp^n - \vp^{n-1}), \psi) - \tau ((\theta^n - \theta_c)(p^n - p^{n-1}), \psi)  - \tau q^n (\Psi^n - \Psi^{n-1}, \psi ) \\
\notag & \quad =: (X_\vp^{n-1}, \psi), \\
\label{Err:cts:2} & (\theta^n - \theta^{n-1}, \psi) + \tau (\nabla \theta^n, \nabla \psi) - (p^{n-1}(\vp^n - \vp^{n-1}), \psi) \\
\notag & \quad =- (\tau \pd_t \theta^n - (\theta^n - \theta^{n-1}), \psi) - (p^{n-1}(\vp^n - \vp^{n-1} - \tau \pd_t \vp^n), \psi) - ([p^{n-1} - p^n] \tau \pd_t \vp^n , \psi), \\
\notag & \quad =: (X_\theta^{n-1}, \psi), \\
\label{Err:cts:3} & q^n - q^{n-1} - \tfrac{1}{2} (\Psi^{n-1}, (\vp^n - \vp^{n-1})) \\
\notag & \quad = - (\tau \pd_t q^n - (q^n - q^{n-1})) - \tfrac{1}{2}  (\Psi^{n-1}, \vp^n - \vp^{n-1} - \tau \pd_t \vp^n )- \tfrac{1}{2} (\Psi^{n-1} - \Psi^n, \tau \pd_t \vp^n  ), \\
\notag & \quad =: X_q^{n-1},
\end{alignat} 
\end{subequations}
for arbitrary $\psi \in H^1(\Omega)$.
\begin{lem}\label{lem:timedisc:err}
Under \eqref{ass:dom}-\eqref{ass:Reg}, there exists a positive constant $C$ independent of $\tau$ and $n \in \{1, \dots, N_\tau\}$ such that 
\begin{align*}
\| X_\vp^{n-1} \| & \leq C \tau^{3/2} \big ( \| \pd_{tt} \vp \|_{L^2(t^{n-1},t^n;L^2)} + \| \pd_{t} \vp \|_{L^2(t^{n-1}, t^n;L^2)} \big ),  \\
\| X_\theta^{n-1} \| & \leq C \tau^{3/2} \big (\| \pd_{tt} \theta \|_{L^2(t^{n-1},t^n;L^2)} + \| \pd_{tt} \vp \|_{L^2(t^{n-1}, t^n; L^2)} \big ) + C \tau^{7/4} \| \pd_t \vp \|_{L^4(t^{n-1}, t^n;L^4)}, \\
|X_q^{n-1}| & \leq C \tau^{3/2} \big ( \|q''\|_{L^2(t^{n-1}, t^n)} + \| \pd_{tt} \vp \|_{L^2(t^{n-1}, t^n;L^2)} \big ).
\end{align*}
Consequently
\begin{align}\label{timedisc:err}
\sum_{n=1}^{N_\tau} \frac{1}{\tau} \big (\| X_\vp^{n-1} \|^2 + \|X_\theta^{n-1} \|^2 + |X_q^{n-1}|^2 \big ) \leq C \tau^2.
\end{align}
\end{lem}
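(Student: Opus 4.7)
The estimates in this lemma are pure consistency-error bounds: each $X_\ast^{n-1}$ is a sum of Taylor-remainder-type quantities that appear when one forces the exact solution $(\vp,\theta,q)$ from Theorem~\ref{thm:conv}--Theorem~\ref{thm:reg} into the SAV time-stepping identity, and the proof reduces to bounding each piece via the fundamental theorem of calculus plus Cauchy--Schwarz, using the higher regularity from Theorem~\ref{thm:reg}. The route I would take is: (1) derive each individual pointwise bound, (2) square, divide by $\tau$, sum over $n$, and then invoke the global integrability of the second time derivatives to deduce \eqref{timedisc:err}.

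\textbf{Step 1 (the $X_\vp^{n-1}$ bound).} Write $\tau\pd_t\vp^n-(\vp^n-\vp^{n-1})=\int_{t^{n-1}}^{t^n}(s-t^{n-1})\pd_{tt}\vp(s)\dss$ and apply Cauchy--Schwarz to obtain $C\tau^{3/2}\|\pd_{tt}\vp\|_{L^2(t^{n-1},t^n;L^2)}$. For $\tau(\theta^n-\theta_c)(p^n-p^{n-1})$, use that $\theta^n\in L^\infty(0,T;L^\infty)$ by Theorem~\ref{thm:reg} (so the prefactor is uniformly bounded) and $\|p^n-p^{n-1}\|\le C\tau^{1/2}\|\pd_t\vp\|_{L^2(t^{n-1},t^n;L^2)}$ by the Lipschitz continuity of $p$ and $p^n-p^{n-1}=\int_{t^{n-1}}^{t^n}p'(\vp)\pd_t\vp\dss$. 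For $\tau q^n(\Psi^n-\Psi^{n-1})$, use the uniform boundedness of $q^n$ and the bound $\|\Psi^n-\Psi^{n-1}\|\le C\tau^{1/2}\|\pd_t\vp\|_{L^2(t^{n-1},t^n;L^2)}$, which follows from $\pd_t\Psi(\vp)=\frac{W''(\vp)\pd_t\vp}{Q(\vp)}-\frac{W'(\vp)\,q'(t)}{Q(\vp)^2}$ together with $|q'(t)|\le C\|\pd_t\vp\|$, the lower bound $Q(\vp)\ge 1$, and the fact that $W'(\vp),W''(\vp)\in L^\infty(\Omega_T)$ under Theorem~\ref{thm:reg} (since $\vp\in L^\infty(0,T;L^\infty)$ via $W^{2,6}\hookrightarrow L^\infty$).

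\textbf{Step 2 (the $X_\theta^{n-1}$ and $X_q^{n-1}$ bounds).} For $X_\theta^{n-1}$, the first piece is handled like Step~1 with $\pd_{tt}\theta$ in place of $\pd_{tt}\vp$. For $\vp^n-\vp^{n-1}-\tau\pd_t\vp^n$, Taylor again gives an $L^2$-bound of order $\tau^{3/2}\|\pd_{tt}\vp\|_{L^2(t^{n-1},t^n;L^2)}$, multiplied by $\|p^{n-1}\|_{L^\infty}\le C$. The remaining piece $(p^{n-1}-p^n)\tau\pd_t\vp^n$ is where the weaker $\tau^{7/4}$ rate comes from: estimate by H\"older in $L^4$, using $\|p^n-p^{n-1}\|_{L^4}\le C\tau^{3/4}\|\pd_t\vp\|_{L^4(t^{n-1},t^n;L^4)}$ and $\|\pd_t\vp^n\|_{L^4}\le C$ (since $\pd_t\vp\in L^\infty(0,T;H^1)\hookrightarrow L^\infty(0,T;L^4)$). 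For $X_q^{n-1}$ the first term yields $C\tau^{3/2}\|q''\|_{L^2(t^{n-1},t^n)}$; the second and third are controlled by Cauchy--Schwarz using $\|\Psi^{n-1}\|\le C$ and the same Taylor remainder plus $\Psi$-difference estimates as above, with the $\pd_t\vp$ contribution absorbed into $\pd_{tt}\vp$ via rewriting $q''=\tfrac{1}{2}(\pd_t\Psi(\vp),\pd_t\vp)+\tfrac{1}{2}(\Psi(\vp),\pd_{tt}\vp)$ so that its $L^2(0,T)$-norm is controlled by the regularity of Theorem~\ref{thm:reg}.

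\textbf{Step 3 (summation) and main obstacle.} Squaring the pointwise bounds and dividing by $\tau$ yields, for each $n$, an upper bound of the form $C\tau^2\cdot\bigl(\|\pd_{tt}\vp\|_{L^2(t^{n-1},t^n;L^2)}^2+\|\pd_{tt}\theta\|_{L^2(t^{n-1},t^n;L^2)}^2+\|q''\|_{L^2(t^{n-1},t^n)}^2+\tau^{3/2}\|\pd_t\vp\|_{L^4(t^{n-1},t^n;L^4)}^4+\dots\bigr)$; telescoping the subinterval norms produces the global norms $\|\pd_{tt}\vp\|_{L^2(0,T;L^2)}$, $\|\pd_{tt}\theta\|_{L^2(0,T;L^2)}$, $\|q''\|_{L^2(0,T)}$, $\|\pd_t\vp\|_{L^4(0,T;L^4)}$, all finite by Theorem~\ref{thm:reg}. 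The main (only mildly technical) obstacle is the careful handling of $\Psi^n-\Psi^{n-1}$, because $\Psi$ couples the pointwise $W'(\vp)$ with the nonlocal scalar $Q(\vp)$; here the boundedness of $W', W''$ in $L^\infty$ (which requires $\vp\in L^\infty$ from Theorem~\ref{thm:reg}) together with the identity $q'=\tfrac12(\Psi(\vp),\pd_t\vp)$ is what converts the SAV-specific nonlinearity into estimates of the desired order.
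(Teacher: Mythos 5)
Your proposal is correct and follows essentially the same route as the paper: Taylor remainders with integral form for the truncation terms, Lipschitz/growth bounds for the $p$- and $\Psi$-differences yielding $O(\tau^{1/2})$ factors, the explicit formula for $q''$ to get $q\in H^2(0,T)$, and a Cauchy--Schwarz (or telescoping) argument over $n$ to absorb the $\tau^{7/4}$ term into the final $O(\tau^2)$ bound. The only cosmetic difference is that you bound $\Psi^n-\Psi^{n-1}$ by differentiating $\Psi(\vp(t))$ in time and invoking $\vp\in L^\infty(\Omega_T)$, whereas the paper uses a discrete difference identity for $\Psi$ together with the growth bounds \eqref{W:diff}--\eqref{W':diff}; both give the same $C\tau^{1/2}\|\pd_t\vp\|_{L^2(t^{n-1},t^n;L^2)}$ estimate.
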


\begin{proof}
First, using the relation $q(t) = Q(\vp(t))$, by a direct calculation
\begin{align*}
q''  = \frac{1}{2 Q(\vp)} \int_\Omega W''(\vp) (\pd_t \vp)^2 + W'(\vp) \pd_{tt} \vp \dx - \frac{1}{4 Q^3(\vp)} \Big ( \int_\Omega W'(\vp) \pd_t \vp \dx \Big )^2
\end{align*}
for a.e.~$t \in (0,T)$. Invoking the lower bound on $Q$ and employing the regularity $\vp \in L^\infty(0,T;H^2_n(\Omega)) \cap W^{1,\infty}(0,T;H^1(\Omega)) \cap H^2(0,T;L^2(\Omega))$, we see that $q \in H^2(0,T)$, since
\begin{align}\label{qttL2}
\int_0^T |q''|^2 \dt \leq C \int_0^T \| \pd_t \vp \|_{L^4}^4 + \| \pd_{tt} \vp \|^2 \dt \leq C.
\end{align}
Next, using Taylor's theorem with integral remainder:
\[
f(t^n) - f(t^{n-1}) - \tau \pd_t f(t^{n}) = \int_{t^{n-1}}^{t^n} (t^n - s) \pd_{tt} f(s) \, ds,
\]
we see that for $T_f^{n-1} := f(t^n) - f(t^{n-1}) - \tau \pd_t f(t^n)$, where $f \in \{\vp, \theta, q\}$, it holds that 
\begin{equation}\label{Trunc:est}
\begin{aligned}
\| T_\vp^{n-1} \|^2 & \leq C \tau^3 \| \pd_{tt} \vp \|_{L^2(t^{n-1}, t^n; L^2)}^2, \quad \| T_\theta^{n-1} \|^2 \leq C \tau^3 \| \pd_{tt} \theta \|_{L^2(t^{n-1}, t^n; L^2)}^2, \\
|T_q^{n-1}|^2 & \leq C \tau^3 \|q'' \|_{L^2(t^{n-1}, t^n)}^2.
\end{aligned}
\end{equation}
A short calculation shows 
\begin{align}\label{Psi:diff}
 \Psi^{k-1} - \Psi^k = \frac{W'(\vp^{k-1}) - W'(\vp^k)}{Q(\vp^{k-1})} +  \frac{W'(\vp^k)[Q^2(\vp^k) - Q^2(\vp^{k-1})]}{(Q(\vp^k) + Q(\vp^{k-1})) Q(\vp^k) Q(\vp^{k-1})}.
\end{align}
From the definition of $Q$ and using \eqref{W:diff} we see 
\begin{align*}
| Q^2(\vp^k) - Q^2(\vp^{k-1})| & \leq C \| W(\vp^k) - W(\vp^{k-1}) \|_{L^1} \\
& \leq C( 1 + \| \vp^k \|_{L^6}^3 + \| \vp^{k-1} \|_{L^6}^3) \| \vp^k - \vp^{k-1} \| \\
& \leq C\| \vp^k - \vp^{k-1} \| \leq C \tau^{1/2} \| \pd_t \vp \|_{L^2(t^{k-1}, t^k;L^2)}
\end{align*}
while invoking the Gagliardo--Nirenburg inequality and \eqref{W':diff} we have
\begin{align*}
\| W'(\vp^k) - W'(\vp^{k-1}) \| & \leq C ( 1 + \| \vp^k \|_{L^\infty}^2 + \| \vp^{k-1} \|_{L^\infty}^2) \|\vp^k - \vp^{k-1} \| \\
& \leq C \| \vp^k - \vp^{k-1} \| \leq C \tau^{1/2} \| \pd_t \vp \|_{L^2(t^{k-1}, t^k;L^2)}.
\end{align*}
Hence, we obtain
\begin{align}\label{Psi:diffL2:b}
\| \Psi^k - \Psi^{k-1} \| \leq C\tau^{1/2} \| \pd_t \vp \|_{L^2(t^{k-1}, t^k; L^2)}.
\end{align}
Likewise, using the Lipschitz continuity of $p = P'$, we also have for $r \in [2,\infty)$
\begin{align}\label{p':diff}
\| p^k - p^{k-1} \|_{L^r} & \leq C \| \vp^k - \vp^{k-1} \|_{L^r} \leq C\tau^{\frac{r-1}{r}} \| \pd_t \vp \|_{L^r(t^{k-1},t^k;L^r)}.
\end{align}
Then, from \eqref{Err:cts:1}-\eqref{Err:cts:3} we infer that 
\begin{align*}
\| X_\vp^{n-1} \| & \leq \| T_\vp^{n-1} \| + \tau \| \theta^n - \theta_c \|_{L^\infty} \| p^n - p^{n-1}\| + \tau |q^n| \| \Psi^n - \Psi^{n-1} \| \\
& \leq C \tau^{3/2} \big ( \| \pd_{tt} \vp \|_{L^2(t^{n-1},t^n;L^2)} + \| \pd_{t} \vp \|_{L^2(t^{n-1}, t^n;L^2)} \big ),  \\
\| X_\theta^{n-1} \| & \leq \| T_\theta^{n-1} \| + \| p^{n-1} \|_{L^\infty} \| T_\vp^{n-1} \| + \tau \| p^{n-1} - p^{n}\|_{L^4} \| \pd_t \vp^n \|_{L^4} \\
& \leq C \tau^{3/2} \big (\| \pd_{tt} \theta \|_{L^2(t^{n-1},t^n;L^2)} + \| \pd_{tt} \vp \|_{L^2(t^{n-1}, t^n; L^2)} \big ) + C \tau^{7/4} \| \pd_t \vp \|_{L^4(t^{n-1}, t^n;L^4)}, \\
|X_q^{n-1}| & \leq |T_q^{n-1}| + C \| W'(\vp^n) \| \| T_\vp^{n-1} \| + C \tau \| \Psi^{n-1} - \Psi^n \| \| \pd_t \vp^n \| \\
& \leq C \tau^{3/2} \big ( \|q''\|_{L^2(t^{n-1}, t^n)} + \| \pd_{tt} \vp \|_{L^2(t^{n-1}, t^n;L^2)} \big ).
\end{align*}
To derive the estimate \eqref{timedisc:err} it suffices to square both sides of the above inequalities, divide by $\tau$ and sum from $n = 1$ to $n = N_\tau$. For the term involving $\tau^{7/4}$ we used the Cauchy--Schwarz inequality to see that
\[
\sum_{n=1}^{N_\tau} \tau^{5/2} \| \pd_t \vp \|_{L^4(t^{n-1},t^n;L^4)}^2 \leq \tau^{5/2} \| \pd_t \vp \|_{L^4(0,T;L^4)}^2 \sqrt{N_\tau} \leq C \tau^2.
\]
\end{proof}

\subsection{Optimal error estimates for the Caginalp system}
We introduce the discrete norm
\[
\| f \|_{l^\infty(H^s)} := \sup_{1 \leq n \leq N_\tau} \| f(t^n) \|_{H^s},
\]
with the convention that $H^0(\Omega) = L^2(\Omega)$.

\begin{thm}[Error estimates for the Caginalp system]\label{thm:errest}
Let $(\vp_h, \theta_h, q_h)$ be the fully discrete solution to \eqref{SAV}, and let $(\vp, \theta, q = Q(\vp))$ be the unique solution to \eqref{model} with the regularity stated in Theorem \ref{thm:reg}. Then, there exists a positive constant $C$ independent of $h$ and $\tau$ such that 
\begin{equation}
\begin{aligned}
\| \vp - \vp_h \|_{l^\infty(L^2)}  + \| \theta - \theta_h \|_{l^\infty(L^2)} +  |q - q_h| &  \leq C (h^2 + \tau), \\
 \| \nabla (\vp - \vp_h) \|_{l^\infty(L^2)} + \| \nabla (\theta - \theta_h) \|_{l^\infty(L^2)} & \leq C(h + \tau).
\end{aligned}
\end{equation}
\end{thm}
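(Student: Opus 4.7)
The plan is to proceed by a standard Ritz-projection splitting combined with the time-discretization error identities already collected in \eqref{err:sys:1} and Lemma \ref{lem:timedisc:err}. Set $e_\vp^n := \vp^n - \vp_h^n$, $e_\theta^n := \theta^n - \theta_h^n$, $e_q^n := q^n - q_h^n$, and decompose
\[
e_\vp^n = \underbrace{(\vp^n - R_h \vp^n)}_{=:\,\rho_\vp^n} + \underbrace{(R_h \vp^n - \vp_h^n)}_{=:\,\xi_\vp^n \in \Sh},
\qquad
e_\theta^n = \rho_\theta^n + \xi_\theta^n
\]
analogously. By \eqref{Ritz1} and the $H^2$-regularity of $(\vp,\theta)$ from Theorem \ref{thm:reg} the Ritz parts already satisfy $\|\rho_\vp^n\|+\|\rho_\theta^n\| \leq Ch^2$ and $\|\nabla \rho_\vp^n\|+\|\nabla \rho_\theta^n\| \leq Ch$, so the task reduces to establishing the same bounds for $\xi_\vp^n, \xi_\theta^n$ and the scalar $e_q^n$.

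Subtracting \eqref{SAV} from \eqref{err:sys:1} and using the orthogonality $(\nabla \rho_f^n,\nabla \psi_h)=0$ for $\psi_h\in\Sh$, I would obtain the discrete error system: for $\psi_h\in\Sh$,
\begin{align*}
(\xi_\vp^n - \xi_\vp^{n-1}, \psi_h) + \tau(\nabla \xi_\vp^n, \nabla \psi_h) &= -(\rho_\vp^n - \rho_\vp^{n-1},\psi_h) + (X_\vp^{n-1},\psi_h) \\
&\quad - \tau\bigl((\theta^n - \theta_c)p^{n-1} - (\theta_h^n-\theta_c)p(\vp_h^{n-1}),\psi_h\bigr) \\
&\quad - \tau\bigl(q^n \Psi^{n-1} - q_h^n \Psi(\vp_h^{n-1}),\psi_h\bigr),
\end{align*}
and analogous equations for $\xi_\theta^n$ and for $e_q^n$. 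I would then test the $\xi_\vp$-equation with $\xi_\vp^n$, the $\xi_\theta$-equation with $\xi_\theta^n$, and multiply the $e_q$-equation by $2 e_q^n$, summing all three. The cross term $(p^{n-1}\xi_\vp, \xi_\theta)$ generated by testing each equation cancels with the corresponding term from the temperature equation, exactly as in the stability analysis of Lemma \ref{lem:stab}. Using the splitting
\[
q^n\Psi^{n-1} - q_h^n\Psi(\vp_h^{n-1}) = (q^n - q_h^n)\Psi^{n-1} + q_h^n\bigl(\Psi^{n-1} - \Psi(\vp_h^{n-1})\bigr)
\]
(and an identical decomposition for the $(\theta-\theta_c)p$ term), each difference of nonlinearities can be controlled by a Lipschitz-type bound analogous to \eqref{W':diff:L65}, \eqref{p':diff} and \eqref{Psi:diffL2:b}, but with the two arguments replaced by $(\vp^{n-1},\vp_h^{n-1})$; this works because $\vp^{n-1}\in L^\infty$ by Theorem \ref{thm:reg} while $\vp_h^{n-1}$ is uniformly bounded in $L^6$ by Lemma \ref{lem:stab}. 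Bounds on the truncation errors $X_\vp^{n-1}, X_\theta^{n-1}, X_q^{n-1}$ come directly from Lemma \ref{lem:timedisc:err}, and the Ritz-difference terms $\rho_\vp^n - \rho_\vp^{n-1}$ are controlled via $\int_{t^{n-1}}^{t^n} R_h \pd_t \vp \dt$ combined with \eqref{Ritz1} applied to $\pd_t\vp \in L^2(0,T;H^2(\Omega))$.

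After absorbing gradient terms into the dissipation $\tau\|\nabla\xi_\vp^n\|^2+\tau\|\nabla\xi_\theta^n\|^2$, summing from $n=1$ to $k$, using $\xi_\vp^0 = \xi_\theta^0 = 0$ and $e_q^0 = 0$, and invoking the discrete Gronwall inequality, I expect to arrive at
\[
\max_{1\leq k\leq N_\tau}\bigl(\|\xi_\vp^k\|^2+\|\xi_\theta^k\|^2+|e_q^k|^2\bigr) + \sum_{n=1}^{N_\tau}\tau\bigl(\|\nabla\xi_\vp^n\|^2+\|\nabla\xi_\theta^n\|^2\bigr) \leq C(h^4+\tau^2),
\]
which together with the Ritz estimates yields the claimed $l^\infty(L^2)$ bound of order $h^2+\tau$. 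For the $l^\infty(H^1)$ bound, I would repeat the argument by instead testing the $\xi_\vp$-equation with $\xi_\vp^n - \xi_\vp^{n-1}$ and the $\xi_\theta$-equation with $\xi_\theta^n - \xi_\theta^{n-1}$; the summation by parts then produces $\|\nabla\xi_\vp^k\|^2 + \|\nabla\xi_\theta^k\|^2$ on the left, the nonlinear contributions are handled identically, and the Ritz part $\|\nabla\rho_\vp^n\|=O(h)$ dominates, giving the $O(h+\tau)$ bound.

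The main obstacle I anticipate is the simultaneous control of the two SAV-induced error contributions $e_q^n\Psi^{n-1}$ and $q_h^n(\Psi^{n-1}-\Psi(\vp_h^{n-1}))$: the latter must be bounded by $\|\xi_\vp^{n-1}\|+\|\rho_\vp^{n-1}\|$ with a constant independent of $n$, which forces us to use the a priori $L^\infty$ bound on $q_h^n$ from \eqref{unif:bdd} together with the structural decomposition \eqref{Psi:diff} adapted to $(\vp^{n-1},\vp_h^{n-1})$, and relies crucially on the $L^\infty(W^{2,6})$-regularity of $\vp$ from Theorem \ref{thm:reg} to keep the prefactor $\|1+|\vp^{n-1}|^2+|\vp_h^{n-1}|^2\|_{L^3}$ uniformly bounded. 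Ensuring that no terms of the form $\tau^{-1}\|\xi_\vp^{n-1}\|^2$ appear on the right-hand side after applying Young's inequality is the delicate bookkeeping step that drives the whole argument.
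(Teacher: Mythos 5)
Your overall skeleton (Ritz splitting, the truncation bounds of Lemma \ref{lem:timedisc:err}, discrete Gronwall) matches the paper, but two essential devices are missing, and the first one makes the argument as written fail.

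First, the test functions. You test the phase-field error equation with $\xi_\vp^n$ and claim the cross terms cancel ``exactly as in the stability analysis of Lemma \ref{lem:stab}.'' But in that lemma the cancellation is obtained by testing the $\vp$-equation with the \emph{difference} $\vp_h^n-\vp_h^{n-1}$, not with $\vp_h^n$: the SAV coupling produces $e^n(\Psi_h^{n-1},\kappa_\vp^n-\kappa_\vp^{n-1})$ from the $\vp$-equation, which cancels against $-2e^n\cdot\tfrac12(\Psi_h^{n-1},\kappa_\vp^n-\kappa_\vp^{n-1})$ from the $q$-equation, and similarly $(p_h^{n-1}\kappa_\theta^n,\kappa_\vp^n-\kappa_\vp^{n-1})$ cancels against the transport term in the $\theta$-equation tested with $\kappa_\theta^n$. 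With your choice $\psi_h=\xi_\vp^n$ neither cancellation occurs, and you are left with $e_q^n(\Psi_h^{n-1},\xi_\vp^n-\xi_\vp^{n-1})$ and $(p_h^{n-1}(\xi_\vp^n-\xi_\vp^{n-1}),\xi_\theta^n)$ to estimate directly. Young's inequality then produces $O(1)$ multiples of $|e_q^n|^2$ and $\|\xi_\theta^n\|^2$ (not $O(\tau)$ multiples), because the only dissipation available from testing with $\xi_\vp^n$ is $\tfrac12\|\xi_\vp^n-\xi_\vp^{n-1}\|^2$, not $\tfrac{1}{\tau}\|\xi_\vp^n-\xi_\vp^{n-1}\|^2$; discrete Gronwall cannot absorb coefficients summing to $N_\tau=T/\tau$. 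This is exactly the ``delicate bookkeeping'' obstacle you flag at the end, but it is not bookkeeping --- it forces the paper's choice $\psi_h=\tfrac1\tau(\kappa_\vp^n-\kappa_\vp^{n-1})$ in the first energy estimate, which controls $\|\nabla\kappa_\vp^k\|^2+\|\kappa_\theta^k\|^2+|e^k|^2$; the $L^2$ bound for $\kappa_\vp$ and the $H^1$ bound for $\kappa_\theta$ are then recovered in two further separate estimates (testing with $\kappa_\vp^n$ and with $\tfrac1\tau(\kappa_\theta^n-\kappa_\theta^{n-1})$ respectively).

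Second, the nonlinearity. You propose to bound $\|\Psi^{n-1}-\Psi(\vp_h^{n-1})\|$ using only the $L^6$ bound on $\vp_h^{n-1}$ from Lemma \ref{lem:stab} together with the smoothness of the exact solution. Since $W'$ grows cubically, the Lipschitz estimate carries the prefactor $1+|\vp^{n-1}|^2+|\vp_h^{n-1}|^2$; with $\vp_h^{n-1}$ only in $L^6$ this prefactor lies in $L^3$ and H\"older gives $\|W'(\vp^{n-1})-W'(\vp_h^{n-1})\|\le C\|\vp^{n-1}-\vp_h^{n-1}\|_{L^6}$, i.e.\ an $H^1$-norm of the error rather than the $L^2$-norm, which destroys the optimal $O(h^2)$ rate. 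The regularity $\vp\in L^\infty(0,T;W^{2,6})$ does not help here because the problematic factor is $|\vp_h^{n-1}|^2$, a property of the \emph{discrete} solution. The paper resolves this with the a priori induction hypothesis $\|\vp_h^k\|_{L^\infty}\le\|\vp\|_{L^\infty(0,T;L^\infty)}+1$ (following the cited strategy of Chen--Mao--Shen), verified at step $M$ by inverse/discrete-interpolation inequalities applied to the error bounds just obtained, split according to whether $\tau\le h$ or $h\le\tau$. Without this bootstrap, the constants in your Gronwall loop are not uniform and the argument does not close.
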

\begin{proof}
We recall the Ritz projection $R_h : H^1(\Omega) \to \Sh$ and define for $f \in \{\vp, \theta\}$ the decomposition of the error $f^k_h(\cdot) - f(t^k, \cdot)$ into
\[
\kappa_f^{k} := f_h^{k} - R_h f(t^{k+1}), \quad \rho_f^{k} := f(t^{k+1}) - R_h f(t^{k+1})
\]
so that $f_h^k - f(t^k) = \kappa_f^k - \rho_f^k$, while we define $e^n := q_h^n - q^n$. Furthermore we use the notation $p_h^k := p(\vp_h^k)$ and $\Psi_h^k := \Psi(\vp_h^k)$. Then, taking the difference between the fully discrete scheme \eqref{SAV} for $(\vp_h^n, \theta_h^n, q_h^n)$ and the system \eqref{err:sys:1} for $(\vp^n, \theta^n, q^n)$ we find that for arbitrary $\psi_h \in \Sh$,
\begin{subequations}
\begin{alignat}{2}
\label{err:vp} 0 &= (\kappa_\vp^n - \rho_{\vp}^n - \kappa_\vp^{n-1}  + \rho_{\vp}^{n-1},  \psi_h) + \tau (\nabla \kappa_{\vp}^n, \nabla \psi_h) + (X_\vp^{n-1}, \psi_h) \\
\notag & \quad + \tau e^n (\Psi^{n-1}_h, \psi_h) + \tau q^n (\Psi^{n-1}_h - \Psi^{n-1}, \psi_h)  \\
\notag & \quad + \tau ((\kappa_\theta^n - \rho_\theta^n) p'(\vp_h^{n-1}) , \psi_h) + \tau ((\theta^n - \theta_c)(p^{n-1}_h - p^{n-1}), \psi_h), \\
\label{err:theta} 0& = (\kappa_\theta^n - \rho_\theta^n - \kappa_\theta^{n-1} + \rho_{\theta}^{n-1}, \psi_h) + \tau (\nabla \kappa_{\theta}^n, \nabla \psi_h) + (X_\theta^{n-1}, \psi_h) \\
\notag & \quad - (p^{n-1}_h (\kappa_\vp^n - \rho_\vp^n - \kappa_{\vp}^{n-1} + \rho_{\vp}^{n-1}), \psi_h)  - ((p^{n-1}_h - p^{n-1}) (\vp^n - \vp^{n-1}), \psi_h), \\
\label{err:q} 0 & = e^n - e^{n-1} - \tfrac{1}{2} (\Psi^{n-1}_h, \kappa_\vp^n - \rho_\vp^n - \kappa_\vp^{n-1} + \rho_{\vp}^{n-1}) + X_q^{n-1} \\
\notag & \quad - \tfrac{1}{2}(\Psi^{n-1}_h - \Psi^{n-1}, \vp^n - \vp^{n-1}).
\end{alignat}
\end{subequations}

\paragraph{Induction argument.}Similar to \cite{ChenSAV} we invoke a mathematical induction on 
\begin{align}\label{induction}
\| \vp_h^k \|_{L^\infty} \leq \| \vp \|_{L^\infty(0,T;L^\infty)} + 1
\end{align}
for all $k = 1, \dots, N_\tau$. For $n = 0$, we use \eqref{Ritz2} and $\vp_h^0 = R_h \vp_0$ to deduce the existence of $h_0 > 0$ such that 
\[
\| \vp_h^0 \|_{L^\infty} \leq \| \vp \|_{L^\infty(0,T;L^\infty)} + 1
\]
valid for all $h < h_0$. We now assume \eqref{induction} holds for $k = 0, 1, \dots, M-1$, and the induction argument for $k = M$ for arbitrary $M \in \{1, \dots, N_\tau\}$ is established once we demonstrate 
\begin{align}\label{induction:req}
\max_{1 \leq k \leq M} \| \kappa_\vp^k \|_{H^1}^2 + \tau \sum_{n=1}^M \| \Delta_h \kappa_\vp^n \|^2 \leq C(h^4 + \tau^2)
\end{align}
for a positive constant $C$ independent of $M$.

\paragraph{First estimate.} Choosing $\psi_h = \frac{1}{\tau} (\kappa_{\vp}^n - \kappa_{\vp}^{n-1})$ in \eqref{err:vp}, $\psi_h = \kappa_\theta^n$ in \eqref{err:theta} and multiplying \eqref{err:q} by $2 e^n$, upon summing and noting a cancellation of terms involving $(\Psi_h^{n-1}, \kappa_{\vp}^n - \kappa_{\vp}^{n-1})e^n$ and $(p^{n-1}_h (\kappa_\vp^n - \kappa_{\vp}^{n-1}), \kappa_\theta^n)$, we obtain
\begin{equation}\label{err:1}
\begin{aligned}
& \frac{1}{2} (\| \nabla \kappa_\vp^n \|^2 + \| \kappa_\theta^n \|^2 + 2 |e^n|^2) - \frac{1}{2}(\| \nabla \kappa_{\vp}^{n-1} \|^2 + \| \kappa_\theta^{n-1} \|^2 + 2 |e^{n-1}|^2) \\
& \qquad + \frac{1}{2} (\| \nabla (\kappa_\vp^n - \kappa_\vp^{n-1}) \|^2 + \| \kappa_\theta^n - \kappa_\theta^{n-1} \|^2 + 2 |e^n - e^{n-1}|^2) \\
& \qquad + \tau \| \nabla \kappa_\theta^n \|^2  + \frac{1}{\tau} \| \kappa_\vp^n - \kappa_{\vp}^{n-1} \|^2 \\
& \quad = J_1 + J_2 + J_3,
\end{aligned}
\end{equation}
where
\begin{align*}
J_1 & = - 2X_q^{n-1} e^n + e^n (\Psi_h^{n-1}, \rho_\vp^{n} - \rho_{\vp}^{n-1}) + e^n (\Psi_h^{n-1} - \Psi^{n-1}, \vp^n - \vp^{n-1}) \\
& \quad - q^n (\Psi_h^{n-1} - \Psi^{n-1}, \kappa_\vp^n - \kappa_{\vp}^{n-1}), \\ 
J_2 & = (\rho_\vp^n - \rho_{\vp}^{n-1} - X_\vp^{n-1}, \tfrac{1}{\tau}(\kappa_\vp^n - \kappa_{\vp}^{n-1})) + (p_h^{n-1} \rho_\theta^n, \kappa_\vp^n - \kappa_\vp^{n-1}) \\
& \quad + ((\theta^n - \theta_c) (p_h^{n-1} - p^{n-1}), \kappa_\vp^n - \kappa_\vp^{n-1}), \\
J_3 & = -(X_\theta^{n-1} + p_h^{n-1} (\rho_\vp^n - \rho_\vp^{n-1}) - (p_h^{n-1} - p^{n-1})(\vp^n - \vp^{n-1}), \kappa_\theta^n).
\end{align*}
Let us collect a few useful estimates: Using \eqref{Ritz1} we have
\begin{align}
\| \rho_\vp^{k} \| & \leq Ch^s \| \vp^k \|_{H^s},\label{rho:est:1} \\
\| \rho_\vp^n - \rho_\vp^{n-1} \| & \leq C h^s \| \vp^n - \vp^{n-1} \|_{H^s} = C h^s \tau^{1/2} \| \pd_t \vp \|_{L^2(t^{n-1}, t^n; H^s)}. \label{rho:est:2}
\end{align}
On the other hand, by the Lipschitz continuity of $p$ 
\begin{align}\label{p:err:est}
\| p_h^{n-1} - p^{n-1} \|_{L^r} \leq C \| \kappa_\vp^{n-1} - \rho_\vp^{n-1} \|_{L^r},
\end{align}
and by an analogous calculation to \eqref{Psi:diff} where upon employing \eqref{W:diff}, \eqref{W':diff} and \eqref{induction}:
\begin{align}\label{Psi:err:est}
\| \Psi_h^{n-1} - \Psi^{n-1} \| \leq C \| \kappa_\vp^{n-1} - \rho_\vp^{n-1} \|.
\end{align}
Furthermore, we use the fact that $\pd_t \vp \in L^\infty(0,T;H^1(\Omega))$ to see that for $r \in [2,6]$,
\begin{align}\label{vpn:est}
\frac{1}{\tau} \| \vp^n - \vp^{n-1}\|_{L^r} \leq \frac{1}{\tau} \Big ( \int_{t^{n-1}}^{t^n} \| \pd_t \vp \|_{L^r}^r \, dt \Big)^{1/r} \tau^{\frac{r-1}{r}} \leq \| \pd_t \vp \|_{L^\infty(0,T;H^1)} \leq C,
\end{align}
which would be helpful for estimating the third term of $J_1$ and the last term of $J_3$. Then, the right-hand side of \eqref{err:1} can be estimated as follows:
\begin{align*}
J_1 & \leq C\frac{\tau^{1/2}}{\tau^{1/2}} |X_q^{n-1}| |e^n| + \| \Psi^{n-1}_h \| \frac{\tau^{1/2}}{\tau^{1/2}} |e^n| \| \rho_\vp^n - \rho_\vp^{n-1} \|  \\
& \quad + \tau |e^n| \| \Psi_h^{n-1} - \Psi^{n-1} \| \tau^{-1} \| \vp^{n} - \vp^{n-1} \|+ C \frac{\tau^{1/2}}{\tau^{1/2}} |q^n| \| \Psi_h^{n-1} - \Psi^{n-1} \| \| \kappa_\vp^n - \kappa_\vp^{n-1} \| \\
& \leq C \tau |e^n|^2 + \frac{C}{\tau} |X_q^{n-1}|^2 + \frac{C}{\tau} \| \rho_\vp^n - \rho_\vp^{n-1} \|^2 + C \tau \| \kappa_\vp^{n-1} - \rho_\vp^{n-1} \|^2 + \frac{1}{4 \tau} \| \kappa_\vp^n - \kappa_\vp^{n-1} \|^2, \\
J_2 & \leq \frac{1}{4 \tau} \| \kappa_\vp^n - \kappa_{\vp}^{n-1} \|^2 + \frac{C}{\tau} \Big ( \|X_\vp^{n-1} \|^2 + \| \rho_\vp^{n} - \rho_\vp^{n-1} \|^2 + \tau^2 \| \rho_\theta^n \|^2 + \tau^2 \| \kappa_\vp^{n-1} - \rho_\vp^{n-1} \|^2 \Big ), \\
J_3 & \leq \frac{\tau^{1/2}}{\tau^{1/2}}( \| X_\theta^{n-1} \| + \| \rho_\vp^n - \rho_\vp^{n-1} \|) \| \kappa_\theta^n \| + C\tau \| \kappa_\theta^n \|_{L^3} \| \| \kappa_\vp^{n-1} - \rho_\vp^{n-1} \| \tau^{-1} \| \vp^n - \vp^{n-1} \|_{L^6} \\
& \leq C \tau \| \kappa_\theta^n \|^2 + \frac{\tau}{2} \| \nabla \kappa_\theta^n \|^2 + \frac{C}{\tau} \| X_\theta^{n-1} \|^2 + \frac{C}{\tau} \| \rho_\vp^{n} - \rho_\vp^{n-1} \|^2 + C \tau \| \kappa_\vp^{n-1} -\rho_\vp^{n-1} \|^2,
\end{align*}
where we have used \eqref{induction} so that $\| \Psi_h^{n-1} \| \leq C$, \eqref{p:err:est}, \eqref{Psi:err:est}, $q \in L^\infty(0,T)$ and $\theta \in L^\infty(0,T;H^2(\Omega))$.

Recalling the discrete Neumann-Laplacian \eqref{disc:lap} we obtain from \eqref{err:vp} the estimate
\begin{equation}
\begin{aligned}
\tau \| \Delta_h \kappa_\vp^n \|^2 & \leq \frac{1}{\tau} \| X_\vp^{n-1} \|^2 + \frac{1}{\tau} \| \kappa_\vp^{n} - \kappa_\vp^{n-1} \|^2 + \frac{1}{\tau} \| \rho_\vp^n - \rho_{\vp}^{n-1} \|^2 \\
& \quad  + C \tau |e^n|^2 + C \tau \|\kappa_\vp^{n-1} - \rho_\vp^{n-1} \|^2 
\end{aligned}
\end{equation}
which after multiplying by $\frac{1}{4}$ we add to \eqref{err:1}. Then, upon neglecting some non-negative terms and applying the estimates for $J_1$, $J_2$ and $J_3$, as well as \eqref{rho:est:1} and \eqref{rho:est:2}, we infer that for any $\tau < 1$,
\begin{equation}\label{err:2}
\begin{aligned}
& \frac{1}{2} (\| \nabla \kappa_\vp^n \|^2 + \| \kappa_\theta^n \|^2 + 2 |e^n|^2) - \frac{1}{2}(\| \nabla \kappa_{\vp}^{n-1} \|^2 + \| \kappa_\theta^{n-1} \|^2 + 2 |e^{n-1}|^2) \\
& \qquad + \frac{\tau}{2} \| \nabla \kappa_\theta^n \|^2 + \frac{1}{4\tau} \| \kappa_\vp^n - \kappa_{\vp}^{n-1} \|^2 + \frac{\tau}{4} \| \Delta_h \kappa_\vp^n \|^2 \\
& \quad \leq C\tau \big ( |e^n|^2 + \| \kappa_\theta^n \|^2 \big ) + \frac{C}{\tau} \big (|X_q^{n-1}|^2 + \| X_\vp^{n-1} \|^2 + \| X_\theta^{n-1} \|^2 \big ) + C \tau \| \rho_\theta^n \|^2 \\
& \qquad + \frac{C}{\tau} \| \rho_\vp^n - \rho_\vp^{n-1} \|^2 + C \tau \| \kappa_\vp^{n-1} - \rho_\vp^{n-1} \|^2 \\
& \quad \leq C\tau \big ( |e^n|^2 + \| \kappa_\theta^n \|^2 \big ) + \frac{C}{\tau} \big (|X_q^{n-1}|^2 + \| X_\vp^{n-1} \|^2 + \| X_\theta^{n-1} \|^2 \big ) + C \tau h^{4} \| \theta^n \|_{H^2}^2 \\
& \qquad + Ch^{4} \| \pd_t \vp \|_{L^2(t^{n-1},t^n;H^2)}^2 + C \tau \| \kappa_\vp^{n-1} \|^2 + C \tau h^{4} \| \vp^{n-1} \|_{H^2}^2.
\end{aligned}
\end{equation}
Recalling the initialization $\vp_h^0 = R_h \vp_0$, $\theta_h^0 = R_h \theta_0$ and $q_h^0 = Q(\vp_h^0)$ for the fully discrete scheme \eqref{SAV}, we see that 
\[
\| \nabla \kappa_\vp^0 \| \leq C h^2 \| \vp_0 \|_{H^3}, \quad \| \kappa_\theta^0 \| \leq Ch^2 \| \theta_0 \|_{H^2}, \quad |e^0| \leq C \| \vp_h^0 - \vp_0 \| \leq C \| \rho_\vp^0 \| \leq C h^2 \| \vp_0 \|_{H^2}.
\]
Summing \eqref{err:2} from $n = 1$ to $n = k$ for arbitrary $k \in \{1, \dots, M\}$, and applying the discrete Gronwall inequality and Lemma \ref{lem:timedisc:err} leads to 
\begin{equation}\label{err:3}
\begin{aligned}
& \frac{1}{2} (\| \nabla \kappa_\vp^k \|^2 + \| \kappa_\theta^k \|^2 + 2 |e^k|^2) + \sum_{n=1}^{k} \Big ( \frac{\tau}{2} \| \nabla \kappa_\theta^n \|^2 + \frac{1}{4 \tau} \| \kappa_\vp^{n} - \kappa_\vp^{n-1} \|^2 + \frac{\tau}{4} \| \Delta_h \kappa_\vp^n \|^2 \Big ) \\
& \quad  \leq C (h^4 + \tau^2).
\end{aligned}
\end{equation}
\paragraph{Second estimate.} Choosing $\psi_h = \kappa_\vp^n$ in \eqref{err:vp} gives
\begin{align*}
& \frac{1}{2} (\| \kappa_\vp^n \|^2 - \| \kappa_\vp^{n-1} \|^2 + \| \kappa_\vp^{n} - \kappa_\vp^{n-1} \|^2 ) + \tau \| \nabla \kappa_\vp^{n} \|^2 \\
%& \quad \leq \big ( \| \rho_\vp^{n} - \rho_\vp^{n-1} \| + \|X_\vp^{n-1} \| + \tau |e^n| \| \Psi_h^{n-1} \| + \tau |q^n| \| \Psi_h^{n-1} - \Psi^{n-1} \| \big ) \| \kappa_\vp^{n} \| \\
%& \qquad + \big ( C\tau \| \kappa_\theta^n - \rho_\theta^n \| + C \tau \| p_h^{n-1} - p^{n-1} \| \big ) \| \kappa_\vp^n \| \\
& \quad \leq C \tau \| \kappa_\vp^n \|^2 + \frac{C}{\tau} \big ( \| \rho_\vp^n - \rho_\vp^{n-1} \|^2 +  \| X_\vp^{n-1} \|^2 \big ) + C \tau  \big (|e^n|^2 + \| \kappa_\vp^{n-1} - \rho_\vp^{n-1} \|^2 + \| \kappa_\theta^n - \rho_\theta^n \|^2 \big )\\
& \quad \leq C \tau  \big ( \| \kappa_\vp^n \|^2  + \| \kappa_\vp^{n-1} \|^2 + |e^n|^2   + \| \kappa_\theta^n \|^2 \big ) + \frac{C}{\tau} \big ( \| \rho_\vp^n - \rho_\vp^{n-1} \|^2 + \| X_\vp^{n-1} \|^2 \big ) \\
& \qquad + C\tau h^{4} \big ( \| \vp^{n-1} \|_{H^2}^2 + \| \theta^n \|_{H^2}^2 \big ).
\end{align*}
Summing from $n = 1$ to $n = k$ for arbitrary $k \in \{1, \dots, M\}$, invoking \eqref{timedisc:err}, \eqref{rho:est:2}, \eqref{err:3} and the regularities $\vp, \theta \in L^\infty(0,T;H^2(\Omega))$, $\pd_t \vp \in L^2(0,T;H^2(\Omega))$ leads to 
\begin{align*}
\| \kappa_\vp^k \|^2 \leq C\| \kappa_\vp^0 \|^2 +  \sum_{n=1}^{k} C \tau \| \kappa_\vp^n \|^2 + C(h^4 + \tau^2) \leq  \sum_{n=1}^{k} C \tau \| \kappa_\vp^n \|^2 + C(h^4 + \tau^2).
\end{align*}
Then, by the discrete Gronwall inequality we have
\begin{align}\label{err:4}
\| \kappa_\vp^k \|^2 \leq C (h^4 + \tau^2).
\end{align}

\paragraph{Third estimate.} Choosing $\psi_h = \frac{1}{\tau}(\kappa_\theta^n - \kappa_\theta^{n-1})$ in \eqref{err:theta} gives
\begin{align*}
& \frac{1}{2} \big ( \| \nabla \kappa_\theta^n \|^2 - \| \nabla \kappa_\theta^{n-1} \|^2 + \| \nabla (\kappa_\theta^n - \kappa_\theta^{n-1}) \|^2 \big ) + \frac{1}{2\tau} \| \kappa_\theta^n - \kappa_\theta^{n-1} \|^2 \\
& \quad \leq \frac{C}{\tau} \big (\| X_\theta^{n-1} \|^2 + \| \rho_\theta^n - \rho_\theta^{n-1} \|^2 + \| \kappa_\vp^n - \kappa_\vp^{n-1} \|^2 + \| \rho_\vp^n - \rho_\vp^{n-1} \|^2 \big ) \\
& \qquad + \frac{C}{\tau} \| \kappa_\vp^{n-1} - \rho_\vp^{n-1} \|^2 \| \vp^{n} - \vp^{n-1} \|_{L^\infty}^2 \\
& \quad \leq \frac{C}{\tau} \| X_\theta^{n-1} \|^2 + \frac{C}{\tau} \| \kappa_\vp^n - \kappa_\vp^{n-1} \|^2 + C h^4 \big ( \| \pd_t \vp \|_{L^2(t^{n-1}, t^n; H^2)}^2 + \| \pd_t \theta \|_{L^2(t^{n-1}, t^n;H^2)}^2 \big ) \\
& \qquad + C (h^4 + \tau^2) \| \pd_t \vp \|_{L^2(t^{n-1}, t^n; L^\infty)}^2,
\end{align*}
where in the above we have used the estimates
\begin{align*}
& \| \vp^n - \vp^{n-1} \|_{L^\infty}^2 \leq \tau \int_{t^{n-1}}^{t^n} \| \pd_t \vp \|_{L^\infty}^2 \dt, \\
& \| \kappa_\vp^{n-1} - \rho_\vp^{n-1} \|^2 \leq C \| \kappa_\vp^{n-1} \|^2 + C \| \rho_\vp^{n-1} \|^2 \leq C(h^4 + \tau^2) 
\end{align*}
due to \eqref{rho:est:1} and \eqref{err:4}. Summing from $n = 1$ to $n = k$ for arbitrary $k \in \{1, \dots, M\}$ and using the discrete Gronwall inequality leads to 
\begin{align}\label{err:5}
\| \nabla \kappa_\theta^k \|^2 + \tau \sum_{n=1}^k \| \kappa_\theta^n - \kappa_\theta^{n-1} \|^2 \leq C(h^4 + \tau^2).
\end{align}

\paragraph{Induction step.} We proceed similarly as in \cite{ChenSAV}. If $\tau \leq h$, using the inverse inequality (see e.g.~\cite[Lemma 6.4]{Thomee}) and \eqref{err:3}-\eqref{err:4} we have
\[
\| \kappa_\vp^M \|_{L^\infty}^2 \leq C |\log(1/h)| \| \kappa_\vp^M \|_{H^1}^2 \leq C h^{-1}( h^4 + \tau^2) \leq C (h^3 + \tau).
\]
On the other hand, if $h \leq \tau$ then from \eqref{err:3} we infer 
\[
\| \Delta_h \kappa_\vp^k \|^2 \leq \frac{1}{\tau} \sum_{n=1}^k \tau \| \Delta_h \kappa_\vp^n \|^2 \leq C (h^4 \tau^{-1} + \tau) \leq C(h^3 + \tau),
\]
so that by the invoking the interpolation estimate for three spatial dimensions (see e.g.~\cite{Feng})
\[
\| \kappa_\vp^k \|_{L^\infty} \leq C \| \kappa_\vp^k \|^{\frac{1}{4}} (\| \kappa_\vp^k \|^2 + \| \Delta_h \kappa_\vp^h \|^2)^{\frac{3}{8}},
\]
it holds that
\[
\| \kappa_\vp^k \|_{L^\infty}^2 \leq C \| \kappa_\vp^k \|^2 + C \| \kappa_\vp^k \|^{1/2} \| \Delta_h \kappa_\vp^k \|^{3/2} \leq C(h^3 + \tau).
\]
Using the Sobolev embedding $W^{2,6}(\Omega) \subset W^{1,\infty}(\Omega)$ and \eqref{Ritz2} we have
\[
\| \kappa_\vp^M \|_{L^\infty} = \| \vp^M - R_h \vp^M \|_{L^\infty} \leq C h \ell_h \| \vp^M \|_{W^{1,\infty}},
\]
and so we can find constants $h_1 \in (0, h_0)$ and $\tau_1 > 0$ such that for all $h < h_1$, $\tau < \tau_1$,
\begin{align*}
\| \vp_h^M - \vp^M \|_{L^\infty} \leq \| \kappa_\vp^M \|_{L^\infty} + \| \vp^M - R_h \vp^M \|_{L^\infty} \leq C(h^{3/2} + \tau^{1/2}) + C h \ell_h\leq 1.
\end{align*}
This establishes \eqref{induction} for $k = M$. Hence, the estimates \eqref{err:3}-\eqref{err:5} hold for $k = M$ for all $h < h_1$ and $\tau < \tau_1$. On the other hand, if $h > h_1$ or $\tau > \tau_1$, then from the stability estimate \eqref{stab:est} of Lemma \ref{lem:stab} it holds that 
\begin{align*}
& \| \kappa_\vp^k \|_{H^1}^2 + \| \kappa_\theta^k \|_{H^1}^2 + |e^k|^2  \leq C + C \big ( \| R_h \vp^k \|_{H^1}^2 + \| R_h \theta^k \|_{H^1}^2 + |q^k|^2 \big ) \leq C
\end{align*}
with constant $C$ independent of $k$. In particular, we deduce that 
\begin{align}\label{err:6}
\| \kappa_\vp^k \|_{H^1}^2 + \| \kappa_\theta^k \|_{H^1}^2 + |e^k|^2 \leq C \leq C(\tau_1^{-2} + h_1^{-4})(\tau^2 + h^4)
\end{align}
if $\tau > \tau_1$ or $h > h_1$. Combining \eqref{err:3}-\eqref{err:3} and \eqref{err:6} we have that for any $\tau$ and $h$, 
\begin{align*}
\max_{1 \leq k \leq N_\tau} \big (\| \kappa_\vp^k \|_{H^1}^2 + \| \kappa_\theta^k \|_{H^1}^2 + |e^k|^2 \big ) \leq C(h^4 + \tau^2).
\end{align*}
Then, by \eqref{Ritz1} and the triangle inequality we have
\begin{align*}
& \max_{1 \leq k \leq N_\tau}  \big (\| \vp_h^k - \vp^k \|_{H^1}^2 + \| \theta_h^k - \theta^k \|_{H^1}^2 \big ) \\
& \quad  \leq C(h^4 + \tau^2) + \max_{1 \leq k \leq N_\tau}  \big (\| \vp^k - R_h \vp^k \|_{H^1}^2 + \| \theta^k - R_h \theta^k \|_{H^1}^2 \big ) \leq C(h^2 + \tau^2),
\end{align*}
as well as
\begin{align*}
& \max_{1 \leq k \leq N_\tau} \big ( \| \vp_h^k - \vp^k \|^2 + \| \theta_h^k - \theta^k \|^2 + |q_h^k - q^k|^2 \big )\\
& \quad  \leq C(h^4 + \tau^2) + \max_{1 \leq k \leq N_\tau}  \big (\| \vp^k - R_h \vp^k \|^2 + \| \theta^k - R_h \theta^k \|^2 \big ) \leq C(h^4 + \tau^2).
\end{align*}
\end{proof}

\begin{remark}
We are not able to derive estimates for the error $\bu - \bu_h$ due to the lack of $H^2$-spatial regularity for the displacement $\bu$. This is despite \eqref{model:3} being a linear elasticity system with homogeneous Dirichlet conditions, as the spatially varying elasticity tensor $\CC(\vp)$ complicates the regularity arguments for elliptic systems.
\end{remark}

\section{Numerical simulations}\label{sec:simulation}
%\alert{AL: The reviewer may pick up on the inconsistency between Q1 elements for simulation and P1 elements for analysis.  Let's try to avoid that}

In this section, we provide numerical simulations of the model \eqref{model} in a square domain $\Omega = (0,1)^2$ discretized into uniform meshes of size $h$. 
%, i.e., $\Omega$ is discretized into squares with an equal size of $h\times h$.
%We adopt the Q1 finite element method for the spatial discretization. 
All simulations are performed until $T=1.0$, employing a uniform time step size $\tau$.
For the readers' convenience, we state the formulae for the nonlinear functions and relevant parameters below:
\[
  \begin{aligned}
    W(\vp)             & = \frac{1}{4}(\vp^2 - 1)^2, \quad W'(\vp) = \vp(\vp^2 - 1),                                                                                                                                                \\
    P(\vp)             & = \frac{1}{2}(1-\vp), \quad p(\vp) = P'(\vp)=-\frac12,                                                                                                                                                     \\
    \CC(\vp)           & = \left[(1 - k(\vp))\kappa + k(\vp)\right] \CC^{(1)}, \quad k(\vp)=\begin{cases}
                                                                                                    0                                                       & \text{if } -1 \leq \vp \leq \vp_{\mathrm{gel}}, \\
                                                                                                    \frac{\vp - \vp_{\mathrm{gel}}}{1 - \vp_{\mathrm{gel}}} & \text{if } \vp_{\mathrm{gel}} \leq \vp \leq 1,
                                                                                                  \end{cases} \\
    \CC^{(1)}_{ijmn}   & = \frac{E\nu}{(1+\nu)(1-2\nu)}\delta_{ij} \delta_{mn} + \frac{E}{2(1+\nu)}(\delta_{im} \delta_{jn} + \delta_{in} \delta_{jm}),                                                                             \\
    \mathcal{E}_c(\vp) & = m(\vp)\mathbb{I}, \quad m(\vp)=\zeta (1-P(\vp)) = \frac{\zeta}{2}(1+\vp).
  \end{aligned}
\]
Our implementation is facilitated by the Python packages NumPy and SciPy, and a sample of the codes including the experiment settings can be found on GitHub\footnote{\url{https://github.com/Laphet/sav-stereolithography}}.

We first validate the convergence rates stated in Theorem \ref{thm:errest}. As it is difficult to obtain the exact solutions of the model \eqref{model} due to the nonlinearities and the complex coupling between the phase field, temperature, and elastic variables, we thus introduce source terms to \eqref{model:1}-\eqref{model:3} such that
\begin{equation}\label{eq:refer solutions}
  \begin{alignedat}{2}
    \vp(x, y, t) & =\cos t \cos(2\pi x)\cos(\pi y), \\ 
    \theta(x, y, t) & =\sin t \cos(\pi x)\cos(2 \pi y), \\
    \bu(x, y, t) & = \begin{bmatrix} \sin t \sin(\pi x)\sin(2\pi y) \\ \cos t \sin(2 \pi x) \sin(\pi y) \end{bmatrix},
  \end{alignedat}
\end{equation}
is our exact solution that fulfils the boundary conditions. The model parameters are set as
\[
  \begin{alignedat}{2}
    \alpha = 1.0,\quad \lambda = 1.0, \quad \eps = 0.1, \quad \gamma = 1.0, \quad \theta_c = 0.0, \quad \delta = 1.2, \\
    \kappa = 0.01, \quad \vp_{\mathrm{gel}} = 0.5, \quad E = 1.0, \quad \nu = 0.3, \quad \zeta = 1.0, \quad \beta = 0.5.
  \end{alignedat}
\]
We conducted two groups of experiments to validate the convergence rates with respect to the mesh size $h$ and the time step size $\tau$, respectively. In the first group, we set $\tau$ to the values of $1/100$ and $1/200$, while varying the mesh size $h$ from $1/8$, $1/16$, $1/32$, $1/64$, and $1/128$. In the second group, we set $h$ to the values of $1/100$ and $1/200$, while varying the time step size $\tau$ from $1/10$, $1/20$, $1/40$, $1/80$, and $1/160$. To simplify the implementation, we calculated the errors by comparing the numerical solutions with the nodal interpolations of \eqref{eq:refer solutions} in difference norms, rather than the exact solutions. The results are shown in Figures \ref{fig:convergence rate phi}, \ref{fig:convergence rate theta}, and \ref{fig:convergence rate u}, where $\| \cdot\|_0$ denotes the $l^\infty(L^2)$ norm and $|\cdot|_1$ represents the $l^\infty(H^1)$ seminorm. 

We observe from Figure \ref{fig:convergence rate phi} that the convergence rates of the phase field are well matched with the theoretical results, i.e., $\mathcal{O}(h^2+\tau)$ for the $l^\infty(L^2)$ norm and $\mathcal{O}(h+\tau)$ for the $l^\infty(H^1)$ seminorm. From the subplot (a) in Figure \ref{fig:convergence rate theta}, we notice a $\mathcal{O}(h^2)$ superconvergence phenomenon for the temperature field, which may attributed to the structure of \eqref{model:2} that only consists of a homogeneous elliptic operator and uniform meshes are utilized. However, the theoretical convergence rate $\mathcal{O}(h^2+\tau)$ is still satisfied. Even if we did not derive a theoretical convergence rate estimate for the displacement field, nevertheless, based on Figure \ref{fig:convergence rate u}, we can conjecture that the optimal rates in $l^\infty(L^2)$ and $l^\infty(H^1)$ are both $\mathcal{O}(h+\tau)$. Here, we cannot see an optimal rate $\mathcal{O}(h^2+\tau)$ for $l^\infty(L^2)$ as in the phase field and temperature variables, the reason could be attributed to the presence of the nonsmooth elasticity tensor $\CC(\vp)$ in the elasticity equation \eqref{model:3}.

\begin{figure}[!ht]
  \centering
  \includegraphics[width=\textwidth]{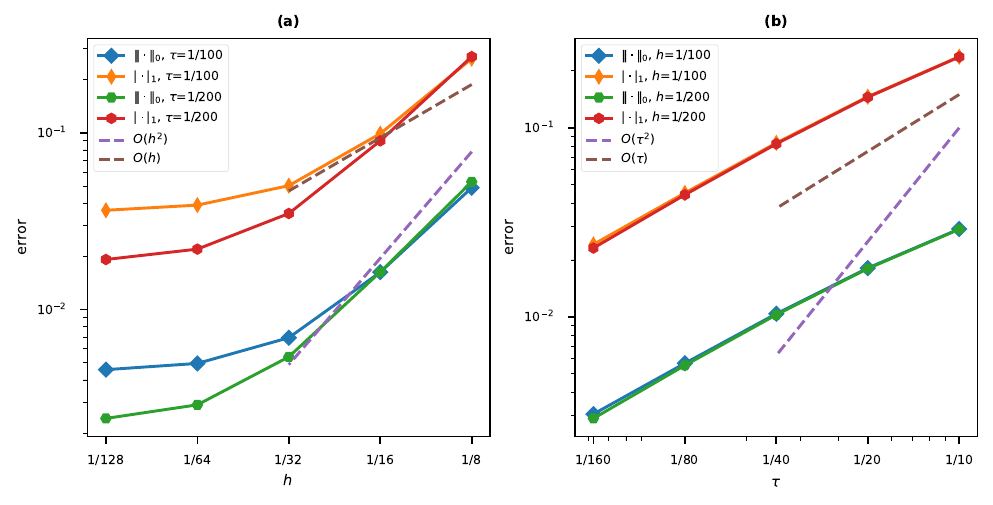}
  \caption{Errors of the phase field $\vp$: \textbf{(a)} the x-axis corresponds to the mesh size $h$; \textbf{(b)} the x-axis corresponds to the time step size $\tau$.}\label{fig:convergence rate phi}
\end{figure}

\begin{figure}[!ht]
  \centering
  \includegraphics[width=\textwidth]{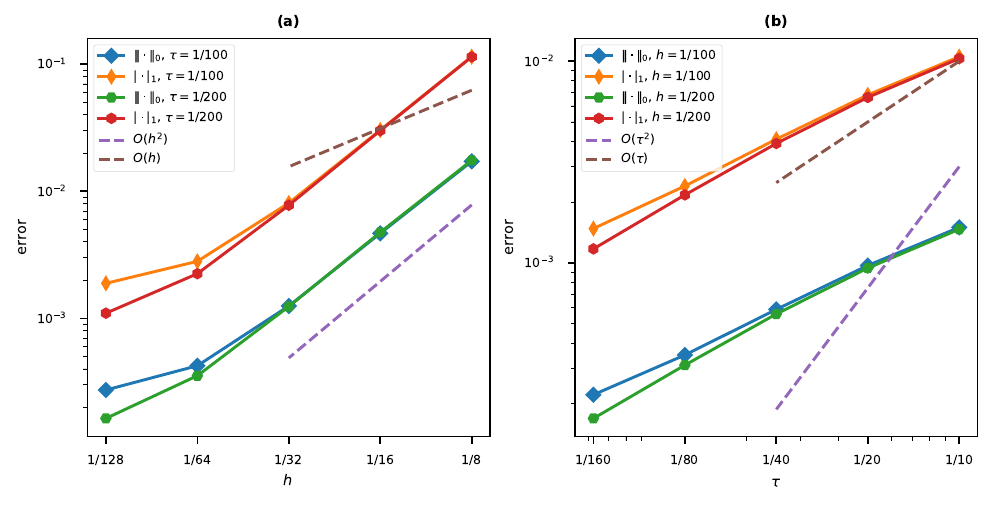}
  \caption{Errors of the temperature field $\theta$: \textbf{(a)} the x-axis corresponds to the mesh size $h$; \textbf{(b)} the x-axis corresponds to the time step size $\tau$.}\label{fig:convergence rate theta}
\end{figure}

\begin{figure}[!ht]
  \centering
  \includegraphics[width=\textwidth]{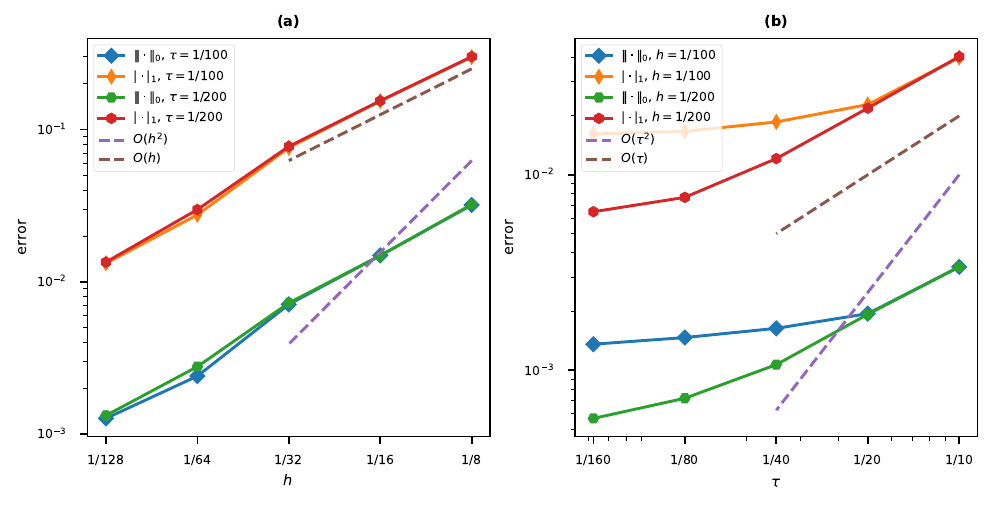}
  \caption{Errors of the displacement field $\bu$: \textbf{(a)} the x-axis corresponds to the mesh size $h$; \textbf{(b)} the x-axis corresponds to the time step size $\tau$.}\label{fig:convergence rate u}
\end{figure}

We then simulate the gel-sol convention process induced by laser irradiation that serves as an external heat source, which enters the temperature equation \eqref{model:2} as
\[
  I(x, y, t)=I_{\mathrm{m}} \exp\left (-\frac{(x-x^*(t))^2 + (y-y^*(t))^2}{w_0^2} \right ),
\]
with the laser center $(x^*(t), y^*(t))$ moves along a path to reflect the typical 3D printing process, $I_{\mathrm{m}}$ represents the maximum intensity, and $w_0$ controls the width of the heat source. In the following simulations we set the parameters for the heat source as
\[
  I_{\mathrm{m}} = 4.0\times 10^4, \quad w_0 = 0.015.
\]
As for the other parameters of the model, we choose the following values
\[
  \begin{aligned}
    \alpha = 0.5, \quad \lambda = 1.0, \quad \eps = 5.0\times 10^{-3}, \quad \gamma = 4.0\times 10^2, \quad \theta_c = 1.0, \quad \delta = 1.0\times 10^2, \\
    \kappa = 10^{-6}, \quad \vp_{\mathrm{gel}} = 0.5, \quad E = 10^4, \quad \nu = 0.35, \quad \zeta = 10^3, \quad \beta = 5.0\times 10^2.
  \end{aligned}
\]
For the initial conditions, we set
\[
  \phi(x, y, 0)=-1.0, \quad \theta(x, y, 0)=0.0,
\]
implying that the material is initially in a fully sol phase and in thermal equilibrium. We take the mesh size $h=1/400$ to resolve the interfacial layer between the sol and gel phases, and the time step size $\tau=1/100$.

In the first simulation, we consider a fixed heat source located at the center of the domain, i.e., $(x^*(t), y^*(t))\equiv(0.5, 0.5)$ for all $t \in [0,1.0]$.
The plots for the phase field $\vp$, the temperature field $\theta$, and the displacement field $\bu$ at $t=0.01$, $0.05$, $0.10$, and $0.20$ are displayed in Figures \ref{fig:fixed-heat-source-phi}, \ref{fig:fixed-heat-source-theta}, and \ref{fig:fixed-heat-source-u}, respectively.
We observe that the phase field $\vp$ evolves from the sol phase $\{\vp = -1\}$ to the gel phase $\{\vp = 1\}$, and the temperature field $\theta$ increases around the heat source.
The displacement field $\bu$ is also affected by the temperature field, and the material is deformed due to the thermal expansion and shinkage strains.
Notably, the deformation is more pronounced at the boundary of the heat source, aligning with the physical phenomenon of laser-induced curing. Ultimately, the physical fields reach their steady states seemingly around $t=0.20$.

\begin{figure}[!ht]
  \centering
  \begin{subfigure}[b]{0.24\textwidth}
    \includegraphics[width=\textwidth]{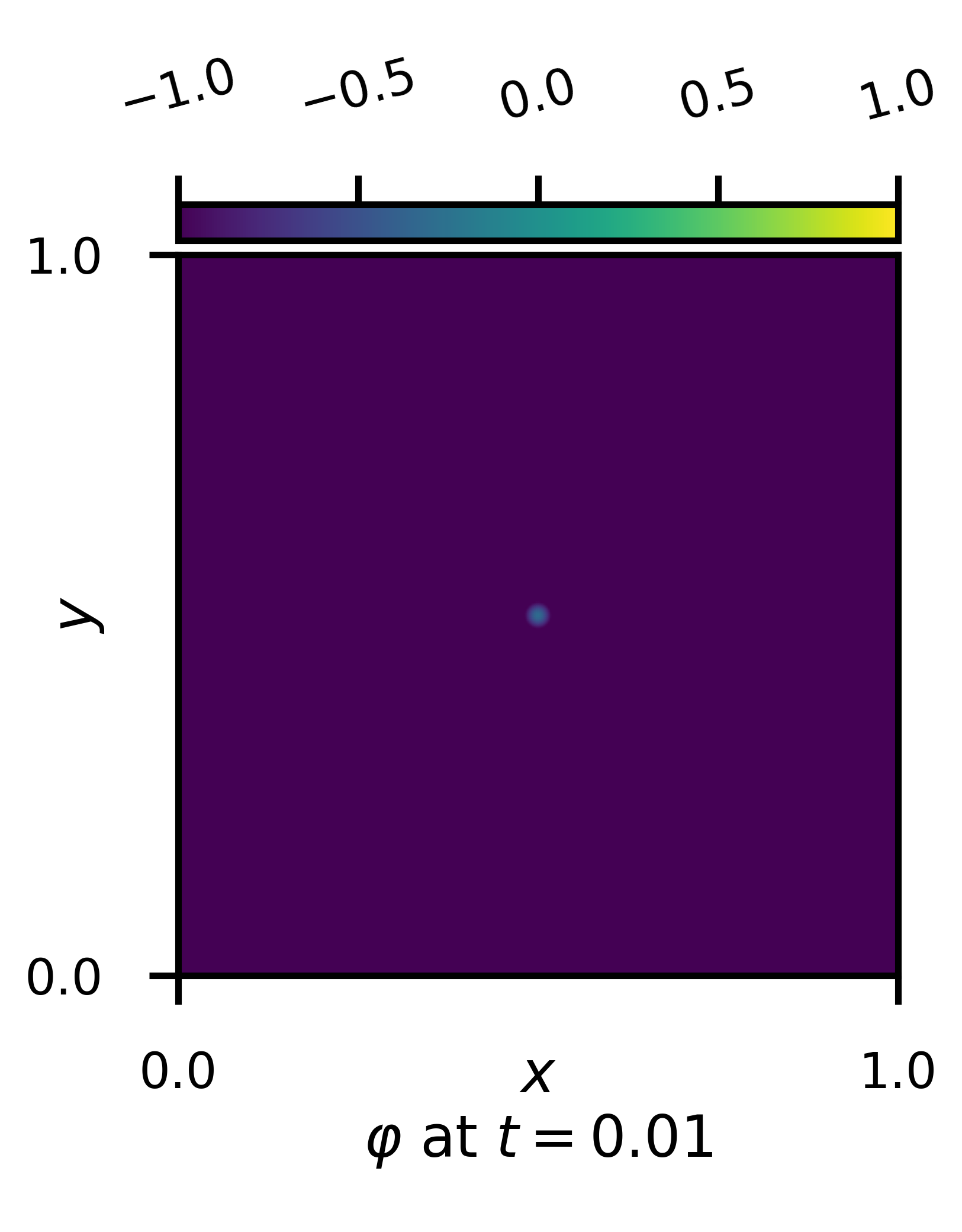}
  \end{subfigure}
  \begin{subfigure}[b]{0.24\textwidth}
    \includegraphics[width=\textwidth]{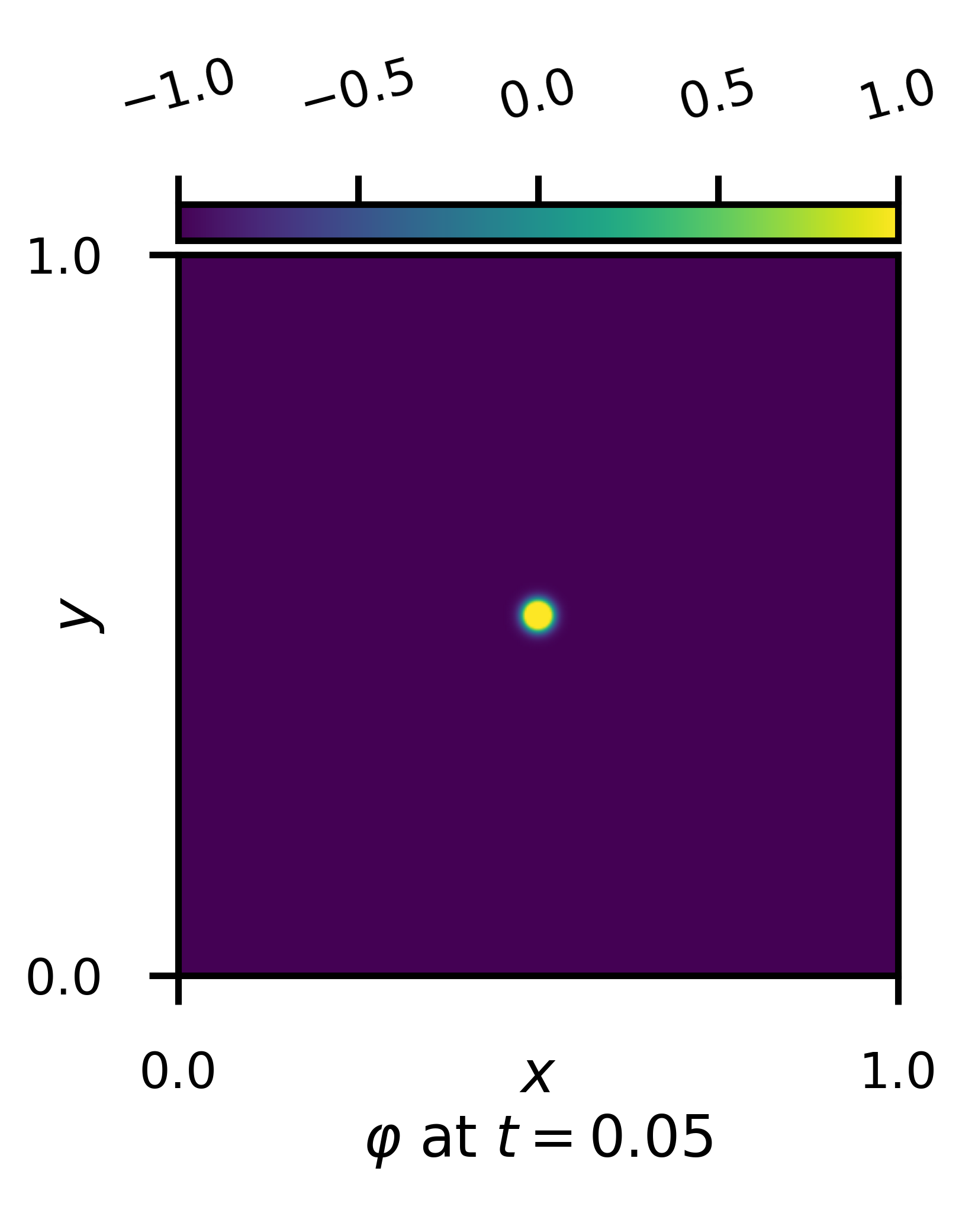}
  \end{subfigure}
  \begin{subfigure}[b]{0.24\textwidth}
    \includegraphics[width=\textwidth]{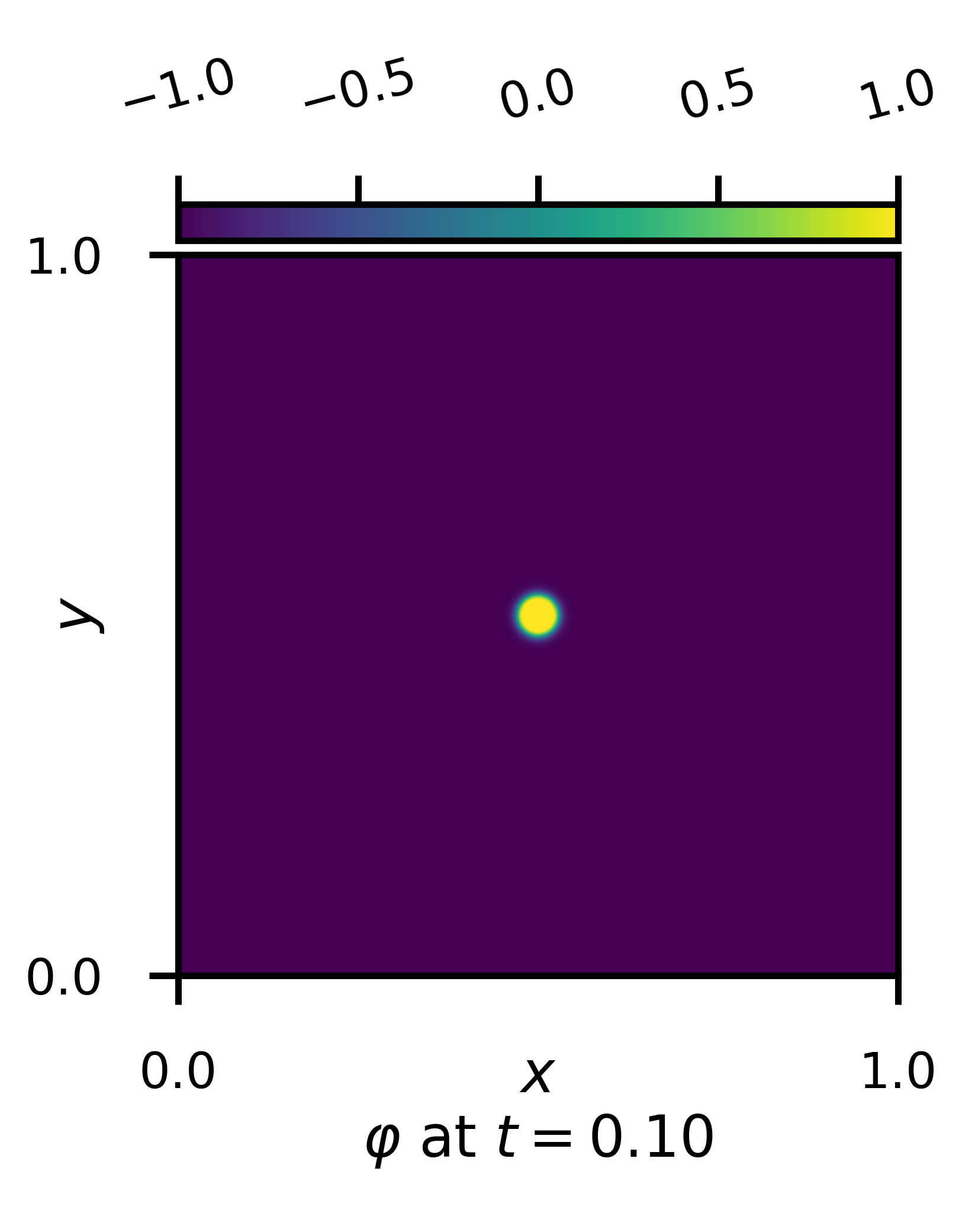}
  \end{subfigure}
  \begin{subfigure}[b]{0.24\textwidth}
    \includegraphics[width=\textwidth]{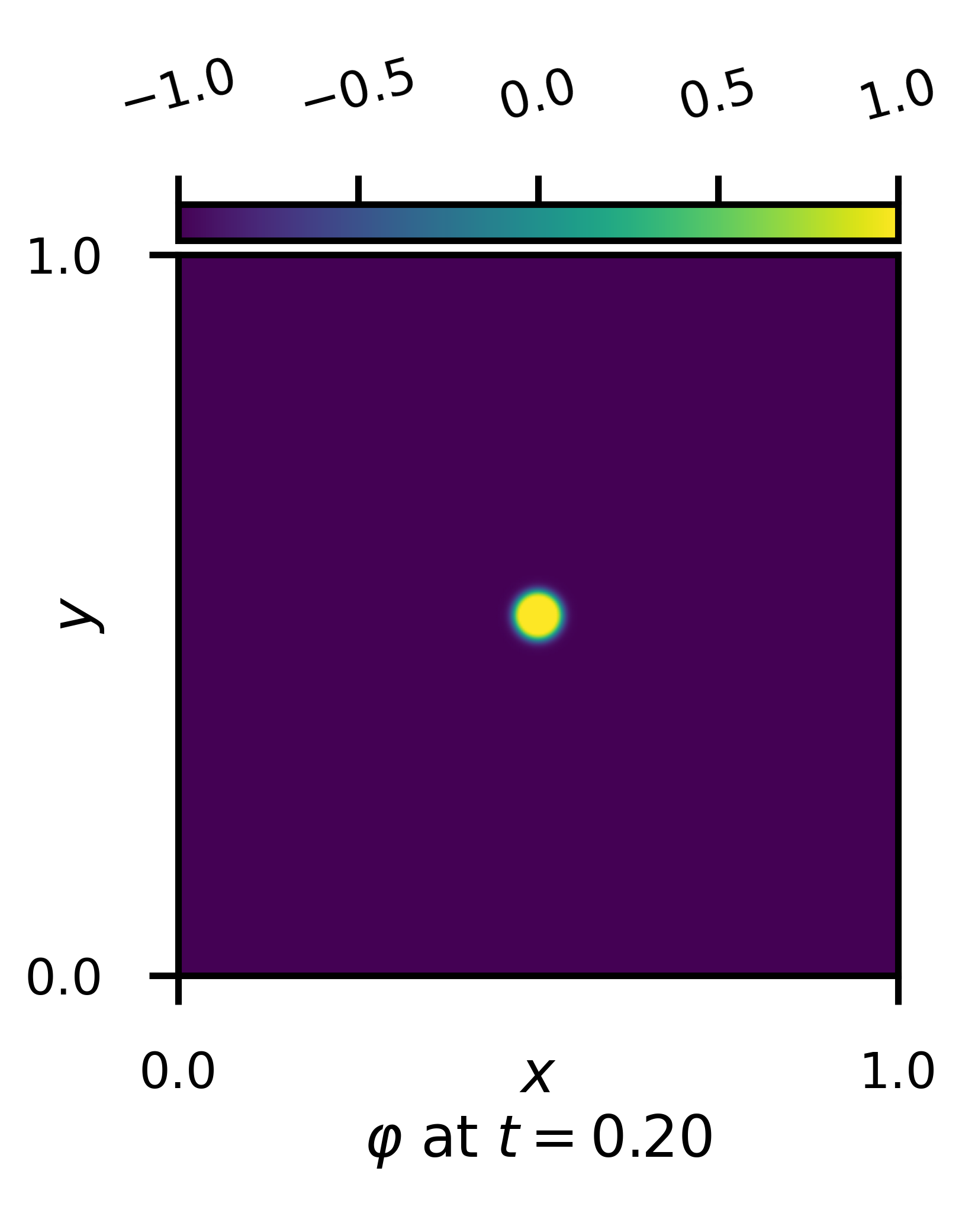}
  \end{subfigure}
  \caption{In the fixed source heat source simulation, the four subplots display the phase field $\vp$ at $t=0.01$, $0.05$, $0.10$, and $0.20$.}\label{fig:fixed-heat-source-phi}
\end{figure}

\begin{figure}[!ht]
  \centering
  \begin{subfigure}[b]{0.24\textwidth}
    \includegraphics[width=\textwidth]{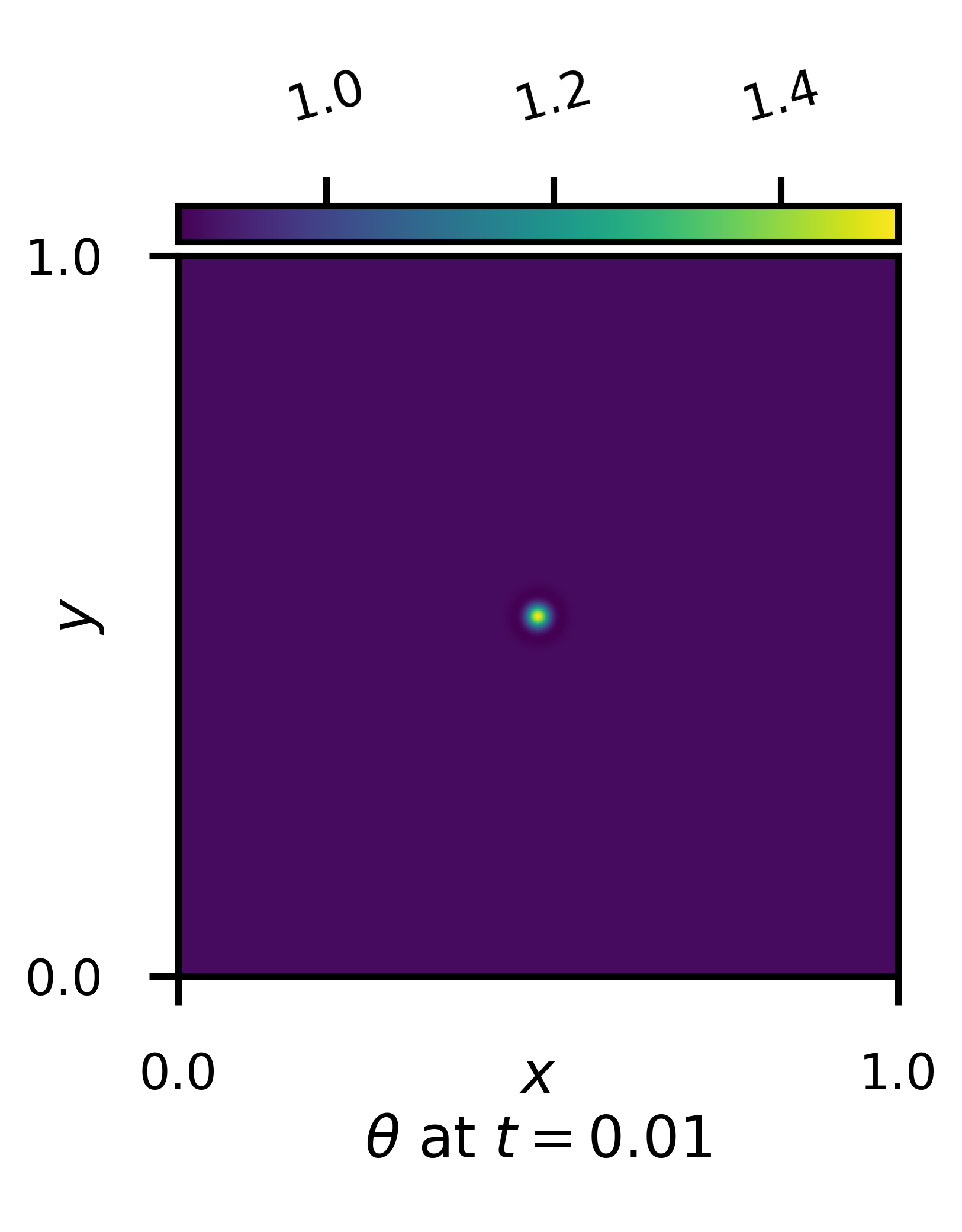}
  \end{subfigure}
  \begin{subfigure}[b]{0.24\textwidth}
    \includegraphics[width=\textwidth]{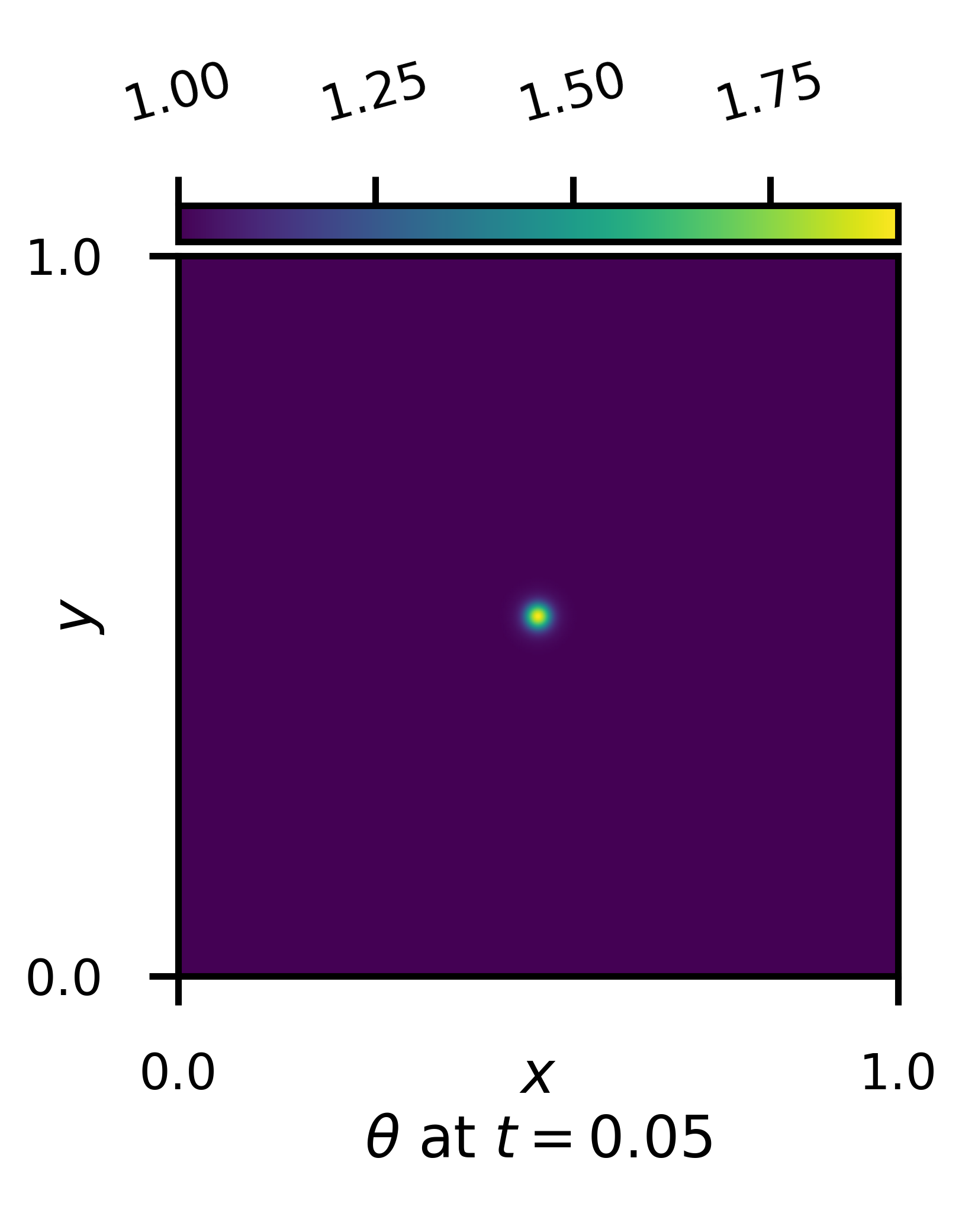}
  \end{subfigure}
  \begin{subfigure}[b]{0.24\textwidth}
    \includegraphics[width=\textwidth]{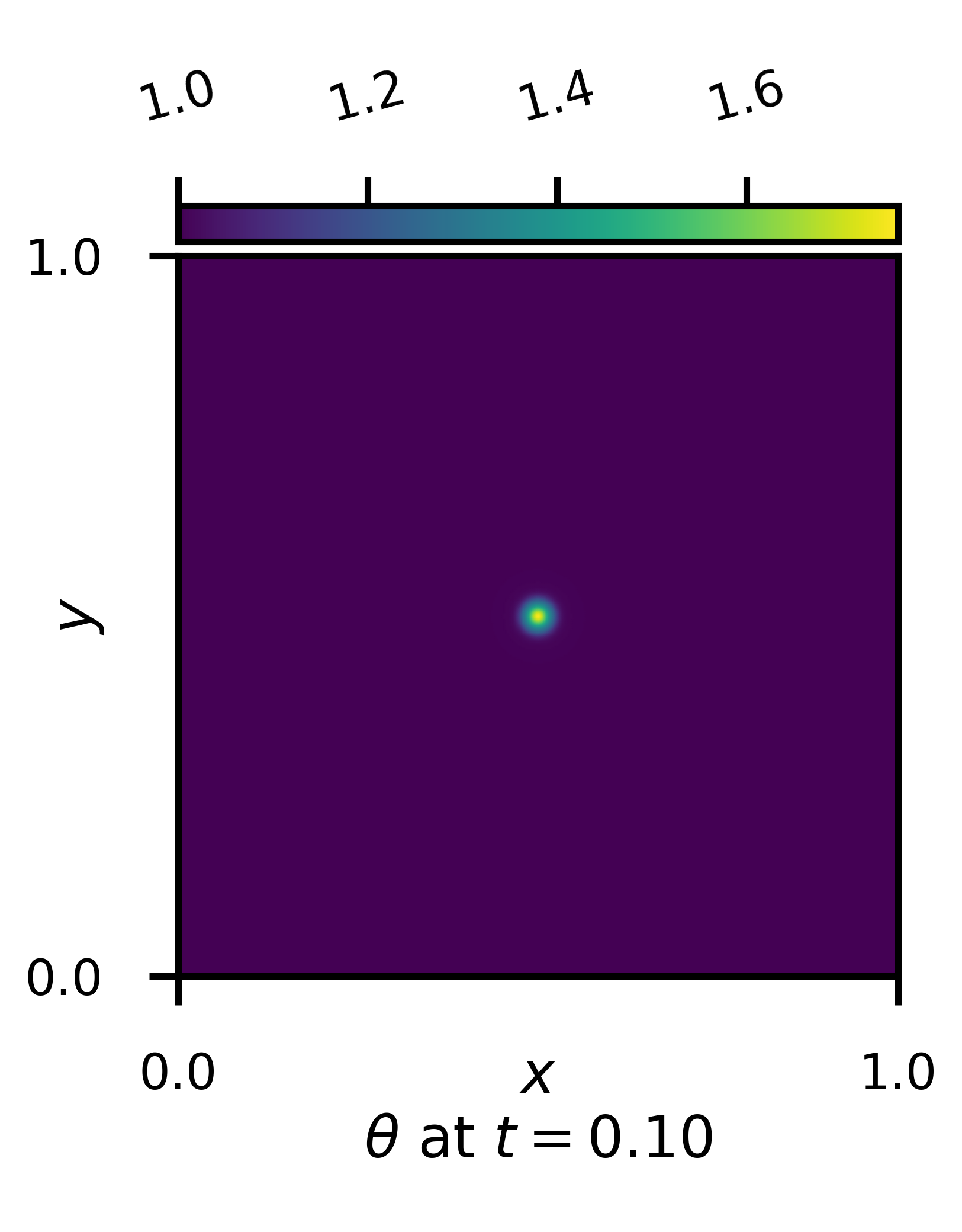}
  \end{subfigure}
  \begin{subfigure}[b]{0.24\textwidth}
    \includegraphics[width=\textwidth]{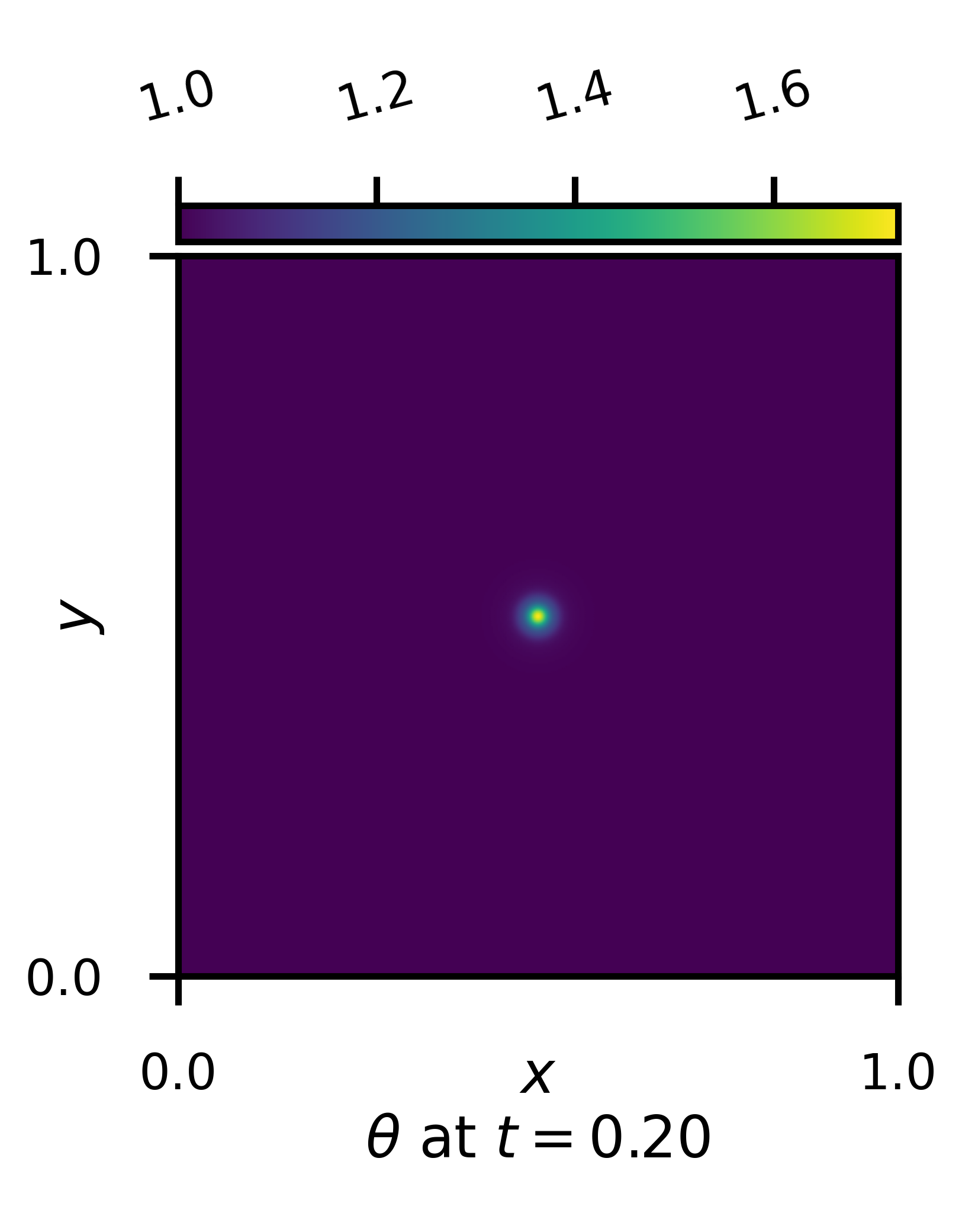}
  \end{subfigure}
  \caption{In the fixed source heat source simulation, the four subplots display the temperature field $\theta$ at $t=0.01$, $0.05$, $0.10$, and $0.20$.}\label{fig:fixed-heat-source-theta}
\end{figure}

\begin{figure}[!ht]
  \centering
  \begin{subfigure}[b]{0.24\textwidth}
    \includegraphics[width=\textwidth]{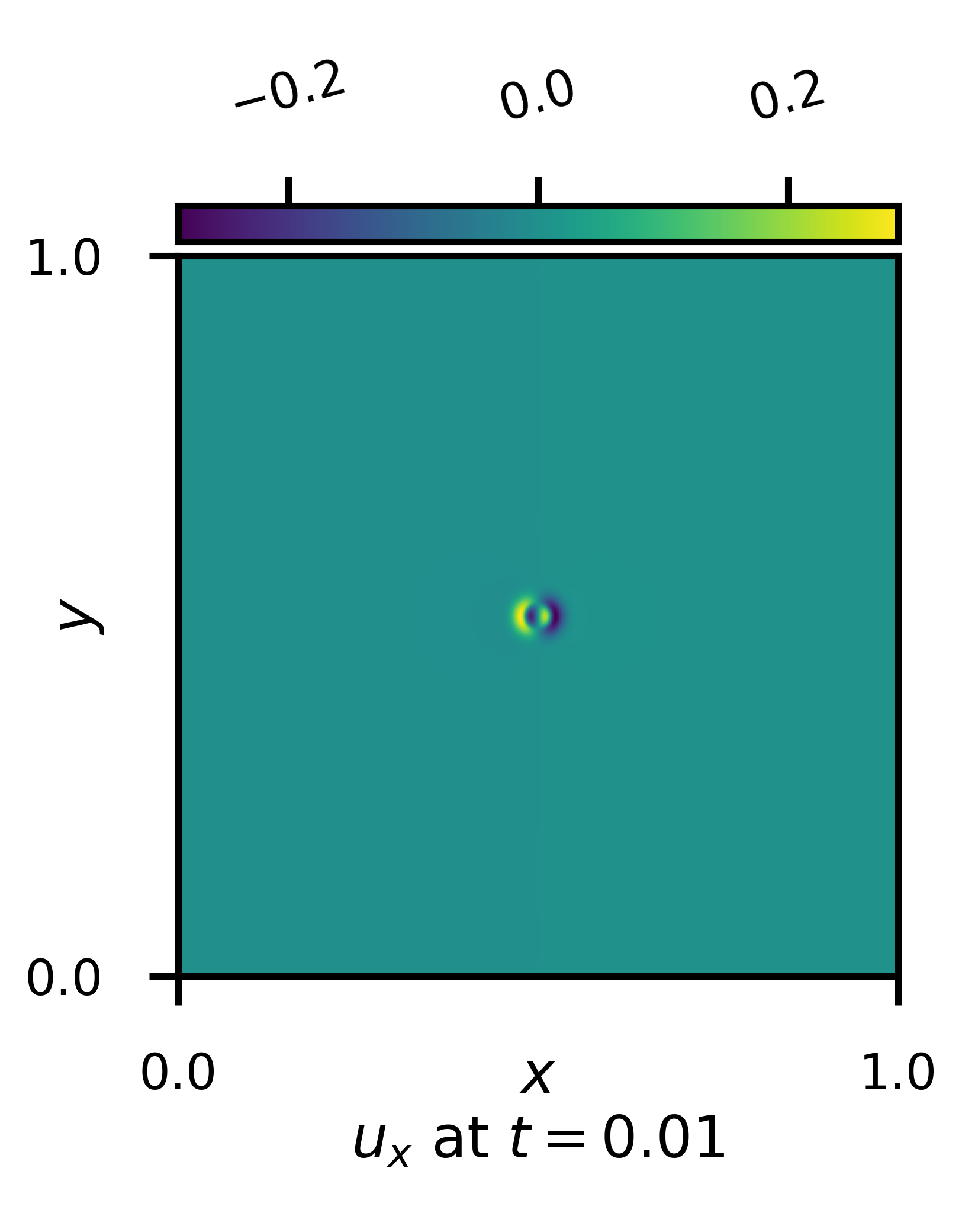}
  \end{subfigure}
  \begin{subfigure}[b]{0.24\textwidth}
    \includegraphics[width=\textwidth]{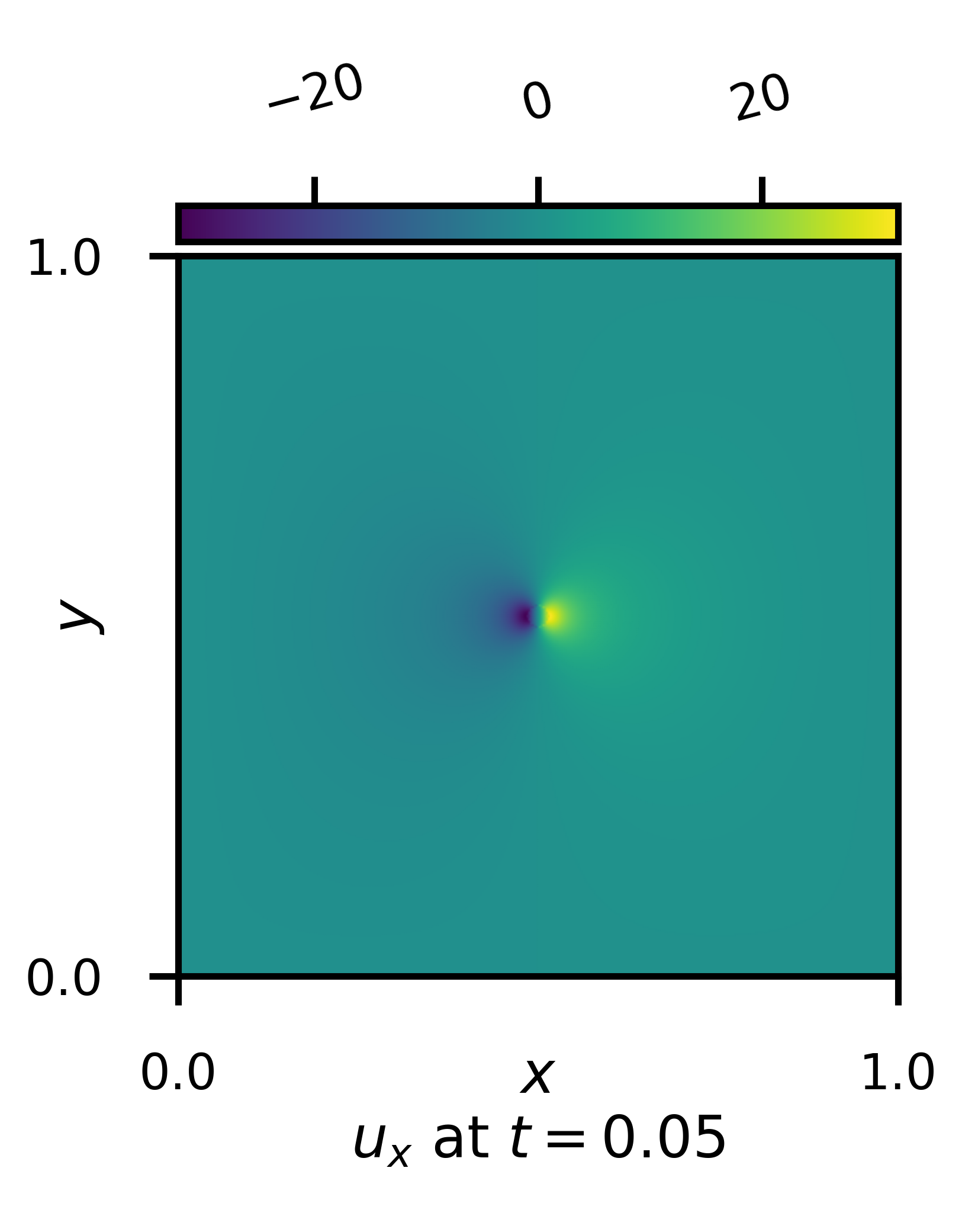}
  \end{subfigure}
  \begin{subfigure}[b]{0.24\textwidth}
    \includegraphics[width=\textwidth]{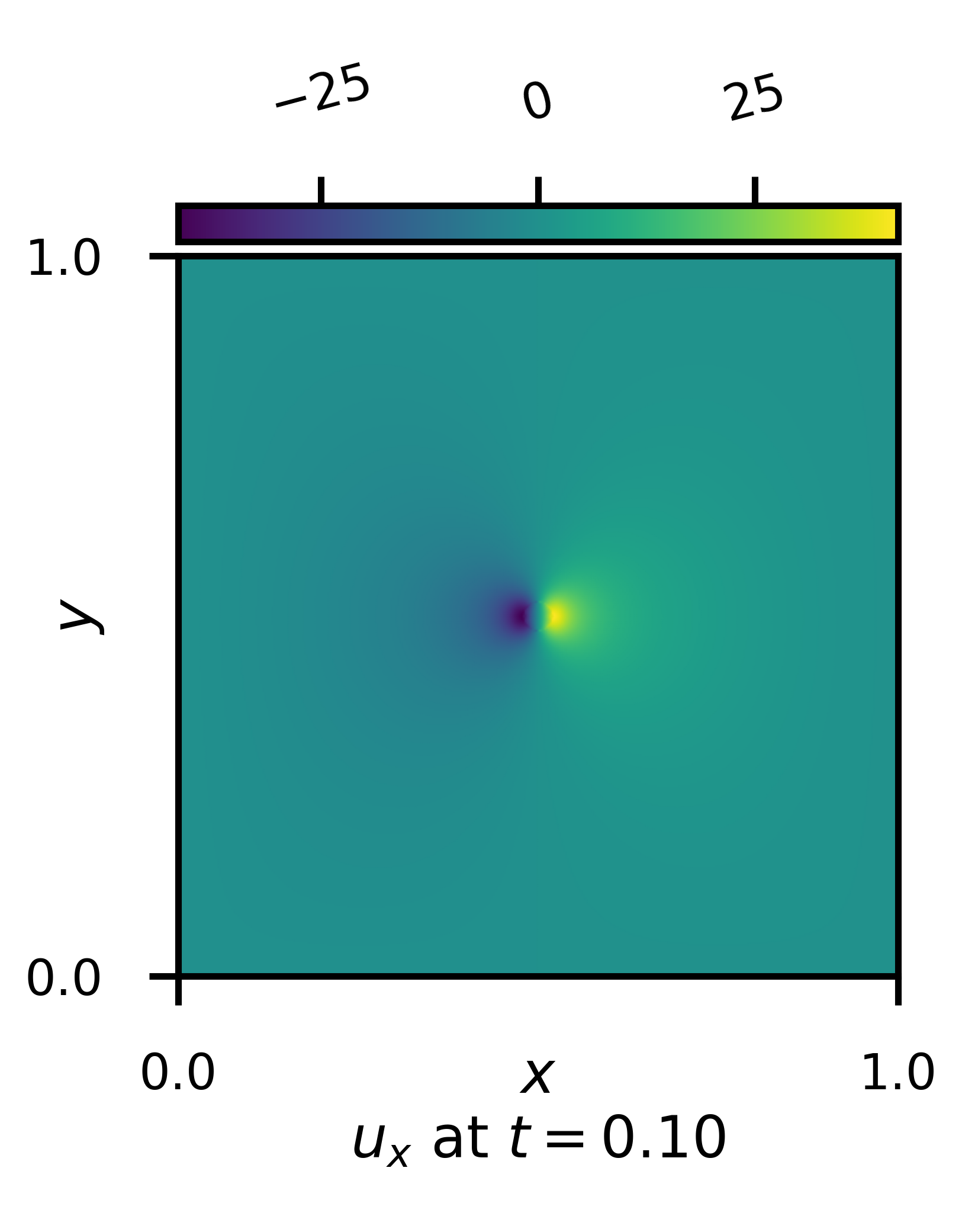}
  \end{subfigure}
  \begin{subfigure}[b]{0.24\textwidth}
    \includegraphics[width=\textwidth]{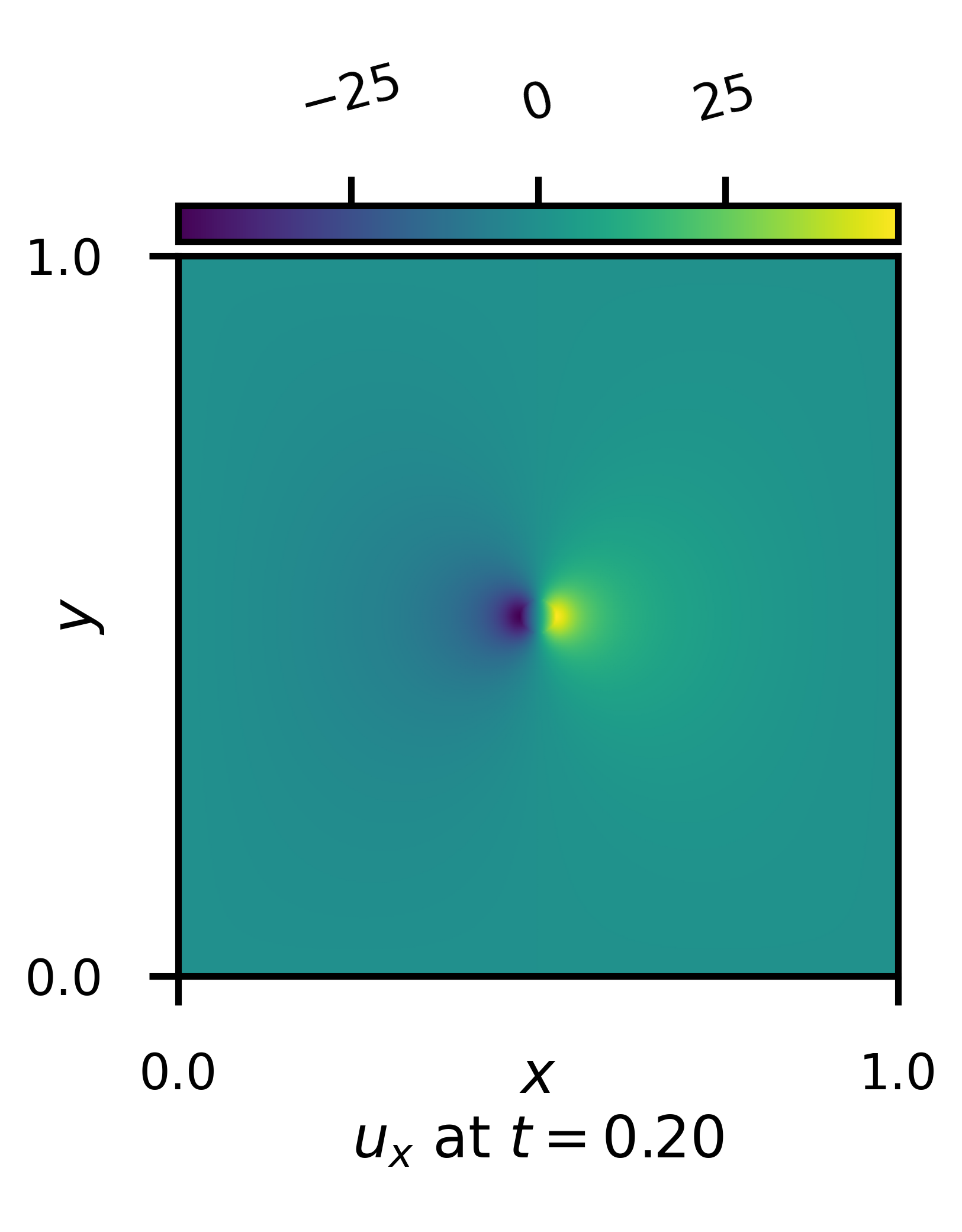}
  \end{subfigure}
  \begin{subfigure}[b]{0.24\textwidth}
    \includegraphics[width=\textwidth]{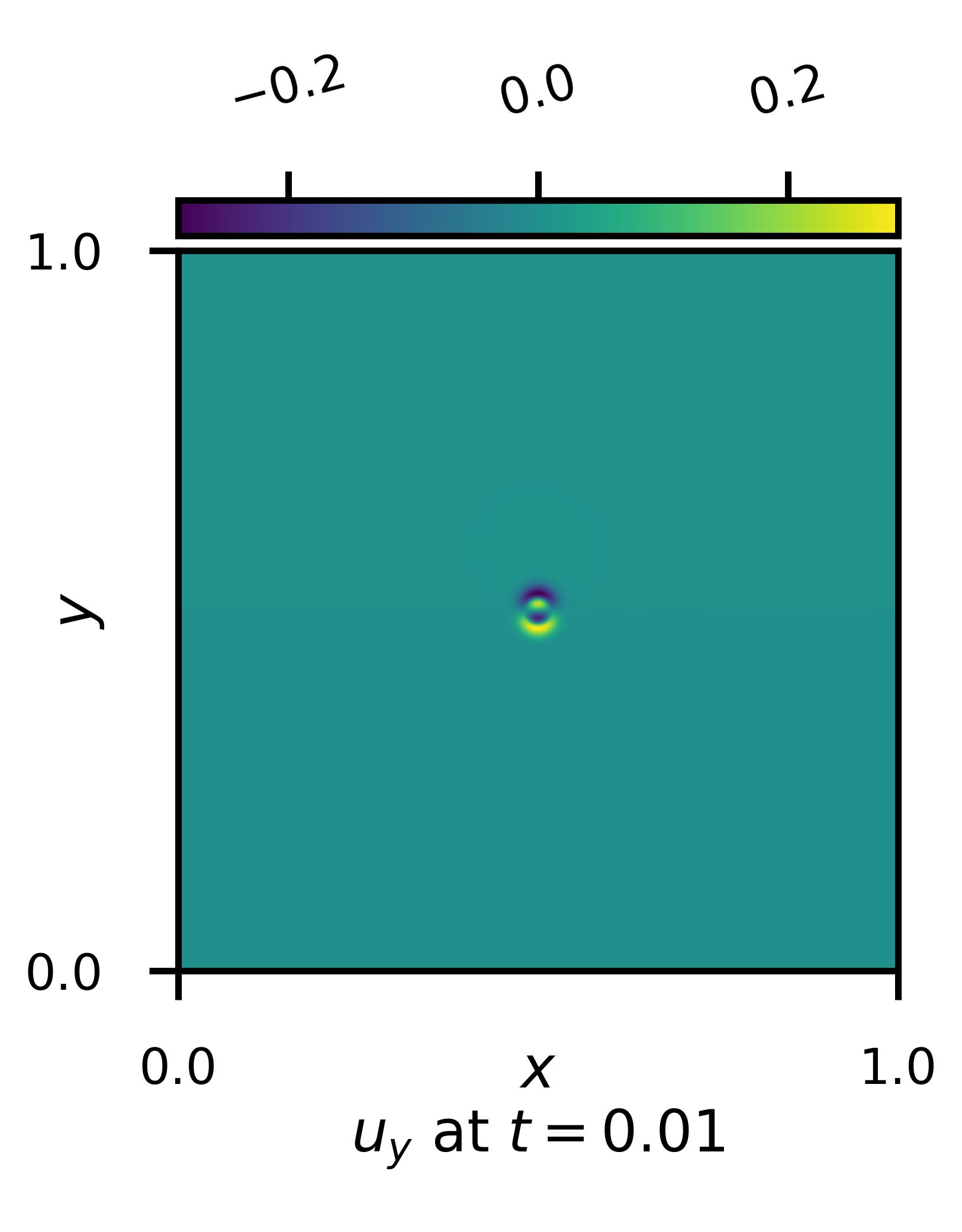}
  \end{subfigure}
  \begin{subfigure}[b]{0.24\textwidth}
    \includegraphics[width=\textwidth]{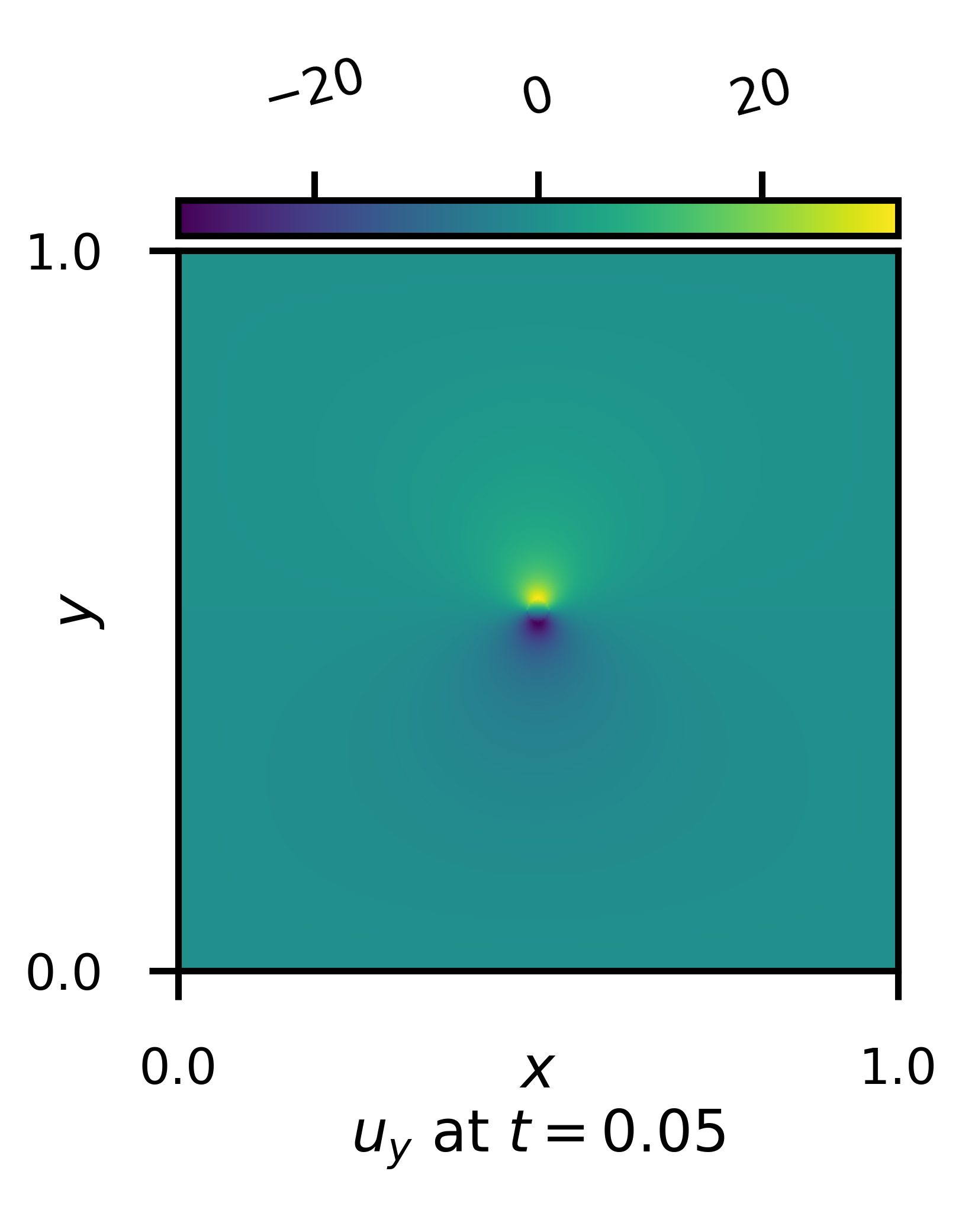}
  \end{subfigure}
  \begin{subfigure}[b]{0.24\textwidth}
    \includegraphics[width=\textwidth]{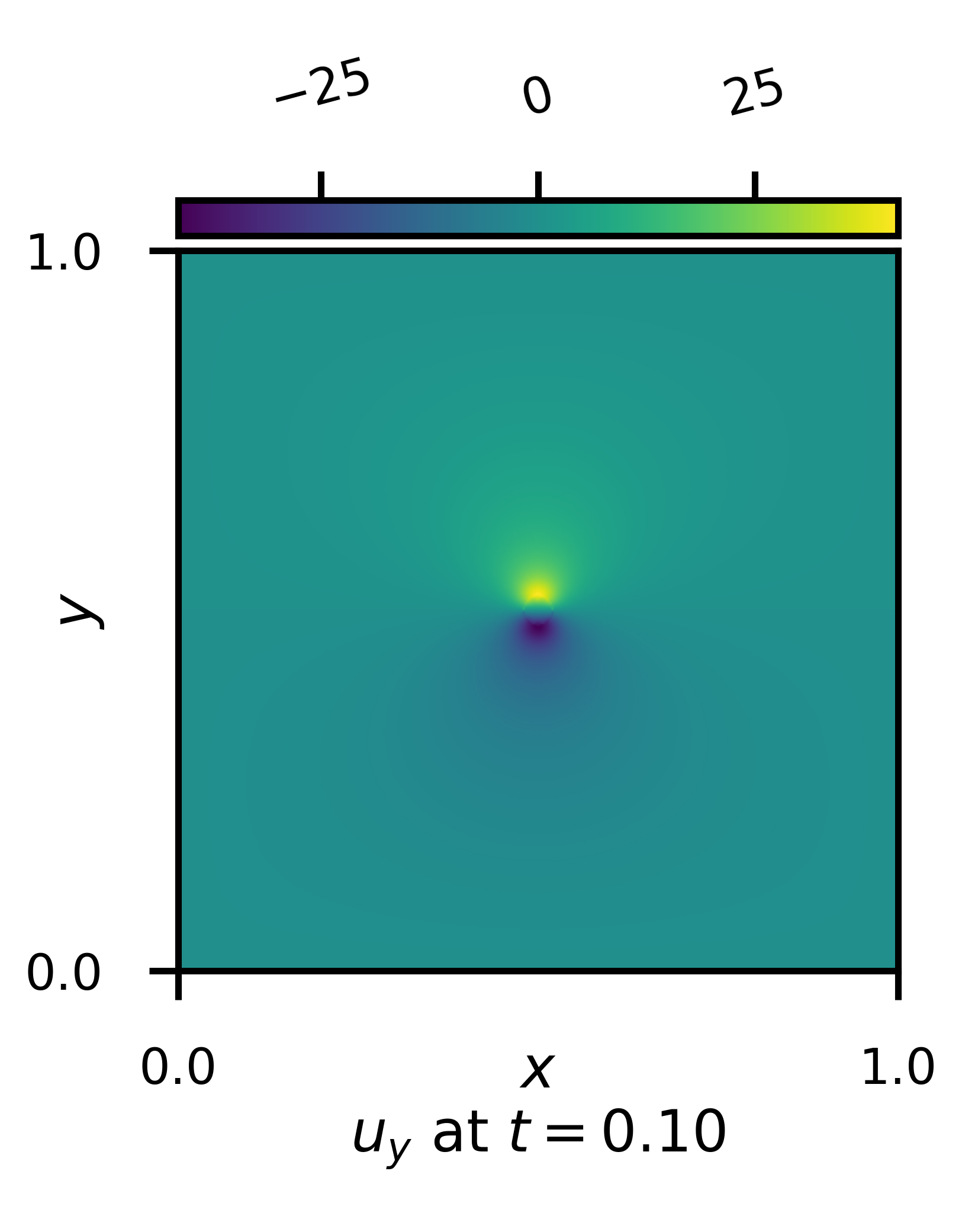}
  \end{subfigure}
  \begin{subfigure}[b]{0.24\textwidth}
    \includegraphics[width=\textwidth]{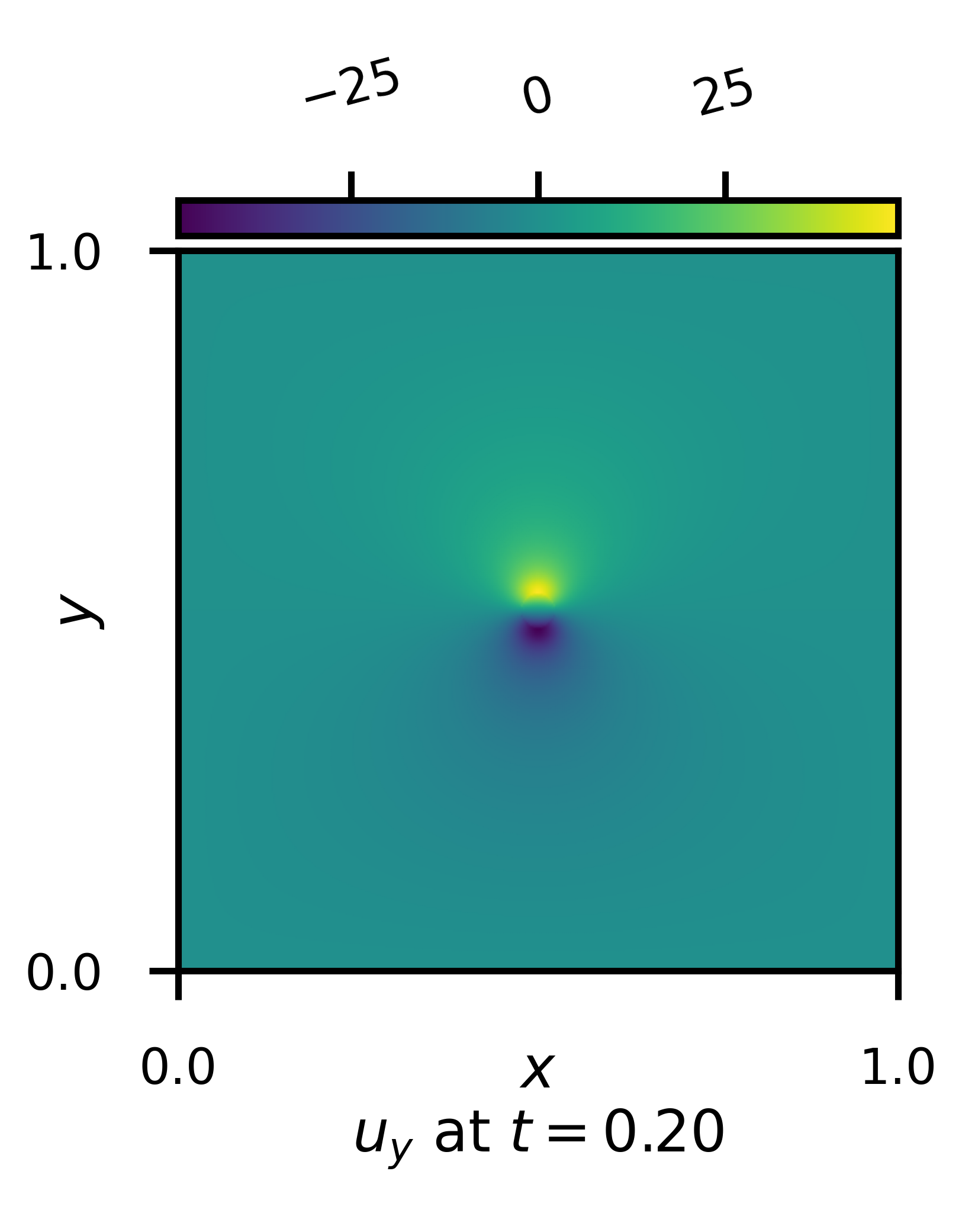}
  \end{subfigure}
  \caption{In the fixed source heat source simulation, the four subplots in the first/second row display the first/second component of the displacement field at $t=0.01$, $0.05$, $0.10$, and $0.20$, respectively.}\label{fig:fixed-heat-source-u}
\end{figure}

In the second simulation, we introduce a moving heat source.
The movement of the heat source is as follows: for $t\in [0, 1/3]$, $(x^*, y^*)$ moves from $(1/4, 5/6)$ to $(1/2, 1/2)$ at a constant speed; for $t\in (1/3, 2/3]$, $(x^*, y^*)$ moves from $(1/2, 1/6)$ to $(1/2, 1/2)$ at a constant speed; and for $t\in (2/3, 1]$, $(x^*, y^*)$ moves from $(3/4, 5/6)$ to $(1/2, 1/2)$ at a constant speed.
The trajectory of the heat source visually resembles the letter ``Y''.
Figures \ref{fig:moving-heat-source-phi}, \ref{fig:moving-heat-source-theta}, and \ref{fig:moving-heat-source-u} display the phase field $\vp$, temperature field $\theta$, and displacement field $\bu$, respectively, at time instances $t=0.34$, $0.50$, $0.67$, and $1.0$.
From Figure \ref{fig:moving-heat-source-phi}, we can observe that as the heat source moves, gel forms along the path of the heat source, and gel cannot return to the sol phase when the heat source moves away, resulting in the gradual formation of the letter ``Y''.
In contrast, the behavior of the temperature field $\theta$, as depicted in Figure \ref{fig:moving-heat-source-theta}, is distinct. We observe an immediate dispersion of $\theta$ once the heat source moves away, and the maximum temperature is consistently located at the heat source.
Similarly, Figure \ref{fig:moving-heat-source-u} demonstrates that mechanical effects concentrate around the gel phase.

\begin{figure}[!ht]
  \centering
  \begin{subfigure}[b]{0.24\textwidth}
    \includegraphics[width=\textwidth]{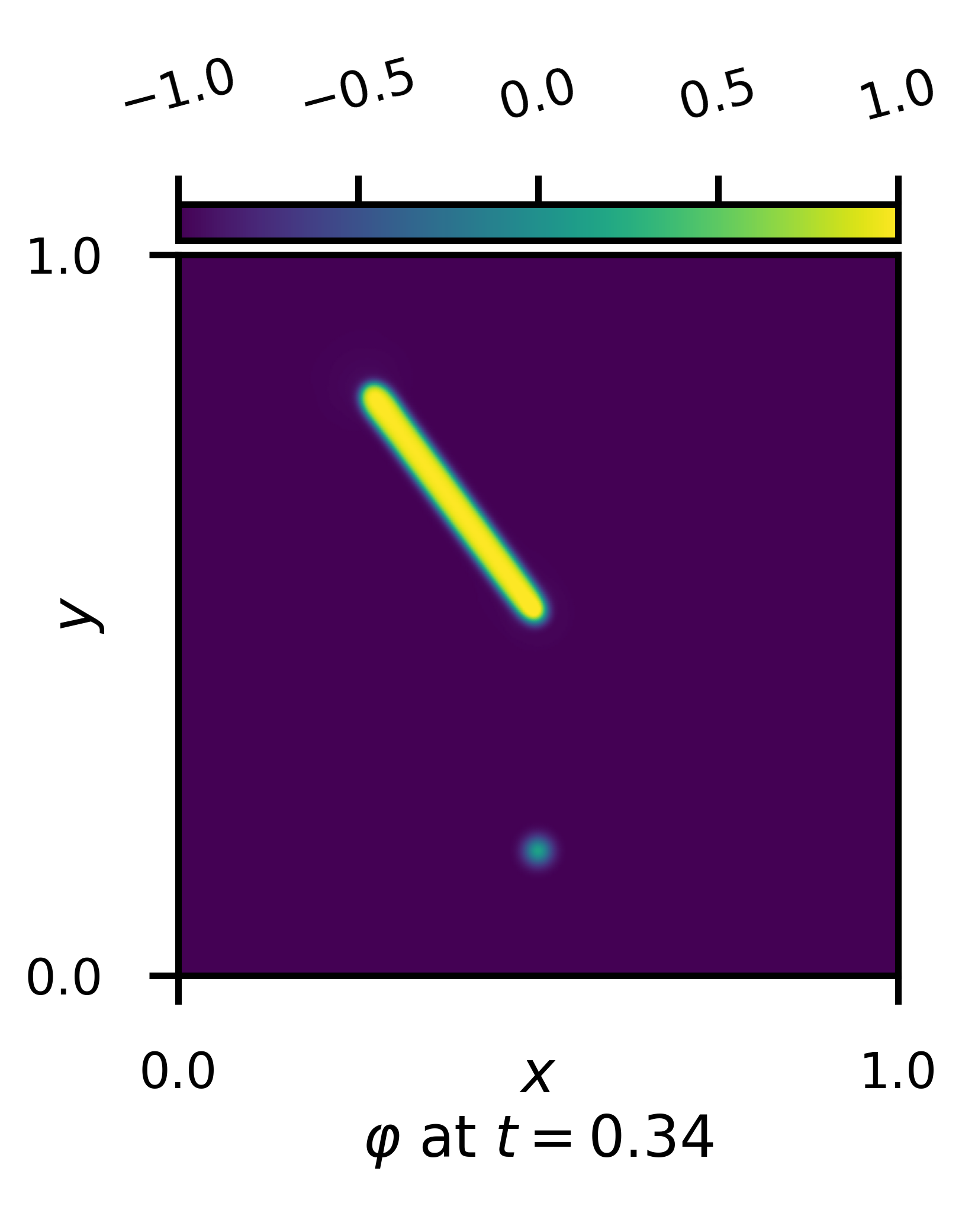}
  \end{subfigure}
  \begin{subfigure}[b]{0.24\textwidth}
    \includegraphics[width=\textwidth]{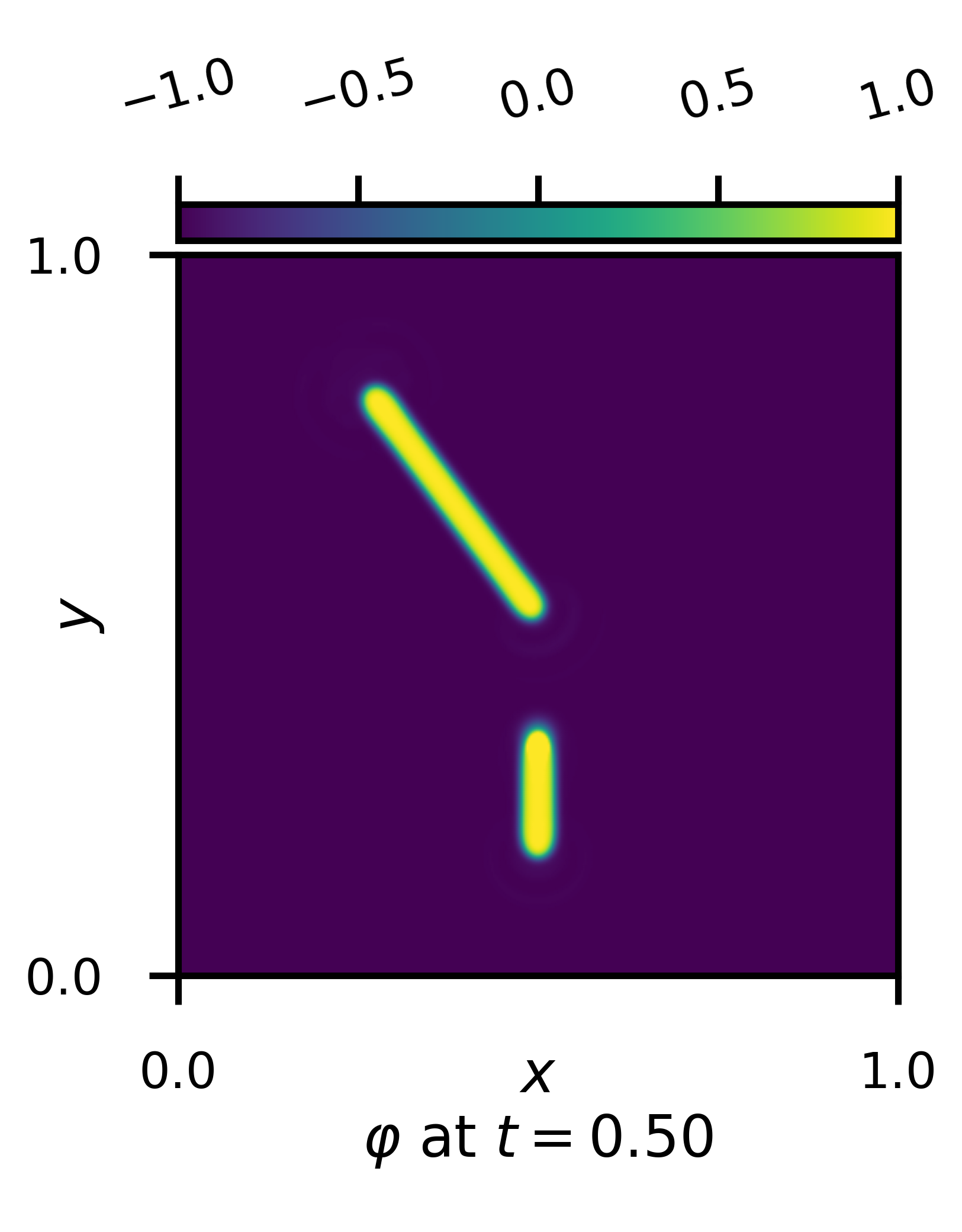}
  \end{subfigure}
  \begin{subfigure}[b]{0.24\textwidth}
    \includegraphics[width=\textwidth]{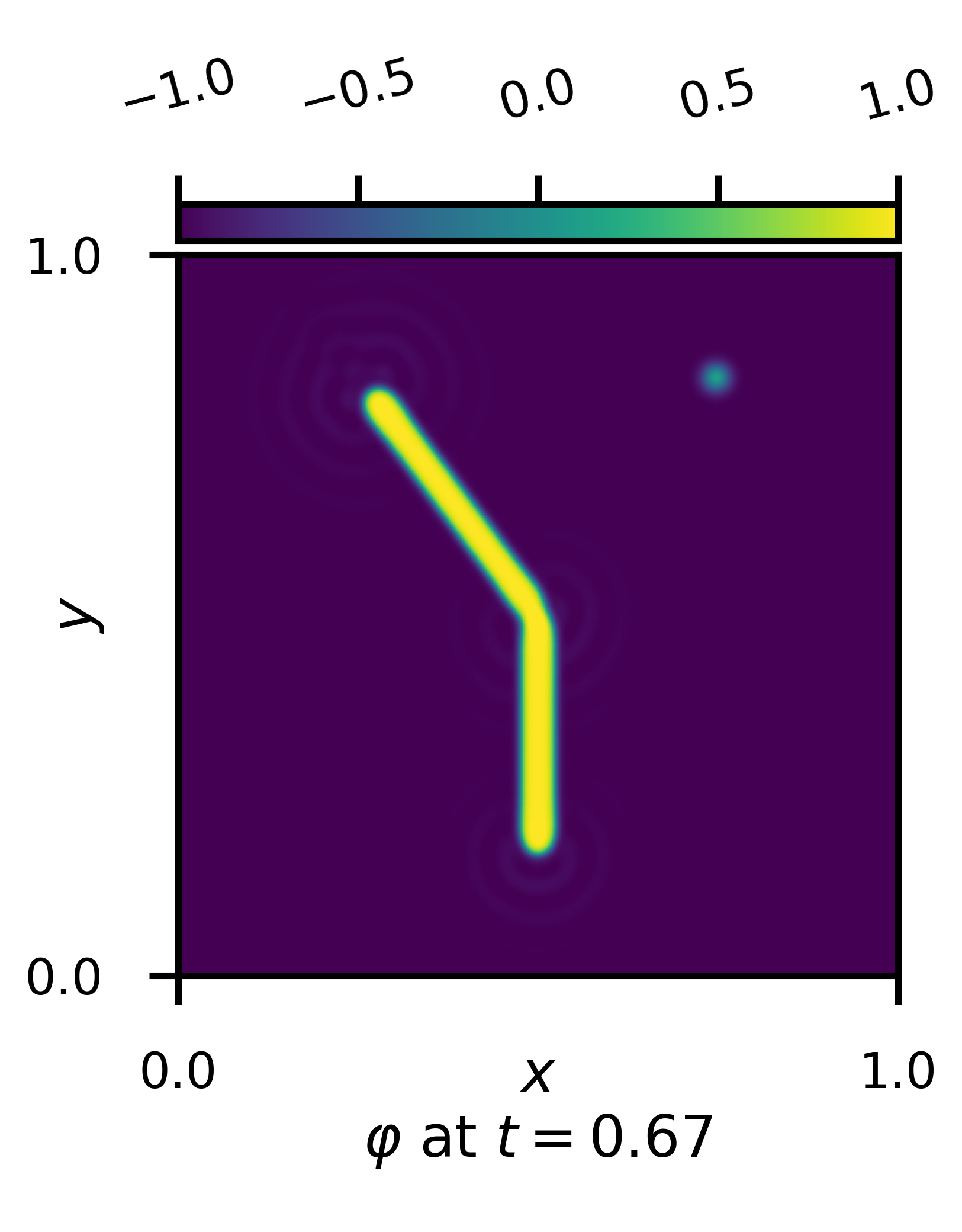}
  \end{subfigure}
  \begin{subfigure}[b]{0.24\textwidth}
    \includegraphics[width=\textwidth]{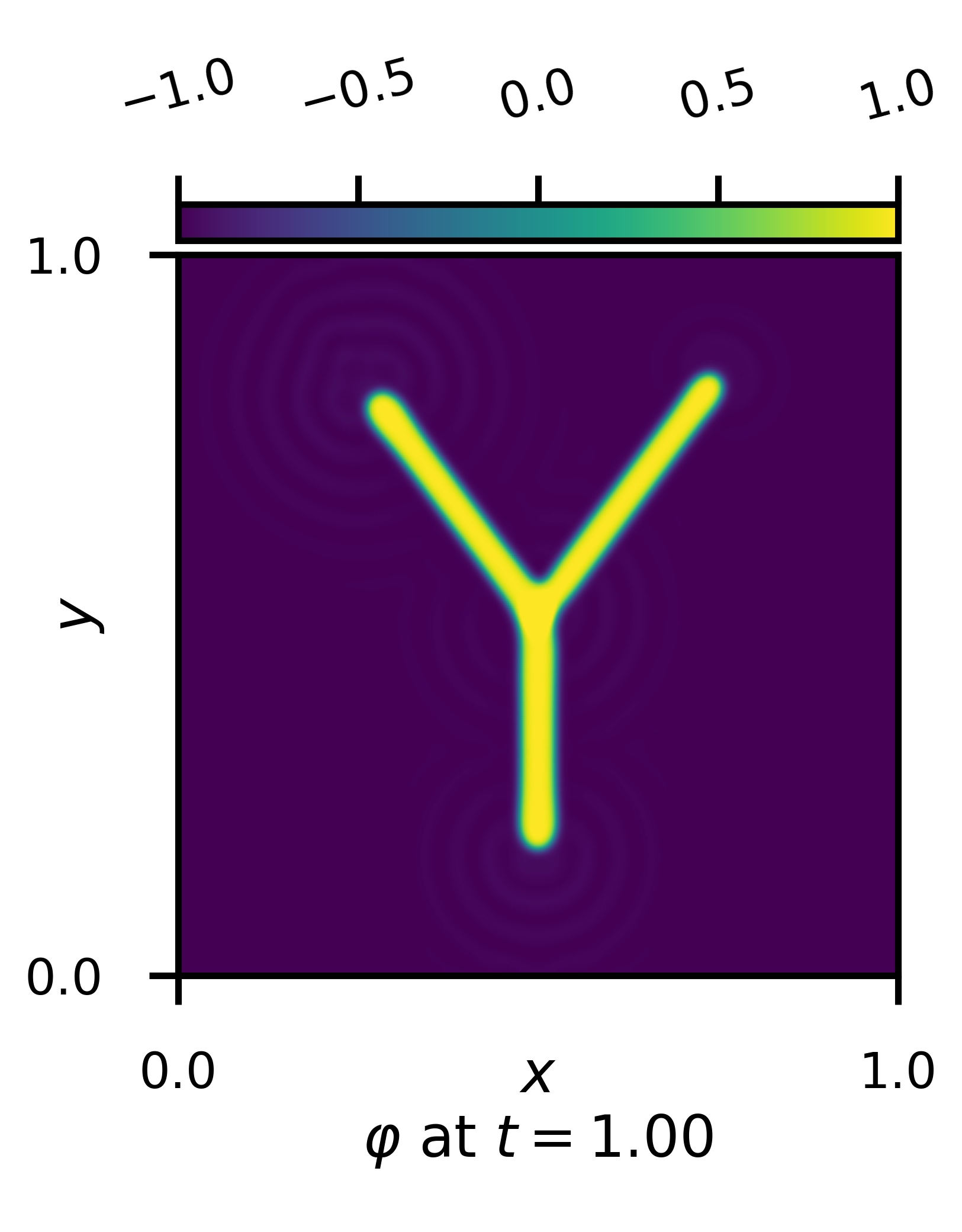}
  \end{subfigure}
  \caption{In the moving source heat source simulation, the four subplots display the phase field $\vp$ at $t=0.34$, $0.50$, $0.67$, and $1.00$.}\label{fig:moving-heat-source-phi}
\end{figure}

\begin{figure}[!ht]
  \centering
  \begin{subfigure}[b]{0.24\textwidth}
    \includegraphics[width=\textwidth]{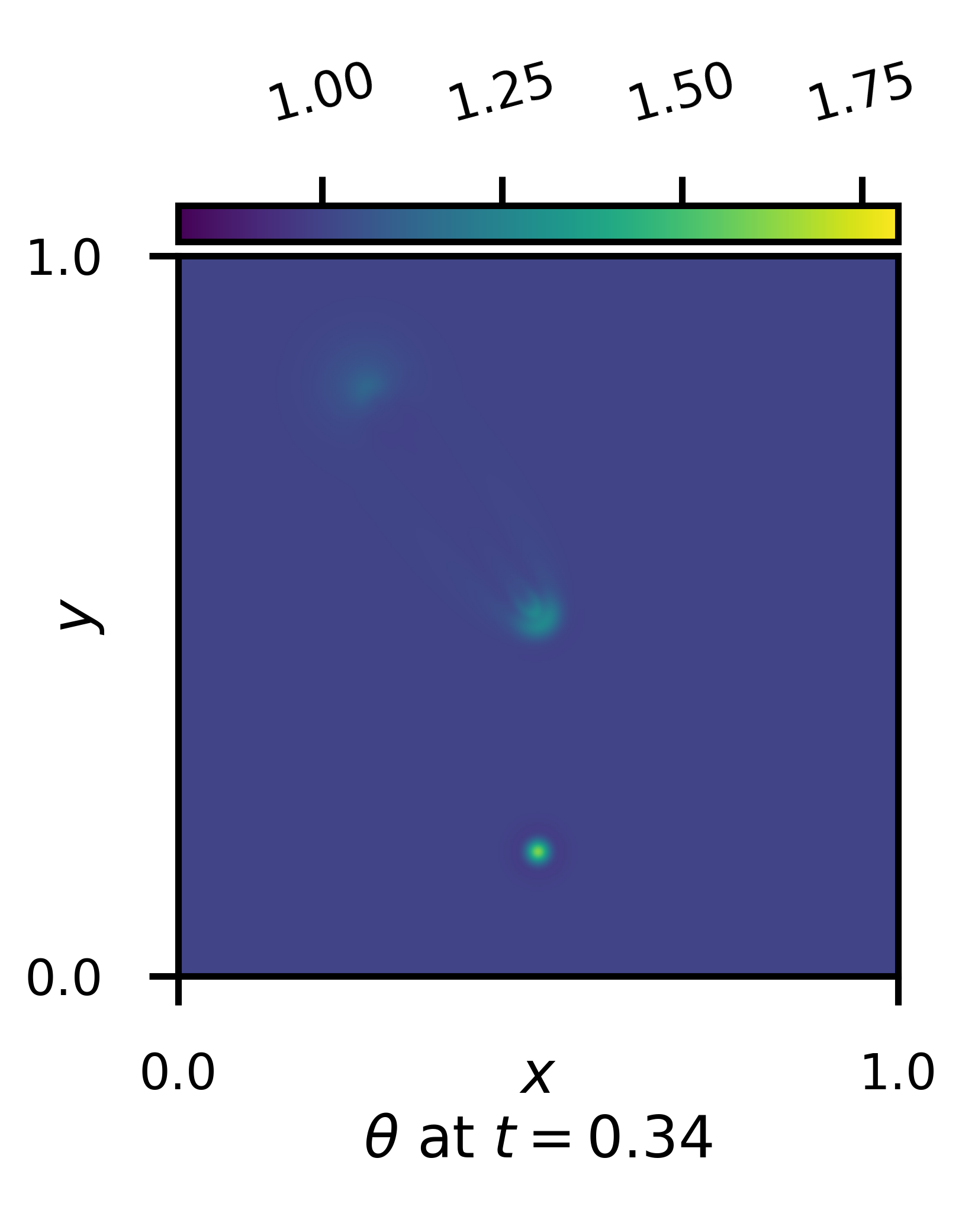}
  \end{subfigure}
  \begin{subfigure}[b]{0.24\textwidth}
    \includegraphics[width=\textwidth]{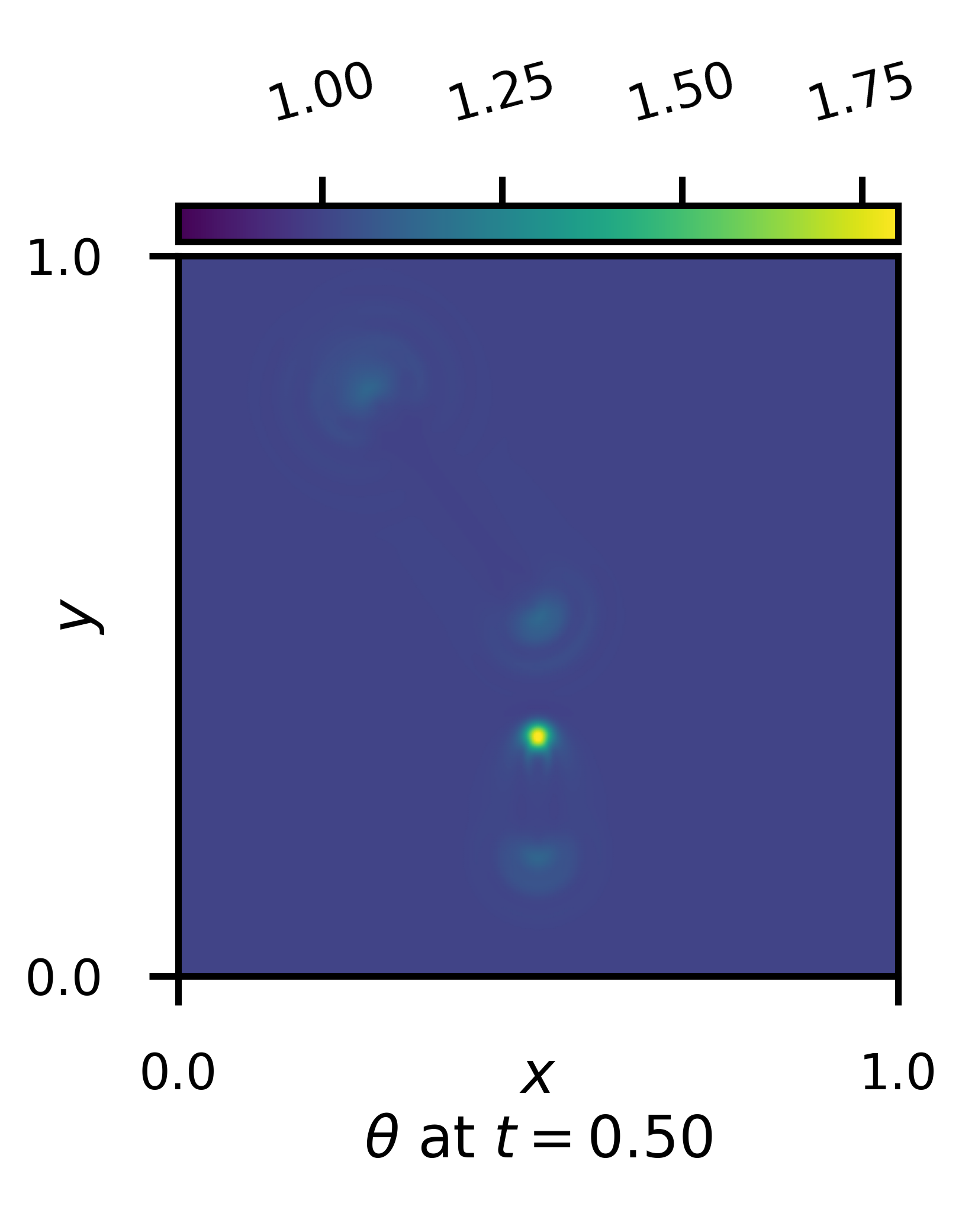}
  \end{subfigure}
  \begin{subfigure}[b]{0.24\textwidth}
    \includegraphics[width=\textwidth]{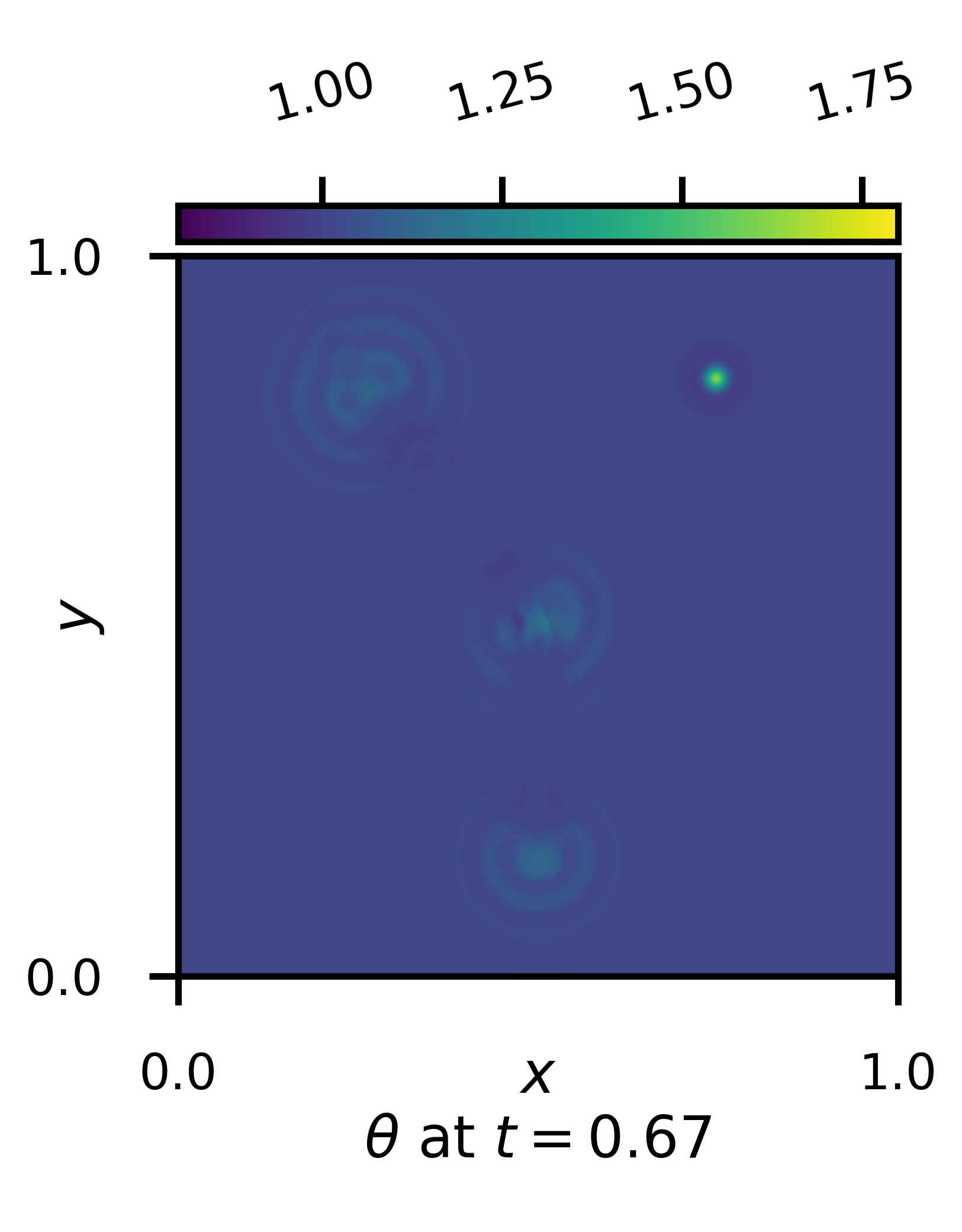}
  \end{subfigure}
  \begin{subfigure}[b]{0.24\textwidth}
    \includegraphics[width=\textwidth]{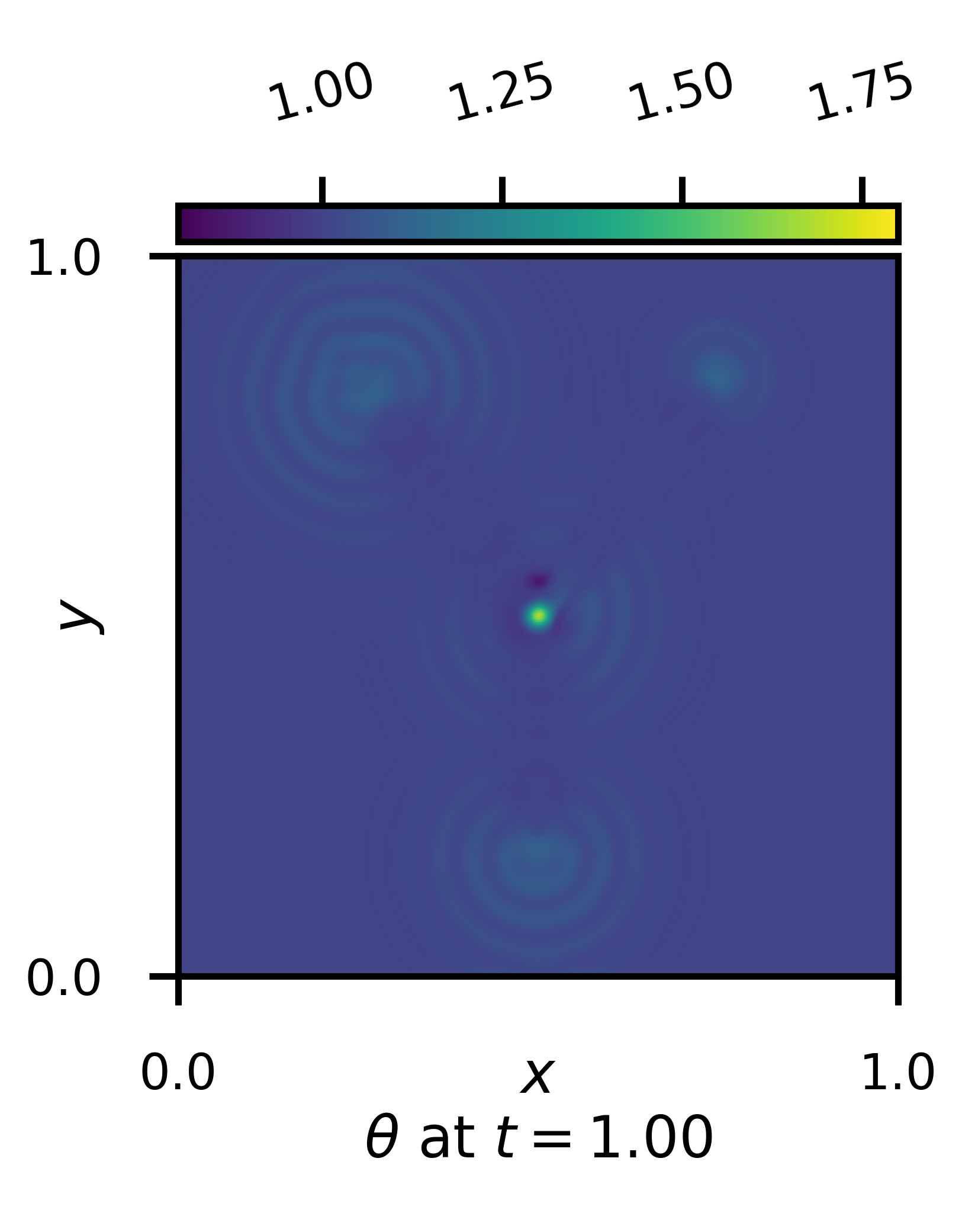}
  \end{subfigure}
  \caption{In the moving source heat source simulation, the four subplots display the temperature field $\theta$ at $t=0.34$, $0.50$, $0.67$, and $1.00$.}\label{fig:moving-heat-source-theta}
\end{figure}

\begin{figure}[!ht]
  \centering
  \begin{subfigure}[b]{0.24\textwidth}
    \includegraphics[width=\textwidth]{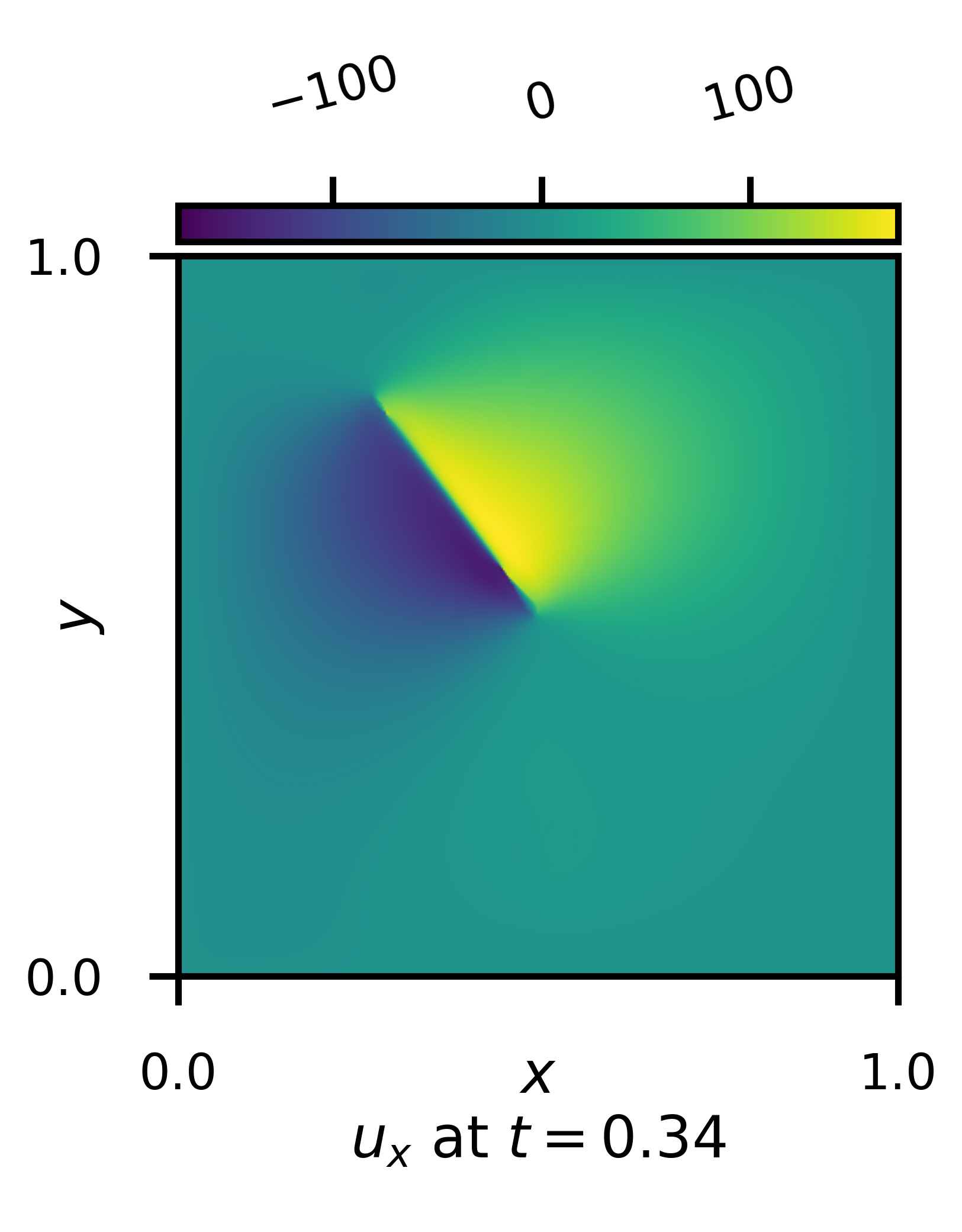}
  \end{subfigure}
  \begin{subfigure}[b]{0.24\textwidth}
    \includegraphics[width=\textwidth]{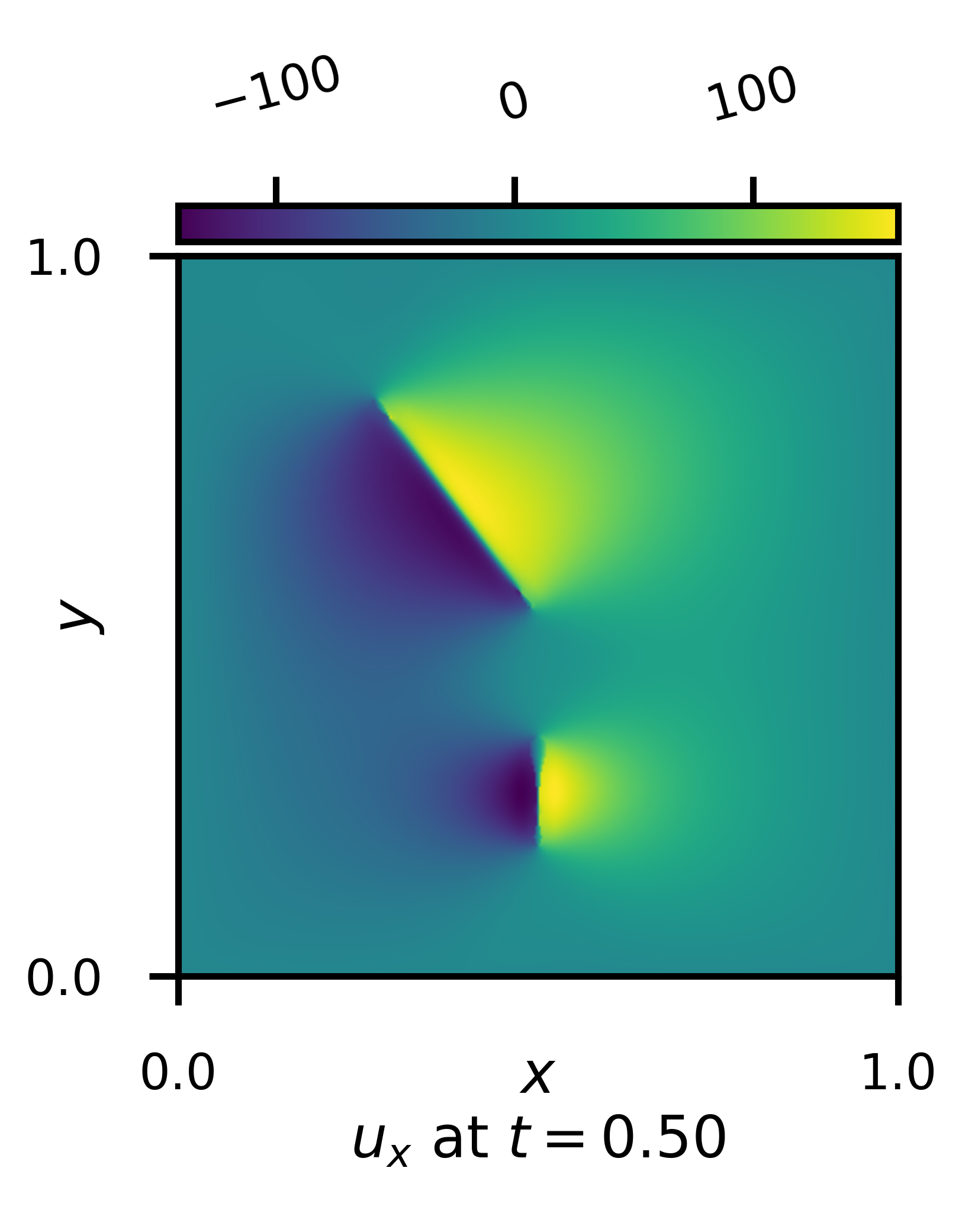}
  \end{subfigure}
  \begin{subfigure}[b]{0.24\textwidth}
    \includegraphics[width=\textwidth]{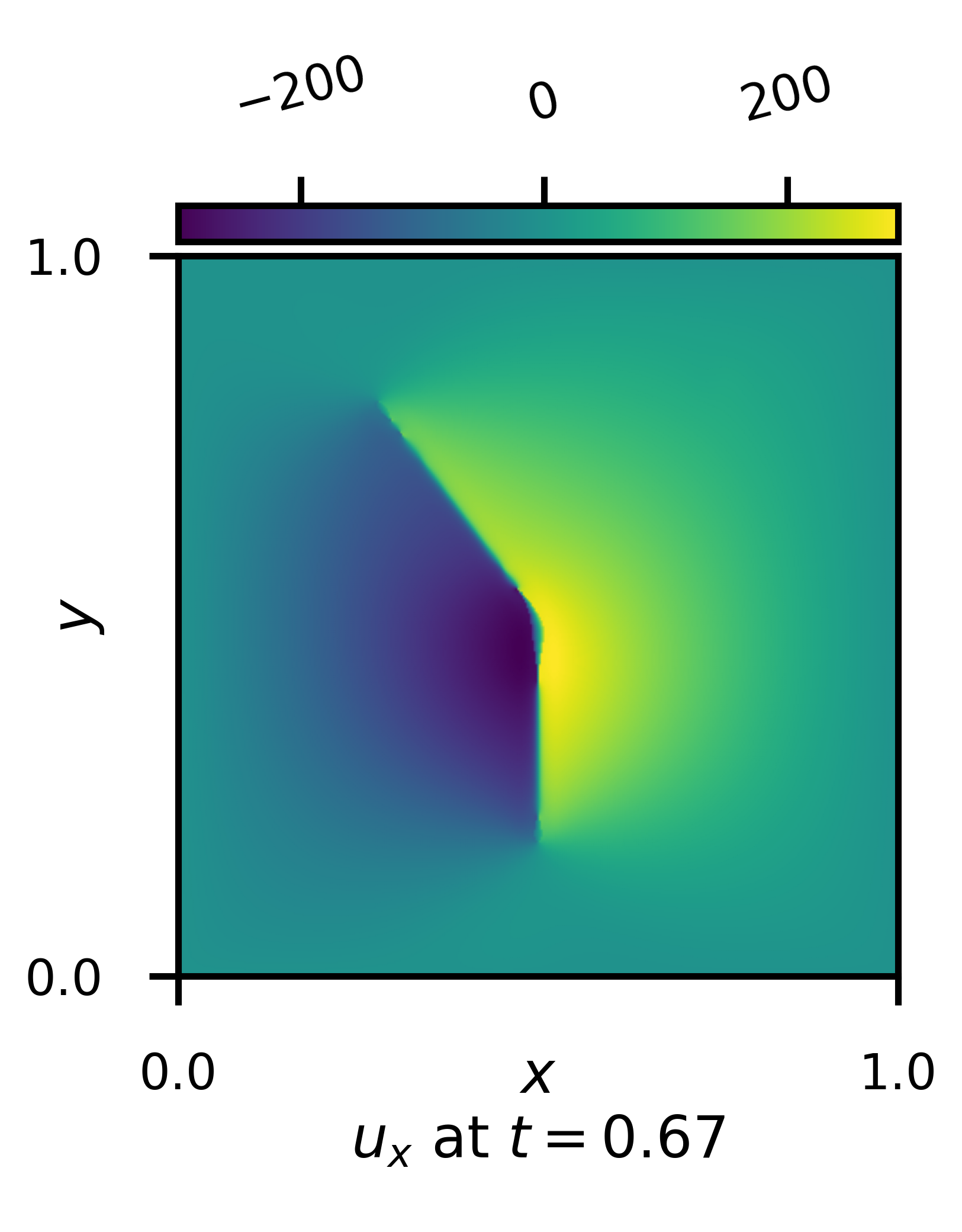}
  \end{subfigure}
  \begin{subfigure}[b]{0.24\textwidth}
    \includegraphics[width=\textwidth]{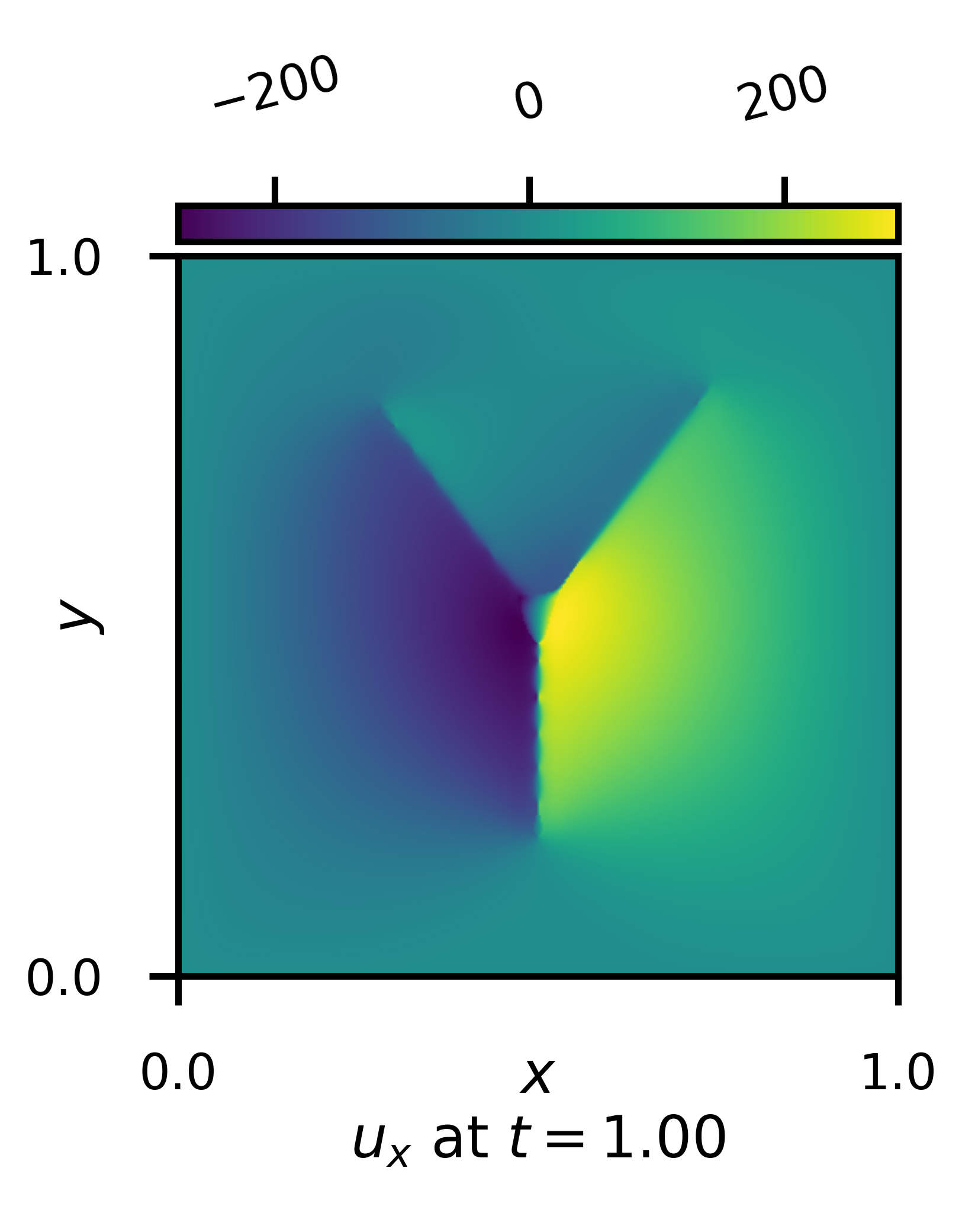}
  \end{subfigure}
  \begin{subfigure}[b]{0.24\textwidth}
    \includegraphics[width=\textwidth]{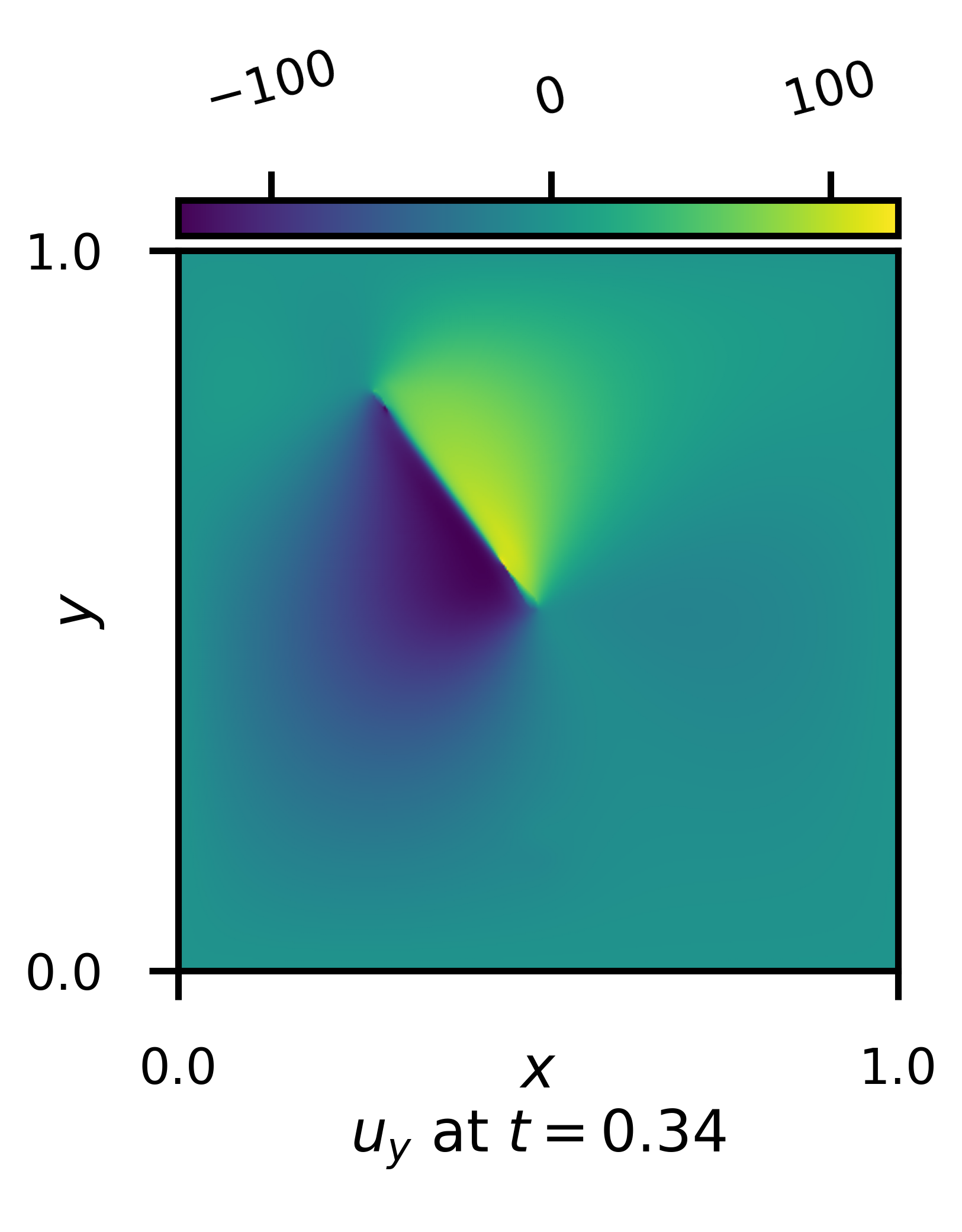}
  \end{subfigure}
  \begin{subfigure}[b]{0.24\textwidth}
    \includegraphics[width=\textwidth]{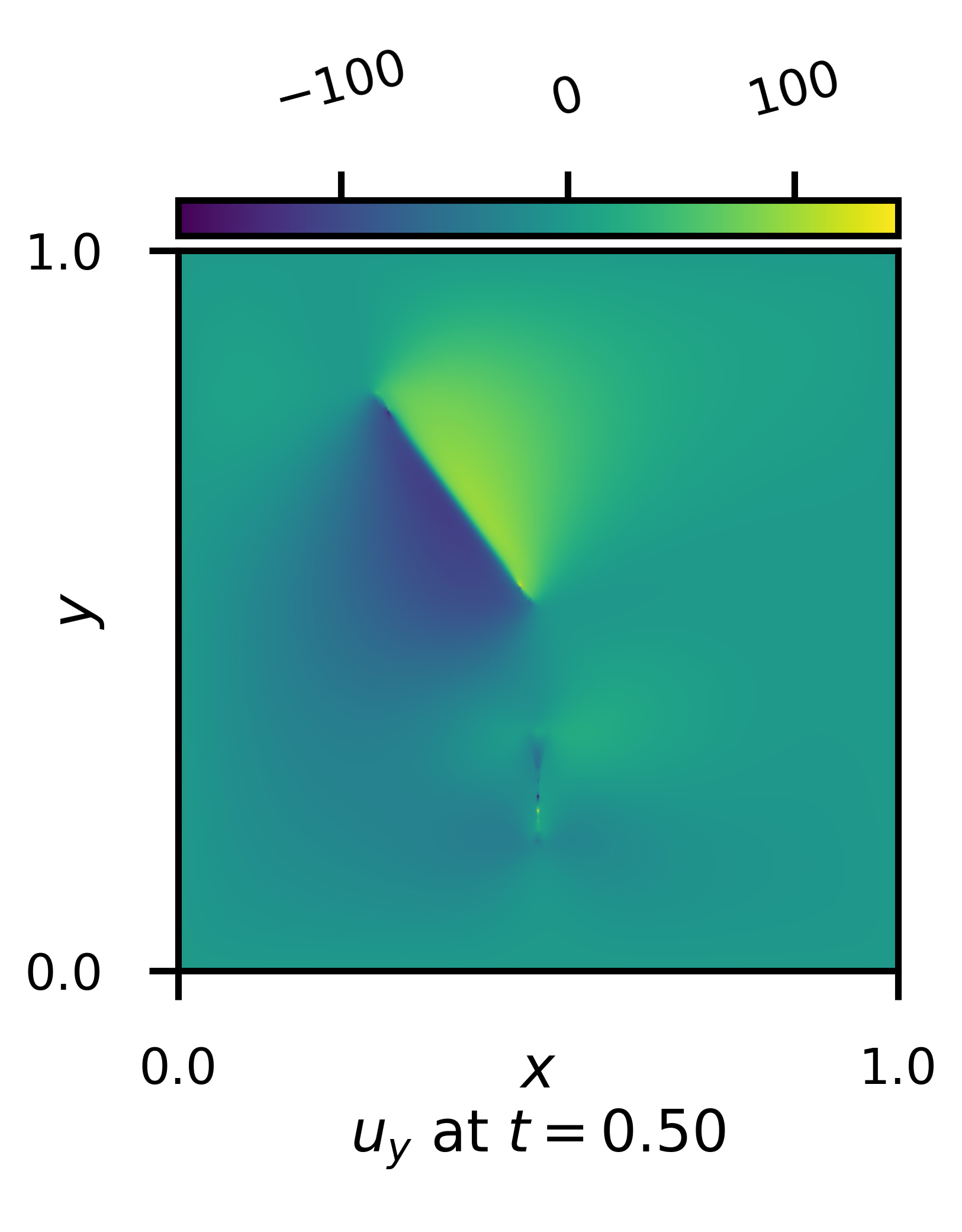}
  \end{subfigure}
  \begin{subfigure}[b]{0.24\textwidth}
    \includegraphics[width=\textwidth]{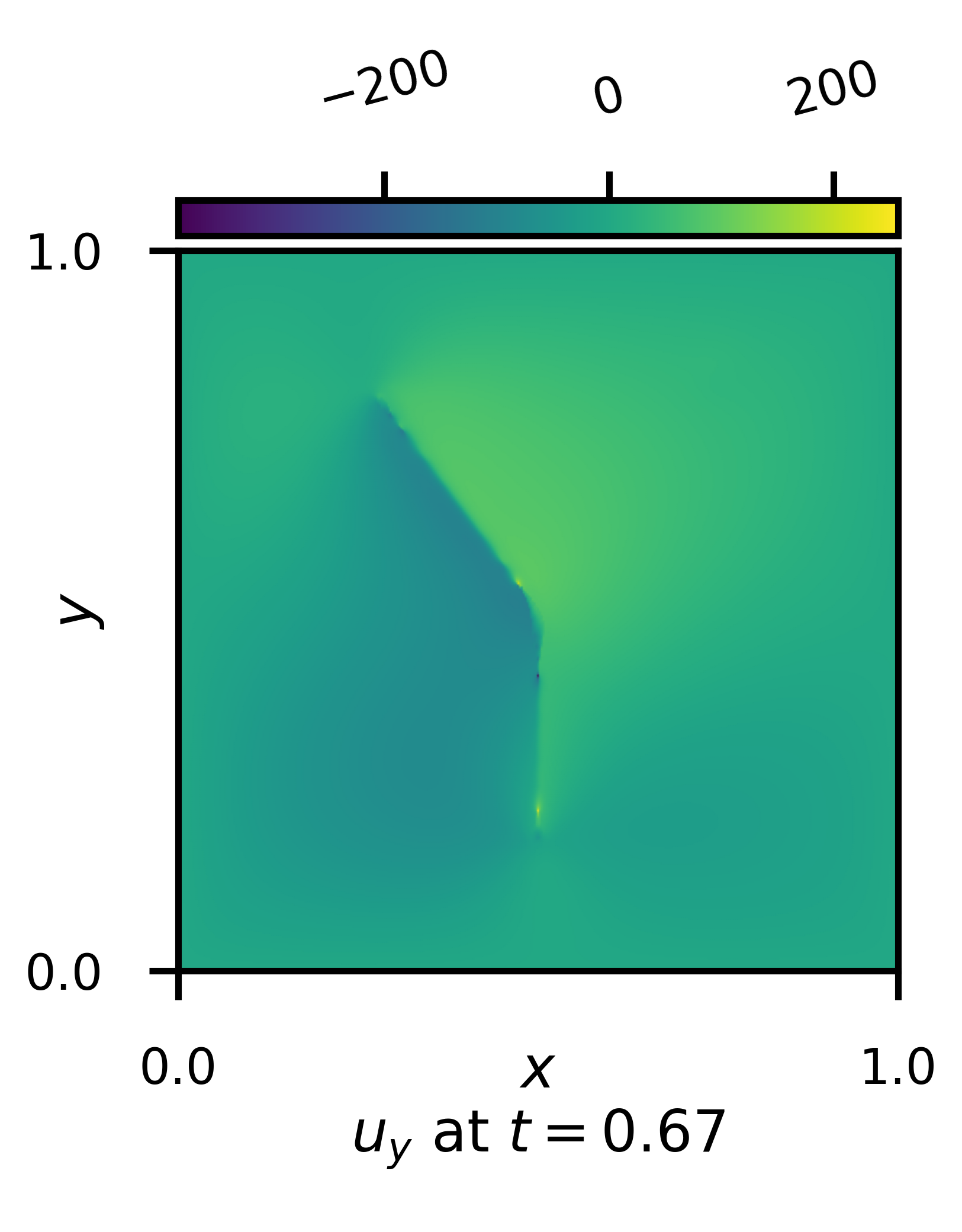}
  \end{subfigure}
  \begin{subfigure}[b]{0.24\textwidth}
    \includegraphics[width=\textwidth]{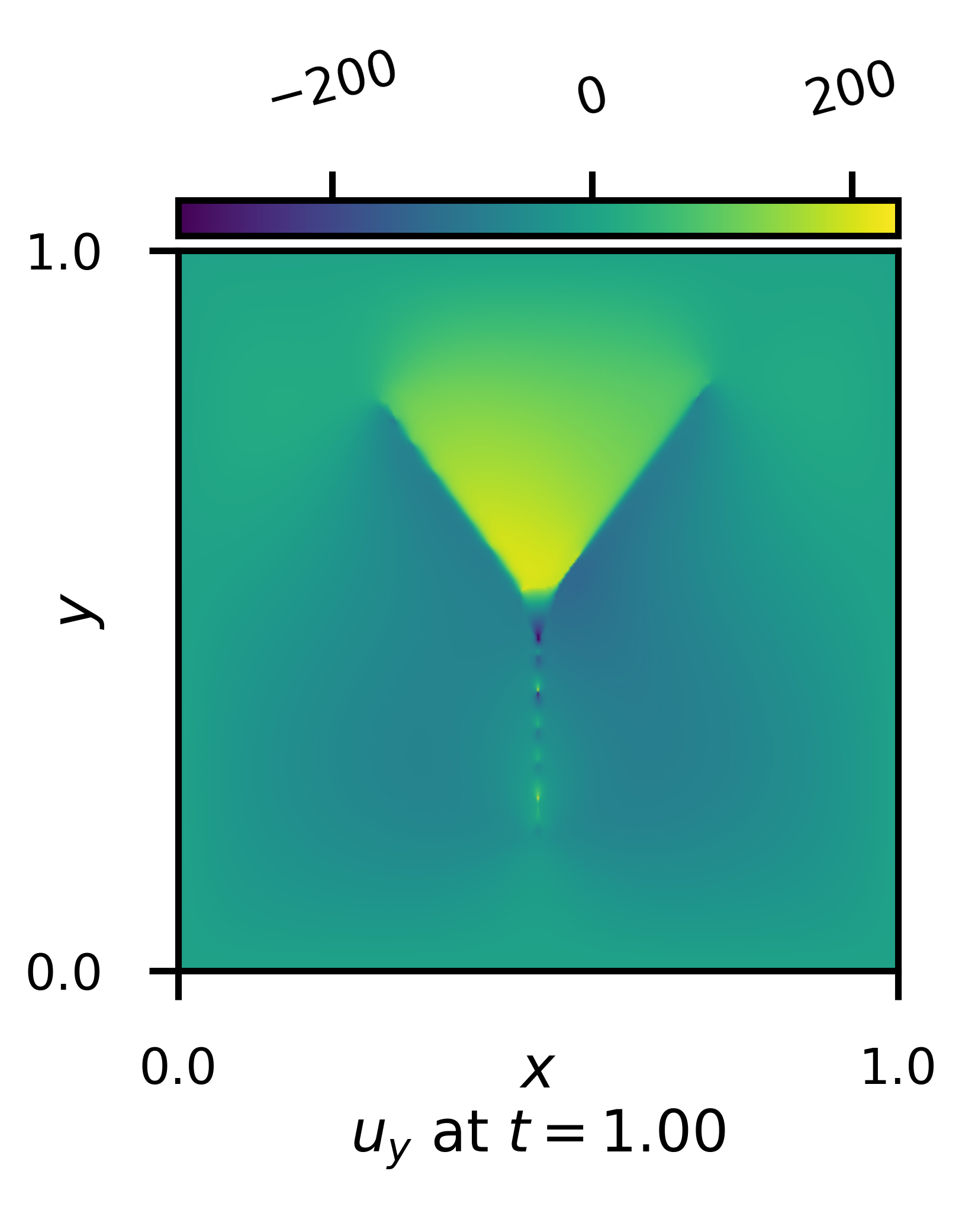}
  \end{subfigure}
  \caption{In the moving source heat source simulation, the four subplots in the first/second row display the displacement field $u_x$/$u_y$ at $t=0.34$, $0.50$, $0.67$, and $1.00$, respectively.}\label{fig:moving-heat-source-u}
\end{figure}

\section{Conclusion}
In this contribution, we proposed a new mathematical model for the physical processes occurring in stereolithography based on a Caginalp phase field system with mechanical effects. A fully discrete unconditionally stable numerical scheme based on the finite element method for spatial discretization and the scalar auxiliary variable approach for temporal discretization is proposed and analyzed. We established existence, uniqueness and convergence of fully discrete solutions, as well as error estimates against the exact solution to the Caginalp submodel. The implementation of the discrete system is efficient as advancing to the next time iteration requires only solving linear systems, and the numerical simulations presented support our theoretical findings, as well as reproducing the expected behaviour during the polymerization processes in stereolithography.

\section*{Acknowledgments}
\noindent KFL gratefully acknowledges the support by the Research Grants Council of the Hong Kong Special Administrative Region, China [Project No.: HKBU 22300522 and HKBU 12302023] and Hong Kong Baptist University [Project No.: RC-OFSGT2/20-21/SCI/006].

\footnotesize
\bibliographystyle{plain}

\begin{thebibliography}{999}
\bibitem{Abdu} O.~Abdulhammed, A.~Al-Ahmari, W.~Ameen and S.H.~Mian, Additive manufacturing: challenges, trends and applications, {\it Adv. Mech. Eng.} {\bf 11} 1--27 (2019)

\bibitem{BBG} J.W.~Barrett, J.F.~Blowey and H.~Garcke, Finite element approximation of the Cahn--Hilliard equation with degenerate mobility, {\it SIAM J. Numer. Anal.} {\bf 37} 286--318  (1999)

\bibitem{BLN} J.W.~Barrett, S.~Langdon and R.~N\"unberg, Finite element approximation of a sixth order nonlinear degenerate parabolic equation, {\it Numer. Math.} {\bf 96} 401--434  (2004)

\bibitem{Barrett} J.W.~Barrett, R.~N\"urnberg and V.~Styles, Finite element approximation of a phase field model for void electromigration, {\it SIAM J. Num. Anal.} {\bf 42} 738--772 (2004)

\bibitem{Bartolo} P.J.~B\'artolo, Stereolithography: Materials, Processes and Applications. Springer, Boston, MA, 2011

\bibitem{Biswas} P.~Biswas, S.~Guessasma and J.~Li, Numerical prediction of orthotropic elastic properties of 3D-printed materials using micro-CT and representative volume element, {\it Acta Mech.} {\bf 231} 503--516 (2020)

\bibitem{Blank} L.~Blank, H.~Garcke, L.~Sarbu, T.~Srisupattarawanit, V.~Styles and A.~Voigt, Phase-field approaches to structural topology optimization, Constrained Optim. Opt. Control for Partial Differ. Eqs., edited by G.~Leugering, S.~Engell, A.~Griewank, M.~Hinze, R.~Rannacher, V.~Schulz, M.~Ulbrich, S.~Ulbrich. In vol. 160, {\it Int. Ser. Numer. Math.} 245--255 (2012) 

\bibitem{Bourdin} B.~Bourdin and A.~Chambolle, Design-dependent loads in topology optimization, {\it ESAIM: Control Optim. Cal. Var.} {\bf 9} 19--48 (2003)

%\bibitem{Brenner} S.C.~Brenner and L.R.~Scott, The Mathematical Theory of Finite Element Methods. Texts in Applied Mathematics, 3rd edition, Springer, New York, 2010.

\bibitem{Brighenti} R.~Brighenti and M.P.~Cosma, Mechanical behavior of photopolymerized materials, {\it J. Mech. Phys. Solids.} {\bf 153} 104456 (2021)

\bibitem{Cag} C.~Caginalp, An analysis of a phase field model of a free boundary, {\it Arch. Rational Mech. Anal.} {\bf 92} 205--245 (1986)

\bibitem{ChenSAV} H.~Chen, J.~Mao and J.~Shen, Optimal error estimates for the scalar auxiliary variable finite-element schemes for gradient flows, {\it Numer. Math.} {\bf 145} 167--196 (2020)

\bibitem{IEQ:diblock} Q.~Cheng, X.~Yang and J.~Shen, Efficient and accurate numerical schemes for a hydro-dynamically coupled phase field diblock copolymer model, {\it J. Comput. Phys.} {\bf 341} 44--60 (2017)

\bibitem{Ciarlet} P.G.~Ciarlet, Linear and Nonlinear Functional Analysis with Applications, Society for Industrial and Applied Mathematics, Philadelphia, 2013.

\bibitem{Classens} K.~Classens, T.~Hafkamp, S.~Westbeek, J.J.C.~Remmers and S.~Weiland, Multiphysical modeling and optimal control of material properties for photopolymerization processes, {\it Addit. Manuf.} {\bf 38} 101520 (2021)

\bibitem{EllStu} C.M.~Elliott and A.M.~Stuart, The global dynamics of discrete semilinear parabolic equations, {\it SIAM J. Numer. Anal.} {\bf 30} 1622--1663 (1993)

\bibitem{Eyre} D.J.~Eyre, Unconditionally gradient stable time marching the Cahn-Hilliard equation, {\it Mater. Res. Soc. Symp. Proc.} {\bf 529} 39--46 (1998)

\bibitem{Feng} X.~Feng, Y.~He and C.~Liu, Analysis of finite element approximations of a phase field model for two-phase fluids, {\it Math. Comput.} {\bf 76} 539--571 (2007)

\bibitem{GarckeTumor} H.~Garcke and D.~Trautwein, Numerical analysis for a Cahn--Hilliard system modelling tumour growth with chemotaxis and active transport, {\it J. Numer. Math.} {\bf 30} 295--324 (2022)

\bibitem{Garcke} H.~Garcke and U.~Weikard, Numerical approximation of the Cahn--Larch\'e equation, {\it Numer. Math.} {\bf 100} 639--662 (2005)

\bibitem{Goodner} M.D.~Goodner and C.N.~Bowman, Development of a comprehensive free radical photopolymerization model incorporating heat and mass transfer effects in thick films, {\it Chem. Engrg. Sci.} {\bf 57}  887--900 (2002)

\bibitem{Grisvard} P.~Grisvard, Elliptic Problems in Nonsmooth Domains, SIAM, Philadelphia, 2011.

\bibitem{IEQ} F.~Guill\'en-Gonz\'alez and G.~Tierra, On linear schemes for a Cahn--Hilliard diffuse interface model, {\it J. Comput. Phys.} {\bf 234} 140--171 (2013)

\bibitem{GSAV:err} F.~Huang and J.~Shen, A new class of implicit-explicit BDFk SAV schemes for general dissipative systems and their error analysis, {\it Comput. Methods Appl. Mech. Engrg.} {\bf 392} 114718 (2022)

\bibitem{Huang} Y.-H.~Huang and C.-Y.~Lin, Measurement of Orthotropic Material Constants and Discussion on 3D Printing Parameters in Additive Manufacturing, {\it Appl. Sci.} {\bf 12} 6812 (2022)

\bibitem{Jacobs} P.F.~Jacobs. Rapid Prototyping \& Manufacturing: Fundamentals of Stereolithography. Dearborn, MI: Society of Manufacturing Engineers, 1992

\bibitem{LamWangSAV} K.F.~Lam and R.~Wang, Stability and convergence of relaxed scalar auxiliary variable schemes for Cahn--Hilliard systems with bounded mass source, Accepted in {\it J. Numer. Math.} (2023) doi:10.1515/jnma-2023-0021.

\bibitem{Lange} J.~Lange, N.~Davidenko, J.~Rieumont and R.~Sastre, Mathematical Modeling of the Bulk Photopolymerization of Furfuryl Acrylate, {\it Marcomol. Theory Simul.} {\bf 13} 641--654  (2004)

\bibitem{CahnLarche} F.C.~Larch\'e and J.W.~Cahn, The effect of self-stress on diffusion in solids, {\it Acta Metall.} {\bf 30} 1835--1845 (1982)

\bibitem{Li:CHS} X.~Li and J.~Shen, On a SAV-MAC scheme for the Cahn--Hilliard--Navier--Stokes phase field model and its error analysis for the corresponding Cahn--Hilliard--Stokes case, {\it Math. Models Methods Appl. Sci.} {\bf 30} 2263--2297 (2020)

\bibitem{MetzgerCH} S.~Metzger, An efficient and convergent finite element scheme for Cahn--Hilliard equations with dynamic boundary conditions, {\it SIAM J. Numer. Anal.} {\bf 59} 219--248 (2021)

\bibitem{MetzgerSAV} S.~Metzger, A convergent SAV scheme for Cahn--Hilliard equations with dynamic boundary conditions, {\it IMA J. Numer. Anal.} {\bf 43} 3593--3627 (2023)

\bibitem{Onuki} A.~Onuki, Ginzburg--Landau approach to elastic effects in phase separation of solids, {\it J. Phys. Soc. Japan} {\bf 58} 3065--3068 (1989)

\bibitem{Perry} M.F.~Perry and G.W.~Young, A Mathematical Model for Photopolymerization From a Stationary Laser Light Source, {\it Marcomol. Theory Simul.} {\bf 14} 26--39 (2005)

\bibitem{Schmocker} A.~Schmocker, A.~Khoushabi, C.~Schizas, P.-E.~Bourban, D.~Pioletti and C.~Moser,J Photopolymerizable hydrogels for implants: Monte-Carlo modeling and experimental in vitro validation, {\it J. Biomed. Opt.} {\bf 19} 035004 (2014)

\bibitem{SAVconv} J.~Shen and J.~Xu, Convergence and error analysis of the scalar auxiliary variable (SAV) schemes to gradient flows, {\it SIAM J. Numer. Anal.} {\bf 56}  2895--2912 (2018)

\bibitem{SAV1} J.~Shen, J.~Xu and J.~Yang, The scalar auxiliary variable (SAV) approach for gradient flows, {\it J. Comput. Phys.} {\bf 353} 407--416 (2018)

\bibitem{SAV2} J.~Shen, J.~Xu and J.~Yang, A new class of efficient and robust energy stable schemes for gradient flows, {\it SIAM Review} {\bf 61} 474--506 (2019)

\bibitem{ShenYang} J.~Shen and X.~Yang, Numerical approximations of Allen-Cahn and Cahn-Hilliard equations, {\it Discrete Contin. Dyn. Syst.} {\bf 28} 1669--1691 (2010)

\bibitem{Simon} J.~Simon, Compact Sets in the Space $L^p(0,T;B)$, {\it Ann. di Mat. Pura ed Appl. (IV)} {\bf 146} 65--96 (1987)

\bibitem{Takezawa} A.~Takezawa, S.~Nishiwaki and M.~Kitamura, Shape and topology optimization based on the phase field method and sensitivity analysis, {\it J. Comput. Phys.} {\bf 229} 2697--2718 (2010)

\bibitem{Thomee} V.~Thom\'ee, Galerkin Finite Element Methods for Parabolic Problems, Springer Series in Computational Mathematics, vol. 25, 2nd edn. Springer, Berlin (2006)

\bibitem{Wang} M.Y.~Wang and S.~Zhou, Multimaterial structural topology optimization with a generalized Cahn--Hilliard model of multiphase transition, {\it Struct. Multi. Optim.} {\bf 33} 89--111 (2007)

\bibitem{Watts} D.C.~Watts, Reaction kinetics and mechanics in photo-polymerised networks, {\it Den. Mater.} {\bf 21} 27--35 (2005)

\bibitem{Westbeek} S.~Westbeek, J.A.W.~van Dommelen, J.J.C.~Remmers and M.G.D.~Geers, Multiphysical modeling of the photopolymerization process for additive manufacturing of ceramics, {\it Eur. J. Mech. A. Solids} {\bf 71} 210--223 (2018)

\bibitem{Zhang} Y.~Zhang and J.~Shen, A generalized SAV approach with relaxation for dissipative systems, {\it J. Comput. Phys.} {\bf 464} 111311 (2022)

\bibitem{Zheng:CHHS} N.~Zheng and X.~Li, Error analysis of the SAV Fourier-spectral method for the Cahn-Hilliard-Hele-Shaw system, {\it Adv. Comput. Math.} {\bf 47} 71 (2021)

\bibitem{Zhou} B.~Zhou, Y.~Heider, S.~Ma and B.~Markert, Phase-field-based modelling of the gelation process of biopolymer droplets in 3D bioprinting, {\it Comput. Mech.} {\bf 63 } 1187--1202 (2018)

\end{thebibliography}

\end{document}